\newcommand{\bel}[1]{\begin{equation*}\label{#1}}
	\newcommand{\be}{\begin{equation}}
		\newcommand{\ba}{\begin{eqnarray}}
			\newcommand{\ea}{\end{eqnarray}}
		\newcommand{\qe}{\end{equation}}
	\newcommand{\R}{{\mathbb R}}
	\newcommand{\Z}{{\mathbb Z}}
	\newcommand{\supp}{{\mathrm{supp}}}
	\newcommand{\eg}{\begin{example}}
		\newcommand{\egd}{\end{example}}
	\newcommand{\tm}{\begin{thm}}
		\newcommand{\tmd}{\end{thm}}
	\newcommand{\co}{\begin{coro}}
		\newcommand{\cod}{\end{coro}}
	\newcommand{\enu}{\begin{enumerate}}
		\newcommand{\enud}{\end{enumerate}}
	\newcommand{\rmk}{\begin{rem}}
		\newcommand{\rmkd}{\end{rem}}
	\theoremstyle{theorem}
	\newtheorem{thm}{Theorem}[section]
	\theoremstyle{example}
	\newtheorem{example}[thm]{Example}
	\newtheorem{coro}[thm]{Corollary}
	\theoremstyle{lemma}
	\newtheorem{lemma}[thm]{Lemma}
	\theoremstyle{definition}
	\newtheorem{defi}[thm]{Definition}
	\theoremstyle{proof}
	\theoremstyle{remark}
	\newtheorem{rem}[thm]{Remark}
	\theoremstyle{remark}
\begin{document}

		\title[Continuum limit of 3D fractional nonlinear Schr\"{o}dinger equation]{Continuum limit of 3D fractional nonlinear Schr\"{o}dinger equation }

		\author{Jiajun Wang}
		\address{Jiajun Wang: School of Mathematical Sciences,
			Fudan University, Shanghai 200433, China.}
		\email{21300180146@m.fudan.edu.cn}
		
		\begin{abstract}
			In this paper, we investigate the continuum limit theory of the fractional nonlinear Schr\"{o}dinger equation in dimension 3. We show that the solution of discrete fractional nonlinear Schr\"{o}dinger equation on $h\mathbb{Z}^{3}$ will converge strongly in $L^{2}$ to the solution of fractional nonlinear Schr\"{o}dinger equation on $\mathbb{R}^{3}$, when $h\to 0$. The key is proving the uniform-in-$h$ Strichartz estimate for discrete fractional nonlinear Schr\"{o}dinger equation, by using the uniform estimate of oscillatory integral and Newton polyhedron techniques. 
		\end{abstract}

		\maketitle
		\numberwithin{equation}{section}
		\section{Introduction}
		    The discrete nonlinear Schr\"{o}dinger equation has gotten more and more attention from both physics and mathematics.
		    
		     In physics,  discrete nonlinear Schr\"{o}dinger equation is served as a model in amorphous materials \cite{20,21} and denaturation of DNA double strand \cite{22,23}. For more information about discrete nonlinear Schr\"{o}dinger equation's applications, we refer to \cite{24,25}. 
		     
		     In mathematics, one often discretizes the PDEs for simulation purpose. Therefore, both for theoretical and practical purpose, proving that the solutions of discrete PDEs will converge to the solutions of their continuous counterparts, as the mesh tends to  $0$, is necessary and important. 
		     
		    Besides,  the topic `` continuum limit of fractional nonlinear Schr\"{o}dinger equation " has garnered great interest from many mathematicians. In \cite{3}, K. Kirkpatrick, E. Lenzmann and G. Staffilani proved the 1D continuum limit with long-range lattice interaction, in a weak convergence sense. In \cite{1}, Y. Hong and C. Yang strengthened the result into strong convergence, by using the Littlewood-Paley theory on lattice. Recently, B. Choi and A. Aceves extended the continuum limit theory to dimension 2 \cite{8}. In this paper, we will further extend the theory to dimension 3.
		     
			We first introduce fractional nonlinear Schr\"{o}dinger equation (FNLS)
		\begin{equation}\label{FNLS}
			\left\{
			\begin{aligned}
				& i\partial_{t} u_{h}(x,t)=(-\Delta_{h})^{\frac{\alpha}{2}} u_{h}(x,t) +\mu |u_{h}|^{p-1}u_{h},  \\
				& u_{h}(x,0) = u_{0,h}(x), \quad (x,t)\in h\Z^d\times \R,
			\end{aligned}
			\right.
		\end{equation}
		and its continuum limit 
		\begin{equation}\label{CFNLS}
			\left\{
			\begin{aligned}
				& i\partial_{t} u(x,t)=(-\Delta)^{\frac{\alpha}{2}} u(x,t) +\mu |u|^{p-1}u,  \\
				& u(x,0) = u_{0}(x), \quad (x,t)\in \mathbb{R}^d\times \R,
			\end{aligned}
			\right.
		\end{equation}
		where $0<h\le 1$, $u_{h}: h\Z^{d}\to \mathbb{C}$, $u: \mathbb{R}^{d}\to \mathbb{C}$, $\alpha\in (0,2]-\lbrace 1\rbrace$, $p>1,\mu=\pm 1$.
		
		Before we state our main result, we shall briefly recall some basic concepts and notations.
		The discrete Laplacian on $h\Z^{d}$ is defined as
		\begin{equation*}
			\Delta_{h}u:=\sum_{i=1}^{d}\dfrac{u(x+he_{i})+u(x-he_{i})-2u(x)}{h^{2}}, \quad x\in h\Z^{d},
		\end{equation*}
		where $u:h\Z^{d}\to \mathbb{C}$ and $\lbrace e_{i}\rbrace_{i=1}^{d}$ is the standard basis in $\Z^{d}$. 
		
		To properly define the fractional discrete Laplacian, we need to use the discrete Fourier transform and its inverse, which are defined as follows
		\begin{equation*}
			(\mathcal{F}_{h}u)(\xi):= h^{d}\sum_{x\in h\Z^{d}}u(x)e^{-ix\cdot\xi}, \quad \xi\in \frac{\mathbb{T}^{d}}{h},
		\end{equation*}
		\begin{equation*}
			(\mathcal{F}_{h}f)^{-1}(x):=\dfrac{1}{(2\pi)^{d}}\int_{\frac{\mathbb{T}^{d}}{h}}f(\xi)e^{ix\cdot\xi}d\xi, \quad x\in h\Z^{d},
		\end{equation*}
		where $\mathbb{T}^{d}=[-\pi,\pi]^{d}$. Then the fractional discrete Laplacian is defined through Fourier multiplier as follows
		\begin{equation*}
			(-\Delta_{h})^{\frac{\alpha}{2}}u:=\mathcal{F}_{h}^{-1}\left\lbrace\left[\sum_{i=1}^{d}\frac{4}{h^{2}}\sin^{2}(\frac{h\xi_{i}}{2})\right]^{\frac{\alpha}{2}}\mathcal{F}_{h}u\right\rbrace.
		\end{equation*}
		We also denote the propagator $U_{h}(t):=e^{-it(-\Delta_{h})^{\frac{\alpha}{2}}}$ and, similarly, $U(t):=e^{-it(-\Delta)^{\frac{\alpha}{2}}}$.
		 
		To set our discussion in a suitable regime, we introduce some important function spaces.
		For $1\le p\le \infty$, the $L_{h}^{p}$-space is defined as
		    \begin{equation*}
		    	L_{h}^{p}:=\left\lbrace u:h\Z^{d}\to \mathbb{C}\bigg| h^{\frac{d}{p}}(\sum_{x\in h\Z^{d}}|u(x)|^{p})^{\frac{1}{p}}<\infty\right\rbrace,
		    \end{equation*}
		    with the $L_{h}^{p}$-norm 
		    \begin{equation*}
		    	\|u\|_{L_{h}^{p}}:= h^{\frac{d}{p}}(\sum_{x\in h\Z^{d}}|u(x)|^{p})^{\frac{1}{p}}.
		    \end{equation*}
	  Following the definition from \cite{1} or \cite{2}, the Sobolev norm on $h\Z^{d}$ is defined as
	  \begin{equation*}
	  	\|u\|_{W_{h}^{s,p}}:=\|\langle \nabla_{h}\rangle^{s}u\|_{L_{h}^{p}}, \quad \langle \nabla_{h}\rangle^{s}u:=\mathcal{F}_{h}^{-1}\left\lbrace\langle \xi\rangle^{s}\mathcal{F}_{h}u\right\rbrace,
	  \end{equation*}
	  where $s\in \mathbb{R}$, $p\in (1,\infty)$ and $\langle \xi\rangle:=\left(1+|\xi|^{2}\right)^{\frac{1}{2}}$. Besides, we conventionally denote $H_{h}^{s}:=W_{h}^{s,2}$.
	  
	  Next, we introduce some necessary operators that connect the discrete PDEs and their continuum limit.
	  
	  The discretization operator $d_{h}:L^{2}(\mathbb{R}^{d})\to L_{h}^{2}$ is defined as 
	  \begin{equation*}
	  	d_{h}f(x)=\frac{1}{h^{d}}\int_{x+[0,h)^{d}}f(y)dy, \quad x\in h\Z^{d}.
	  \end{equation*}
	  The linear interpolation operator $p_{h}:L_{h}^{2}\to L^{2}(\mathbb{R}^{d})$ is defined as
	  \begin{equation*}
	  	p_{h}u(y)=u(x)+\sum_{i=1}^{d}\dfrac{u(x+he_{i})-u(x)}{h}(y_{i}-x_{i}), \quad y\in x+[0,h)^{d}, \; x\in h\Z^{d},
	  \end{equation*}
	  where $y=(y_{1},\cdots, y_{d})$, $x=(x_{1},\cdots, x_{d})$.
	  
	  Now we state our main theorem as follows,
	  \begin{thm}\label{main}
	  	For $3\le p<5$, $\frac{3(p-1)}{p+1}<\alpha<2$, $u_{0}\in H^{\frac{\alpha}{2}}(\mathbb{R}^{3})$ and $u_{0,h}=d_{h}u_{0}$, we have $u\in C([0,T];H^{\frac{\alpha}{2}}(\mathbb{R}^{3}))$ and $u_{h}\in C([0,T];L_{h}^{2})$ are the solutions of equations (\ref{CFNLS}) and (\ref{FNLS}), for some $T=T(\|u_{0}\|_{H^{\frac{\alpha}{2}}(\mathbb{R}^{3})})>0$. Then we have
        the	following strong convergence
        \begin{equation*}
	  		\|p_{h}u_{h}(t)-u(t)\|_{L^{2}(\mathbb{R}^{d})}\le C_{1}h^{\frac{\alpha}{2+\alpha}}e^{C_{2}t}, \quad t\in [0,T],
	  	\end{equation*}
	  	where $C_{1}, C_{2}$ is positive constants that only depend on $\|u_{0}\|_{H^{\frac{\alpha}{2}}(\mathbb{R}^{3})}$.
	  \end{thm}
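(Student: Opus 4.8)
The plan is to follow the now-standard strategy for continuum limits of discrete dispersive equations, the new ingredient being the uniform-in-$h$ Strichartz estimates established above, which substitute for the dispersive decay available only in lower dimensions. \emph{Step 1 (uniform local theory).} Since $d_h$ is bounded from $H^{\alpha/2}(\R^3)$ to $H_h^{\alpha/2}$ uniformly in $h$, and since the hypothesis $\alpha>\tfrac{3(p-1)}{p+1}$ is exactly the $H^{\alpha/2}$-subcriticality condition $s_c:=\tfrac32-\tfrac{\alpha}{p-1}<\tfrac{\alpha}{2}$, the uniform Strichartz estimates plus a contraction argument in the associated continuous and discrete Strichartz spaces $S,S_h$ produce a time $T=T(M)>0$, $M:=\|u_0\|_{H^{\alpha/2}(\R^3)}$, on which both solutions exist and obey $\|u\|_{C_tH_x^{\alpha/2}\cap S([0,T])}+\|u_h\|_{C_tH_h^{\alpha/2}\cap S_h([0,T])}\le C(M)$; in particular $u_h$ is more regular than the statement requires.

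\emph{Step 2 (linear comparison).} The heart of the matter is the estimate
\[
\|U_h(t)d_hg-d_hU(t)g\|_{L_h^2}\lesssim(1+|t|)\,h^{\frac{\alpha}{2+\alpha}}\|g\|_{H^{\alpha/2}(\R^3)}.
\]
Set $N:=h^{-\frac{2}{2+\alpha}}$, so $Nh=h^{\frac{\alpha}{2+\alpha}}\to0$, and split $g=P_{\le N}g+P_{>N}g$ via a Littlewood--Paley projection. For $P_{>N}g$, using that $d_h$ is an $L^2\to L_h^2$ contraction and $U_h,U$ are isometries, the contribution is $\lesssim\|P_{>N}g\|_{L^2}\le N^{-\alpha/2}\|g\|_{\dot H^{\alpha/2}}=h^{\frac{\alpha}{2+\alpha}}\|g\|_{\dot H^{\alpha/2}}$. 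For $P_{\le N}g$ the Fourier support lies in $(-\pi/h,\pi/h)^3$, so no aliasing occurs: $\mathcal F_h(d_hP_{\le N}g)(\xi)=m_0(h\xi)\widehat{P_{\le N}g}(\xi)$ with $|m_0|\le1$, and the difference has discrete symbol $\big(e^{-itm_h(\xi)^\alpha}-e^{-it|\xi|^\alpha}\big)m_0(h\xi)$ where $m_h(\xi)^2=\sum_i\tfrac4{h^2}\sin^2\tfrac{h\xi_i}2$. From $\big||\xi|^2-m_h(\xi)^2\big|\lesssim h^2|\xi|^4$ and the mean value theorem, $\big|m_h(\xi)^\alpha-|\xi|^\alpha\big|\lesssim h^2|\xi|^{\alpha+2}$ on $|\xi|\le\pi/h$; hence, writing $|\xi|^{2\alpha+4}=|\xi|^{\alpha+4}|\xi|^\alpha\le N^{\alpha+4}|\xi|^\alpha$ on the support and using $|e^{ia}-e^{ib}|\le|a-b|$,
\[
\|U_h(t)d_hP_{\le N}g-d_hU(t)P_{\le N}g\|_{L_h^2}\lesssim|t|\,h^2N^{\frac{\alpha+4}2}\|g\|_{\dot H^{\alpha/2}}=|t|\,h^{\frac{\alpha}{2+\alpha}}\|g\|_{\dot H^{\alpha/2}}.
\]
The choice $N=h^{-2/(2+\alpha)}$ is precisely the one balancing the truncation error $N^{-\alpha/2}$ against the symbol error $h^2N^{(\alpha+4)/2}$, and this is where the exponent $\tfrac{\alpha}{2+\alpha}$ is born. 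Prefixing $p_h$ and using $\|p_h\|_{L_h^2\to L^2}\lesssim1$ together with the interpolated Bramble--Hilbert bound $\|p_hd_hg-g\|_{L^2}\lesssim h^{\alpha/2}\|g\|_{H^{\alpha/2}}$ also gives $\|p_hU_h(t)d_hg-U(t)g\|_{L^2}\lesssim(1+|t|)h^{\frac{\alpha}{2+\alpha}}\|g\|_{H^{\alpha/2}}$.

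\emph{Step 3 (error equation).} Let $v_h:=u_h-d_hu$, so $v_h(0)=u_{0,h}-d_hu_0=0$. Applying $d_h$ to Duhamel's formula for $u$ and subtracting Duhamel's formula for $u_h$ gives, with $F(z):=|z|^{p-1}z$,
\begin{align*}
v_h(t)={}&\big(U_h(t)d_hu_0-d_hU(t)u_0\big)-i\mu\int_0^tU_h(t-s)\big[F(u_h)-F(d_hu)\big](s)\,ds\\
&-i\mu\int_0^t\big[U_h(t-s)d_h-d_hU(t-s)\big]F(u)(s)\,ds-i\mu\int_0^tU_h(t-s)\big[F(d_hu)-d_hF(u)\big](s)\,ds.
\end{align*}
Term (A) is $\lesssim(1+t)h^{\frac{\alpha}{2+\alpha}}M$ by Step 2. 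Term (B): the dual discrete Strichartz inequality and $|F(a)-F(b)|\lesssim(|a|^{p-1}+|b|^{p-1})|a-b|$ give, on any $I\subset[0,T]$, $\|(\mathrm B)\|_{S_h(I)}\lesssim\big(\|u_h\|_{S_h(I)}^{p-1}+\|d_hu\|_{S_h(I)}^{p-1}\big)\|v_h\|_{S_h(I)}$. Term (C): splitting $F(u)$ into low and high frequencies as in Step 2 — now measuring $F(u)$ in the dual Strichartz space carrying $\tfrac\alpha2$ derivatives, which is licit by the fractional Leibniz rule since $3\le p<5$, so that $\|\langle\nabla\rangle^{\alpha/2}F(u)\|_{\text{(dual Strichartz)}}\lesssim\|u\|_S^p$ — the same symbol- and aliasing-analysis yields $\|(\mathrm C)\|_{S_h([0,t])}\lesssim(1+t)h^{\frac{\alpha}{2+\alpha}}\|u\|_{S([0,t])}^p$; this detour through Strichartz spaces is forced because $F(u)\notin L^2(\R^3)$ in the relevant range of $(p,\alpha)$. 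Term (D) is a Jensen-gap estimate: pointwise $|d_hF(u)(x)-F(d_hu)(x)|\lesssim h^{-3}\int_{x+[0,h)^3}\big(|u(y)|^{p-1}+|d_hu(x)|^{p-1}\big)|u(y)-d_hu(x)|\,dy$, and a fractional Poincar\'e inequality on the cell together with H\"older gives $\|(\mathrm D)\|_{S_h([0,t])}\lesssim h^{\alpha/2}\|u\|_{S([0,t])}^p$.

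\emph{Step 4 (iteration and conclusion).} Partition $[0,T]$ into $n=n(M)$ intervals $I_j=[t_{j-1},t_j]$ on which $\|u_h\|_{S_h(I_j)}+\|d_hu\|_{S_h(I_j)}<\varepsilon_0$ for a small absolute $\varepsilon_0$ (possible since the total norms are $\le C(M)$ and Strichartz norms are absolutely continuous in $t$). On each $I_j$, Strichartz combined with Steps 2--3 gives $\|v_h\|_{S_h(I_j)}\le C\|v_h(t_{j-1})\|_{L_h^2}+C\varepsilon_0^{p-1}\|v_h\|_{S_h(I_j)}+C(M)h^{\frac{\alpha}{2+\alpha}}$; absorbing the middle term, $\|v_h(t_j)\|_{L_h^2}\le2C\|v_h(t_{j-1})\|_{L_h^2}+C(M)h^{\frac{\alpha}{2+\alpha}}$. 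Iterating from $v_h(0)=0$ yields $\|v_h(t)\|_{L_h^2}\le C(M)h^{\frac{\alpha}{2+\alpha}}e^{C_2t}$ on $[0,T]$, with $C_2=C_2(M)$ arising from the geometric factor $(2C)^{t/|I_j|}$. Finally, $\|p_hu_h(t)-u(t)\|_{L^2}\le\|p_hv_h(t)\|_{L^2}+\|p_hd_hu(t)-u(t)\|_{L^2}\lesssim\|v_h(t)\|_{L_h^2}+h^{\alpha/2}\|u(t)\|_{H^{\alpha/2}}\le C_1h^{\frac{\alpha}{2+\alpha}}e^{C_2t}$, using $\tfrac\alpha2\ge\tfrac{\alpha}{2+\alpha}$ and $\|u(t)\|_{H^{\alpha/2}}\le C(M)$ on $[0,T]$. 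I expect the main obstacle to be Step 2 (and its reappearance inside Step 3(C)): one must simultaneously control the dispersion-symbol error $\big(\sum_i\tfrac4{h^2}\sin^2\tfrac{h\xi_i}2\big)^{\alpha/2}-|\xi|^\alpha$, absorb the aliasing introduced by $d_h$, and spend exactly one power of $\dot H^{\alpha/2}$ regularity so that the two competing errors balance at the cutoff $N\sim h^{-2/(2+\alpha)}$, producing the sharp rate $h^{\alpha/(2+\alpha)}$.
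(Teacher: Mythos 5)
Your proposal is correct in spirit and follows the same overall strategy as the paper: establish uniform-in-$h$ Strichartz estimates, obtain a linear-propagator comparison at rate $h^{\alpha/(2+\alpha)}$ by a frequency-splitting at $N\sim h^{-2/(2+\alpha)}$, expand via Duhamel, estimate the nonlinear terms, and close with a Gronwall-type argument. The two differ, however, in the choice of intermediate comparison and in the closing step, and these differences are worth recording. The paper directly compares $p_hu_h(t)$ with $u(t)$ in $L^2(\R^3)$ and splits the difference into (I) the linear part $p_hU_h(t)u_{0,h}-U(t)u_0$, (II) the commutator $[p_hU_h-Up_h]$ applied to the \emph{discrete} nonlinearity $F(u_h)$, (III) the interpolation gap $p_hF(u_h)-F(p_hu_h)$, and (IV) a Lipschitz difference of nonlinearities; it then closes with classical Gronwall using that $\|u_h\|_{L_h^\infty}+\|u\|_{L^\infty}\in L^{q_0/(p-1)}_t\subset L^1_t$. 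You instead compare $v_h:=u_h-d_hu$ in $L_h^2$, placing the commutator $[U_h d_h - d_h U]$ in front of the \emph{continuum} nonlinearity $F(u)$ (your term (C)) and the Jensen-gap $F(d_hu)-d_hF(u)$ in (D), and only at the end convert back via $\|p_h d_h u - u\|_{L^2}\lesssim h^{\alpha/2}\|u\|_{H^{\alpha/2}}$; you also close with an interval-iteration instead of Gronwall. Both are legitimate and in common use; the lattice-side comparison has the advantage that $v_h(0)=0$ exactly and all Strichartz estimates live on the lattice where the uniform bounds are proved, at the (minor) cost that (C) forces you to spend $\alpha/2$ derivatives on $F(u)$ via fractional Leibniz -- a point the paper circumvents by routing the commutator through $F(u_h)$ and citing the corresponding estimates from Hong--Yang. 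Your Step 2 is more explicit than what the paper writes (the paper delegates the details to its reference [1] and to its Lemma 2.2(IV)), whereas Step 3(C)--(D) is sketched at a level of rigor below the paper's cited sources -- in particular, the "same symbol- and aliasing-analysis" under the time integral in (C), and the fractional Poincaré estimate behind (D), would each need to be carried out; they are essentially the content of Lemma 2.2(IV)--(V) and the corresponding nonlinear estimates in [1]. Finally, your iteration yields the same $e^{C_2t}$ profile as Gronwall once you observe $n\simeq t/|I_j|$ with $|I_j|$ depending only on $M$, so the conclusions match.
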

	  \begin{rem}
	  	The range of $\alpha$ is energy-subcritical. To be more specific, \cite{3} shows that the above well-posed solutions conserve the energy below
	  	\begin{equation*}
	  		E[u(t)]:=\frac{1}{2}\int_{\mathbb{R}^{d}}\Big||\nabla|^{\frac{\alpha}{2}}u\Big|^{2}dx+\frac{\mu}{p+1}\int_{\mathbb{R}^{2}}|u|^{p+1}dx,
	  	\end{equation*} 
	  	\begin{equation*}
	  		E_{h}[u_{h}(t)]=\frac{1}{2}\|(-\Delta_{h})^{\frac{\alpha}{2}}u_{h}(t)\|_{L_{h}^{2}}^{2}+\frac{\mu}{p+1}\|u_{h}(t)\|_{L_{h}^{p+1}}^{p+1}.
	  	\end{equation*}
	  	On the other hand, direct scaling argument shows that the Sobolev-critical regularity is 
	  	\begin{equation*}
	  		s_{c}:=\frac{d}{2}-\frac{\alpha}{p-1},
	  	\end{equation*}
	  	and the above range ``$\frac{3(p-1)}{p+1}<\alpha$" comes from ``$\frac{\alpha}{2}>s_{c}$", which is called energy-subcritical.
	  \end{rem}
	  \begin{rem}
	  	From the proof of Theorem \ref{main}, we will see explicit expressions of $T,C_{1}, C_{2}$. For example, we have, for some suitable $q_{0}>p-1$,
	  	\begin{equation*}
	  		T\simeq \|u_{0}\|_{H^{\frac{\alpha}{2}}}^{-\frac{1}{\frac{1}{p-1}-\frac{1}{q_{0}}}}.
	  	\end{equation*}
	  \end{rem}
	  
	  We organize this paper as follows. In section 2, we introduce some important lemmas, especially Lemma \ref{3.4}, and prove the desired Theorem \ref{main}. In section 3, we complete the proof of Lemma \ref{3.4}. In the Appendix, we will briefly recall the uniform estimate of oscillatory integral and related Newton polyhedron method.
	  
	  	\noindent
	  \textbf{Notation.}
	  \begin{itemize}
	  	\item By $u\in C^{k}([0,T]; B)( or \; L^{p}([0,T];B))$ for a Banach space $B,$ we mean $u$ is a $C^{k}(or \; L^{p})$ map from $[0,T]$ to $B;$ see Chapter 5 in \cite{10}.
	  	\item By $A\lesssim B$ (resp. $A\simeq B$), we mean there is a positive constant $C$, such that $A\le CB$ (resp. $C^{-1}B\le A \le C B$). If the constant $C$ depends on $p,$ then we write $A\lesssim_{p}B$ (resp. $A\simeq_{p} B$). 
	  	\item By $A\approx B$, we mean $|A-B|\ll1$.
	  	\item By $\#(S)$ for a set $S$, we mean the cardinality of $S$.
	  	\item By $B_{\mathbb{R}^{d}}(\xi,r)$(or $B_{\mathbb{C}^{d}}(\xi,r)$), we mean the open ball in $\mathbb{R}^{d}$ (or $\mathbb{C}^{d}$) with the center $\xi$ and radius $r$. $\overline{B}_{\mathbb{R}^{d}}(\xi,r)$ (or $\overline{B}_{\mathbb{C}^{d}}(\xi,r)$) is its closure.
	  	\item By $u\in\mathcal{O}(\Omega)$, we mean $u$ is holomorphic on $\Omega$. $u\in C^{k}(\Omega)$ means $u$'s $k$-th order derivatives are continuous on $\Omega$.
	  \end{itemize}
		\section{Proof of Theorem \ref{main}}
	  Before establishing the continuum limit theory of (FNLS), we shall first prove the well-posedness of (FNLS), which is built by the following lemma from \cite{4}.
	  \begin{lemma}\label{wellposedness}
	  	For $s>s_{c}$, $p\ge3$, $d\ge2$, the FNLS (\ref{CFNLS}) is locally well-posed in $H^{s}(\mathbb{R}^{d})$. For $p>1$, $d\ge 1$, the FNLS (\ref{FNLS}) is globally well-posed in $L_{h}^{2}$. 
	  \end{lemma}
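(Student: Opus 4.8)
The plan is to prove the two assertions by separate, standard routes: a Strichartz-plus-contraction argument for the continuum equation (\ref{CFNLS}), and a Banach-space ODE (Cauchy--Lipschitz) argument for the lattice equation (\ref{FNLS}), the latter being available because on $L^{2}_{h}$ every operator in sight is bounded.

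For (\ref{CFNLS}) I would proceed as follows. First, record the Strichartz estimates for the fractional propagator $U(t)=e^{-it(-\Delta)^{\alpha/2}}$ on $\R^{d}$: for fractional-admissible pairs $(q,r)$ one has $\|U(t)f\|_{L^{q}_{t}L^{r}_{x}}\lesssim\|f\|_{\dot H^{\gamma}}$ for the appropriate $\gamma=\gamma(q,r,\alpha,d)$, together with its inhomogeneous (Christ--Kiselev) counterpart. Second, fix an admissible pair $(q,r)$ tied to the critical exponent $s_{c}$ and work in $X_{T}:=C([0,T];H^{s})\cap L^{q}([0,T];W^{s,r})$. Third, show that the Duhamel map $\Phi(u)(t)=U(t)u_{0}-i\mu\int_{0}^{t}U(t-\tau)\big(|u|^{p-1}u\big)(\tau)\,d\tau$ maps a ball of $X_{T}$ into itself and is a contraction when $T=T(\|u_{0}\|_{H^{s}})$ is small; the nonlinear term is estimated by the fractional Leibniz (Kato--Ponce) rule followed by H\"older and Sobolev embedding, and the hypothesis $s>s_{c}$ yields a positive power of $T$ in front of the nonlinear contribution, which closes the fixed point. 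Fourth, obtain uniqueness and continuous dependence on $u_{0}$ by running the same estimates on differences. The assumption $p\ge 3$ is used to guarantee that $z\mapsto|z|^{p-1}z$ has enough derivatives for the Leibniz-type bound at regularity $s$.

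For (\ref{FNLS}) the key observation is that the symbol $\big[\sum_{i=1}^{d}\tfrac{4}{h^{2}}\sin^{2}(\tfrac{h\xi_{i}}{2})\big]^{\alpha/2}$ is bounded pointwise by $(4d/h^{2})^{\alpha/2}$, so $(-\Delta_{h})^{\alpha/2}$ is a bounded self-adjoint operator on $L^{2}_{h}$ and $-i(-\Delta_{h})^{\alpha/2}$ generates a uniformly continuous unitary group. Next, the elementary bound $\|u\|_{L^{\infty}_{h}}\le h^{-d/2}\|u\|_{L^{2}_{h}}$ gives $L^{2}_{h}\hookrightarrow L^{\infty}_{h}$, whence for any $p>1$ the map $u\mapsto|u|^{p-1}u$ is Lipschitz on bounded subsets of $L^{2}_{h}$, with an ($h$-dependent) constant. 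Consequently $F(u):=-i(-\Delta_{h})^{\alpha/2}u-i\mu|u|^{p-1}u$ is locally Lipschitz on the Banach space $L^{2}_{h}$, and the Cauchy--Lipschitz (Picard) theorem produces a unique maximal solution $u_{h}\in C^{1}([0,T_{\max});L^{2}_{h})$. To reach global existence, pair (\ref{FNLS}) with $u_{h}$ and take real parts: since $(-\Delta_{h})^{\alpha/2}$ is self-adjoint and $\mu$ real, both terms are purely imaginary, so $\|u_{h}(t)\|_{L^{2}_{h}}=\|u_{0,h}\|_{L^{2}_{h}}$ for all $t$; this uniform-in-time a priori bound excludes finite-time blow-up and forces $T_{\max}=\infty$.

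I expect the only genuinely delicate step to be the nonlinear estimate in the continuum case: verifying that the fractional Leibniz rule is applicable to $|u|^{p-1}u$ at the Sobolev regularity $s$ (this is where $p\ge 3$, and for larger $s$ a paraproduct or commutator refinement, is needed) and that the exponents it produces form a valid fractional-admissible pair carrying a surplus power of $T$. The entire lattice part, by contrast, is soft, resting only on the boundedness of $(-\Delta_{h})^{\alpha/2}$ and the embedding $L^{2}_{h}\hookrightarrow L^{\infty}_{h}$.
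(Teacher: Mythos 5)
Your sketch is correct and reproduces the standard argument: for the continuum equation, fractional Strichartz estimates plus a Kato--Ponce/contraction mapping scheme in $C_tH^s\cap L^q_tW^{s,r}$, with $s>s_c$ providing the surplus power of $T$ and $p\ge3$ ensuring the nonlinearity is smooth enough for the fractional Leibniz rule; for the lattice equation, boundedness of $(-\Delta_h)^{\alpha/2}$ on $L^2_h$ together with the embedding $L^2_h\hookrightarrow L^\infty_h$ makes the Duhamel map locally Lipschitz so Picard gives local well-posedness, and $L^2_h$-conservation (self-adjointness of $(-\Delta_h)^{\alpha/2}$ and $\mu\in\mathbb{R}$) upgrades it to global. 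The paper itself gives no proof but simply cites \cite{4}, and your outline is exactly the argument that reference carries out.
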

	  \begin{proof}
	  	The proof can be found in \cite{4}.
	  \end{proof}
	   Another lemma is about some basic properties of discretization operator $d_{h}$ and linear interpolation operator $p_{h}$.
	   \begin{lemma}\label{discretization}
	   	For $1\le s\le 1$, $p>1$, $d\ge1$, we have the following estimates
	   	\begin{equation*}
	   		(I): \; \|d_{h}f\|_{H_{h}^{s}}\lesssim \|f\|_{H^{s}(\mathbb{R}^{d})};
	   	\end{equation*}
	   	\begin{equation*}
	   		(II):\; \|p_{h}u\|_{H^{s}(\mathbb{R}^{d})}\lesssim\|u\|_{H_{h}^{s}};
	   	\end{equation*}
	   	\begin{equation*}
	   		(III): \; \|p_{h}d_{h}f-f\|_{L^{2}(\mathbb{R}^{d})}\lesssim h^{s}\|f\|_{H^{s}(\mathbb{R}^{d})};
	   	\end{equation*}
	   	\begin{equation*}
	   			(IV): \; \|p_{h}U_{h}(t)u_{0,h}-U(t)u_{0}\|_{L^{2}(\mathbb{R}^{d})}\lesssim \langle t \rangle h^{\frac{s}{1+s}}\|u_{0}\|_{H^{s}(\mathbb{R}^{d})}, \quad u_{0,h}=d_{h}u_{0};  	
	    \end{equation*}
	    \begin{equation*}
	    	(V): \; \|p_{h}\left(|u_{h}|^{p-1}u_{h}\right)-|p_{h}u_{h}|^{p-1}p_{h}u_{h}\|_{L^{2}(\mathbb{R}^{d})}\lesssim h^{s}\|u_{h}\|_{L_{h}^{\infty}}^{p-1}\|u_{h}\|_{H_{h}}^{s}.
	    \end{equation*}
	   \end{lemma}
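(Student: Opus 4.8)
The plan is to prove the five estimates separately: (I)--(III) are Fourier-multiplier and approximation facts, (V) reduces to a first-order interpolation error, and (IV) is the one that genuinely couples the discretisation error to the fractional dispersion. For (I) and (II) I would pass to the Fourier side. Realising $d_{h}$ as convolution on $\R^{d}$ with the normalised average over $[0,h)^{d}$ followed by restriction to $h\Z^{d}$, the aliasing formula gives $(\mathcal{F}_{h}d_{h}f)(\xi)=\sum_{k\in\Z^{d}}\widehat{f}\big(\xi+\tfrac{2\pi k}{h}\big)\widehat{\chi}\big(h\xi+2\pi k\big)$ for $\xi\in\T^{d}/h$, where $\widehat{\chi}$ is the transform of the averaging kernel, with $|\widehat{\chi}|\le1$ and $|\widehat{\chi}(\zeta)|\lesssim\prod_{j}(1+|\zeta_{j}|)^{-1}$; similarly $p_{h}$ acts on the Fourier side as multiplication of the $2\pi/h$-periodic function $\mathcal{F}_{h}u$ by the transform of the nodal basis function, which enjoys the same size and decay. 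Since $0\le s\le1$ one has $\langle\xi\rangle^{s}\lesssim(1+|k|)^{s}\langle\xi+\tfrac{2\pi k}{h}\rangle^{s}$ for $\xi\in\T^{d}/h$, so a weighted Cauchy--Schwarz in $k$ absorbs the factor $(1+|k|)^{s}$ into the decay of the kernel over the shifted cubes, and Plancherel yields (I) and (II). For (III), on $\{|\xi|\lesssim 1/h\}$ the (aliasing-free) symbol of $p_{h}d_{h}$ differs from $1$ by $O(h|\xi|)$, while on $\{|\xi|\gtrsim 1/h\}$ both $p_{h}d_{h}f$ (by the decay of the nodal transform) and $f$ carry only high frequencies; splitting the frequency integral at $|\xi|\sim 1/h$ and using $|\xi|^{2}\lesssim h^{2s-2}\langle\xi\rangle^{2s}$ there bounds $\|p_{h}d_{h}f-f\|_{L^{2}}$ by $h^{s}\|f\|_{H^{s}}$.

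The heart of the lemma is (IV). I would write
\[
p_{h}U_{h}(t)d_{h}u_{0}-U(t)u_{0}=\big(p_{h}d_{h}-\mathrm{Id}\big)U(t)u_{0}+p_{h}\big(U_{h}(t)d_{h}u_{0}-d_{h}U(t)u_{0}\big).
\]
Since $U(t)$ is unitary on $H^{s}(\R^{d})$, (III) bounds the first term by $h^{s}\|u_{0}\|_{H^{s}}\le h^{s/(1+s)}\|u_{0}\|_{H^{s}}$. For the second term, (II) with $s=0$ gives $\|p_{h}v\|_{L^{2}}\lesssim\|v\|_{L_{h}^{2}}$, so it remains to estimate $\|U_{h}(t)d_{h}u_{0}-d_{h}U(t)u_{0}\|_{L_{h}^{2}}$; here I split $u_{0}$ into its frequency-$\{|\xi|\le N\}$ part and the rest, for some $N\lesssim 1/h$. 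On the high part, unitarity of $U_{h}(t)$ on $L_{h}^{2}$, of $U(t)$ on $L^{2}$, and (I) with $s=0$ give a contribution $\lesssim N^{-s}\|u_{0}\|_{H^{s}}$. On the low part there is no aliasing (its Fourier support lies strictly inside the Nyquist cube $\T^{d}/h$), so $U_{h}(t)d_{h}-d_{h}U(t)$ acts by the Fourier multiplier $\big(e^{-it\omega_{h}(\xi)}-e^{-it|\xi|^{\alpha}}\big)\widehat{\chi}(h\xi)$, where $\omega_{h}(\xi)=\big[\sum_{i=1}^{d}\tfrac{4}{h^{2}}\sin^{2}(\tfrac{h\xi_{i}}{2})\big]^{\alpha/2}$; a Taylor expansion of $\sin$ gives $|\omega_{h}(\xi)-|\xi|^{\alpha}|\lesssim h^{2}|\xi|^{\alpha+2}$ for $|\xi|\lesssim 1/h$, hence $|e^{-it\omega_{h}(\xi)}-e^{-it|\xi|^{\alpha}}|\le t\,|\omega_{h}(\xi)-|\xi|^{\alpha}|\lesssim t\,h^{2}|\xi|^{\alpha+2}$, and Plancherel bounds the low part by $\lesssim t\,h^{2}N^{\alpha+2-s}\|u_{0}\|_{H^{s}}$. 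Choosing $N$ to balance $N^{-s}$, $t\,h^{2}N^{\alpha+2-s}$ and $h^{s}$ produces the claimed $\langle t\rangle\,h^{s/(1+s)}\|u_{0}\|_{H^{s}}$; the factor $\langle t\rangle$ is exactly the cost of the inequality $|e^{-it\omega_{h}}-e^{-it|\xi|^{\alpha}}|\le t\,|\omega_{h}-|\xi|^{\alpha}|$.

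For (V), note that $F(z):=|z|^{p-1}z$ is locally Lipschitz with $|F(z)-F(w)|\lesssim(|z|+|w|)^{p-1}|z-w|$, and that $p_{h}(F(u_{h}))$ and $F(p_{h}u_{h})$ coincide at every node of $h\Z^{d}$; hence on a cell $x+[0,h)^{d}$ both differ from $F(u_{h}(x))$ by $\lesssim\|u_{h}\|_{L_{h}^{\infty}}^{p-1}\,\mathrm{osc}_{h}(x)$, where $\mathrm{osc}_{h}(x)$ is the largest of the increments $|u_{h}(y)-u_{h}(x)|$ over the neighbouring nodes $y$ used in the interpolation near $x$. Consequently $\|p_{h}(F(u_{h}))-F(p_{h}u_{h})\|_{L^{2}(\R^{d})}^{2}\lesssim\|u_{h}\|_{L_{h}^{\infty}}^{2(p-1)}\sum_{x\in h\Z^{d}}h^{d}\,\mathrm{osc}_{h}(x)^{2}$, and by Plancherel on $h\Z^{d}$,
\[
\sum_{x\in h\Z^{d}}h^{d}\,\mathrm{osc}_{h}(x)^{2}\ \lesssim\ \sum_{i=1}^{d}\|u_{h}(\cdot+he_{i})-u_{h}\|_{L_{h}^{2}}^{2}=\int_{\T^{d}/h}\Big(\sum_{i=1}^{d}4\sin^{2}(\tfrac{h\xi_{i}}{2})\Big)\,|\mathcal{F}_{h}u_{h}(\xi)|^{2}\,d\xi\ \lesssim\ h^{2s}\|u_{h}\|_{H_{h}^{s}}^{2},
\]
the last step using $\sum_{i}4\sin^{2}(\tfrac{h\xi_{i}}{2})\le\min(h^{2}|\xi|^{2},4d)\lesssim h^{2s}\langle\xi\rangle^{2s}$ for $0\le s\le1$. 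This is (V).

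The step I expect to be the main obstacle is (IV): it is the one place where one must handle at once the aliasing — which forbids pushing the frequency cut-off $N$ beyond the Nyquist scale $1/h$ — and the quantitative symbol comparison $|\omega_{h}(\xi)-|\xi|^{\alpha}|\lesssim h^{2}|\xi|^{\alpha+2}$, which is sharp on $\{|\xi|\lesssim 1/h\}$, merely $O(1)$ near the corners of $\T^{d}/h$, and must be weighed against the $\langle\xi\rangle^{-s}$ gain from $u_{0}\in H^{s}$ (this is where the Taylor expansion of $\sin$ and the behaviour of $(\,\cdot\,)^{\alpha/2}$ for the admissible $\alpha\in(0,2)\setminus\{1\}$ come in). Specialising to $s=\tfrac{\alpha}{2}$ as in Theorem~\ref{main} gives $h^{s/(1+s)}=h^{\alpha/(2+\alpha)}$, which is the rate appearing there.
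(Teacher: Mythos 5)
The paper does not actually prove this lemma --- it simply cites \cite{1} and \cite{3} --- so your argument is a genuine reconstruction, and it follows the standard Fourier/aliasing route that those references use. Parts (I), (II), (III) and (V) are correct as you have set them up: the aliasing identity plus the size and decay of the kernels $\widehat{\chi}$, together with $\langle\xi\rangle^{s}\lesssim(1+|k|)^{s}\langle\xi+\tfrac{2\pi k}{h}\rangle^{s}$ and weighted Cauchy--Schwarz in $k$, give (I)--(II); the low/high frequency split at scale $1/h$ and the bound $\min(1,h|\xi|)\lesssim h^{s}\langle\xi\rangle^{s}$ give (III); and the nodal coincidence of $p_h(F(u_h))$ and $F(p_h u_h)$ plus the Lipschitz bound on $F$ and the Plancherel identity $\|u_h(\cdot+he_i)-u_h\|_{L_h^2}^2=\int 4\sin^2(\tfrac{h\xi_i}{2})|\mathcal{F}_h u_h|^2$ give (V).

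In (IV), however, the final balancing step is asserted rather than carried out, and it does not in fact produce the stated exponent in the full range $0\le s\le1$. Balancing $N^{-s}$ against $th^{2}N^{\alpha+2-s}$ gives $N=(th^{2})^{-1/(\alpha+2)}$ and hence the bound $(th^{2})^{s/(\alpha+2)}$ on the low-frequency part (with the constrained value $h^{s}$ when $(th^{2})^{-1/(\alpha+2)}>1/h$, i.e.\ $t<h^{\alpha}$). For $t\simeq1$ this is $h^{2s/(\alpha+2)}$, which dominates $h^{s/(1+s)}$ precisely when $2s\ge\alpha$; it matches the claimed rate $h^{s/(1+s)}$ with equality at $s=\alpha/2$ (the only case used in the paper, giving $h^{\alpha/(2+\alpha)}$), but is strictly \emph{worse} than the lemma's stated bound when $s<\alpha/2$. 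So either you should restrict the claim to $s\ge\alpha/2$, or a different low-frequency symbol estimate (or decomposition) is needed to recover $h^{s/(1+s)}$ for all $0\le s\le 1$; the present write-up leaves a gap for small $s$. The symbol estimate itself, $|\omega_h(\xi)-|\xi|^{\alpha}|\lesssim h^{2}|\xi|^{\alpha+2}$ on $\{h|\xi|\lesssim1\}$, is justified correctly (it uses $g_h\gtrsim|\xi|^{2}$ there and concavity of $t\mapsto t^{\alpha/2}$ for $\alpha<2$), and the remaining ingredients --- the commutator decomposition, the aliasing-free low block, the $N^{-s}$ tail via (I) at $s=0$ --- are sound.
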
 
	  \begin{proof}
	  	See e.g. \cite{1} and \cite{3}.
	  \end{proof}
	  The most key idea in proving Theorem \ref{main} is establishing uniform Strichartz estimate for the propagator $U_{h}(t)$. Before we introduce it, we need to use Littlewood-Paley theory on the lattice $h\Z^{d}$, which is initially established in \cite{5}.
	  
	  Take radial cutoff $\phi\in C_{c}^{\infty}(\R^{d})$, with $0\le\phi\le1$, $\supp(\phi)\subseteq \lbrace |\xi|\le 2\pi\rbrace$ and $\phi \equiv1 $ on $\lbrace |\xi|\le \pi\rbrace$. Then let $\eta(\xi):=\phi(\xi)-\phi(2\xi)$ and dyadic number $N\le 1$, the Littlewood-Paley projections is defined as 
	  \begin{equation*}
	  	P_{N}f=P_{N,h}f:=\mathcal{F}^{-1}\left \lbrace \eta(\frac{h\xi}{N})\mathcal{F}f\right\rbrace,
	  \end{equation*}
	  where $\mathcal{F}$ is Fourier transform on $\mathbb{R}^{d}$. As a result, for $u:h\Z^{d}\to \mathbb{C}$, we have the identity
	  \begin{equation*}
	  	\sum_{N\le1}P_{N}u=u,
	  \end{equation*}
	 as we can regard the frequency space $\frac{\mathbb{T}^{d}}{h}$ belongs to $\mathbb{R}^{d}$.
	 
	 Now we can state the most essential but difficult lemma as follows.
	 \begin{lemma}\label{3.4}
	 	For $d=3$, $\alpha\in (1,2)$, $h\in (0,1]$, $N\le 1$, there exists constant $C=C(\alpha)>0$ s.t.
	 	\begin{equation*}
	 		\|U_{h}(t)P_{N}u\|_{L_{h}^{\infty}}\le C(\alpha)\left(\dfrac{N}{h}\right)^{3-\alpha}|t|^{-1}\|u\|_{L_{h}^{1}}.
	 	\end{equation*}
	 \end{lemma}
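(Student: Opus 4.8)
\textit{Strategy and main steps.}
The plan is to realize $U_h(t)P_N$ as convolution against a kernel, bound that kernel in $L_h^\infty$ by a uniform oscillatory–integral estimate, and then pass to the statement by Young's inequality on the lattice. Writing $\omega_h(\xi)=\big[\sum_{i=1}^{3}\tfrac{4}{h^{2}}\sin^{2}(h\xi_i/2)\big]^{\alpha/2}$ for the symbol, one has $U_h(t)P_Nu=K_{N,h}(t,\cdot)\ast_h u$ with
\[
K_{N,h}(t,z)=\frac1{(2\pi)^3}\int_{\T^3/h}e^{\,iz\cdot\xi-it\omega_h(\xi)}\,\eta(h\xi/N)\,d\xi ,
\]
and since the discrete convolution satisfies $\|f\ast_h u\|_{L_h^\infty}\le\|f\|_{L_h^\infty}\|u\|_{L_h^1}$ it suffices to prove $\sup_z|K_{N,h}(t,z)|\lesssim_\alpha(N/h)^{3-\alpha}|t|^{-1}$. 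Rescaling $\xi=\tfrac Nh\zeta$ and using $\sum_i\tfrac4{h^2}\sin^2(h\xi_i/2)=\tfrac{N^2}{h^2}\psi_N(\zeta)$ with $\psi_N(\zeta):=\sum_{i=1}^3\tfrac4{N^2}\sin^2(N\zeta_i/2)$, so that $\omega_h(\xi)=(N/h)^\alpha\Psi_N(\zeta)$, $\Psi_N:=\psi_N^{\alpha/2}$, this becomes (after complex conjugation to reduce to $t>0$) the uniform bound
\[
|J_N(y,\lambda)|\lesssim_\alpha\lambda^{-1},\qquad J_N(y,\lambda):=\int e^{\,i(y\cdot\zeta-\lambda\Psi_N(\zeta))}\,\eta(\zeta)\,\mathbf{1}_{[-\pi/N,\pi/N]^3}(\zeta)\,d\zeta ,
\]
to hold for all $y\in\R^3$, all $\lambda>0$ and all $N\in(0,1]$; the cutoff to the fundamental domain equals $1$ on $\supp\eta$ once $N\le1/2$ and introduces only a harmless codimension-one boundary cutoff when $N$ is near $1$.

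\textit{Reduction to critical points; the small-$N$ regime.}
For $\lambda\le1$ the trivial bound suffices, so take $\lambda\ge1$ and write the phase as $\lambda\varphi_{v,N}(\zeta)$ with $\varphi_{v,N}(\zeta)=v\cdot\zeta-\Psi_N(\zeta)$, $v=y/\lambda$. Since $\nabla\Psi_N(\supp\eta)$ stays in a fixed compact set for $N\in(0,1]$, for $v$ outside a fixed compact set $|\nabla\varphi_{v,N}|\gtrsim1$ on $\supp\eta$ and repeated integration by parts gives $|J_N|\lesssim_M\lambda^{-M}$; so one is left with a finite family of oscillatory integrals $\int e^{\pm i\lambda\varphi_{v,N}}a$, $a\in C_c^\infty$ fixed, to be bounded uniformly over a compact set of parameters $(v,N)$, and by a partition of unity and non-stationary phase this localizes to small neighbourhoods of the critical points $\zeta_0$ (where $\nabla\Psi_N(\zeta_0)=v$). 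The decisive object is the Hessian there,
\[
\nabla^2\Psi_N=\tfrac\alpha2\,\psi_N^{\frac\alpha2-2}\Big[\psi_N\,\mathrm{diag}(2\cos N\zeta_i)+\big(\tfrac\alpha2-1\big)\,\nabla\psi_N\otimes\nabla\psi_N\Big].
\]
When $N$ is below a threshold $N_0(\alpha)$, $\cos(N\zeta_i)\ge c_0(\alpha)>0$ on $\supp\eta$ and the bracket is, to leading order, the positive-definite matrix $2|\zeta|^2I-4(1-\tfrac\alpha2)\,\zeta\otimes\zeta$ (eigenvalues $2(\alpha-1)|\zeta|^2$ and $2|\zeta|^2$); hence $\nabla^2\Psi_N>0$, every critical point is non-degenerate, and stationary phase gives the stronger $\lambda^{-3/2}$ — this is exactly where $\alpha>1$ is used.

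\textit{The large-$N$ regime: rank count and the $D_4$ cubic.}
For $N\in[N_0(\alpha),1]$ the Hessian can degenerate only where some $\cos N\zeta_i$ vanish. Using the matrix-determinant lemma one checks that $\mathrm{rank}\,\nabla^2\Psi_N(\zeta_0)\ge2$ unless all three $\cos N\zeta_i=0$; in the rank-$\ge2$ case stationary phase in the non-degenerate directions already gives $\lambda^{-1}$, and the remaining one-dimensional degenerate direction contributes an extra $\lambda^{-1/m}$ with $m$ finite (by real-analyticity of $\Psi_N$ where $\psi_N>0$), so it only helps. The sole remaining case is the maximally degenerate one, at $\zeta^\ast=\tfrac\pi{2N}(\pm1,\pm1,\pm1)$ (which lies in $\supp\eta$ only when $N$ is not too small): there the Hessian has rank one with kernel $(\nabla\psi_N(\zeta^\ast))^\perp=\{(\pm1,\pm1,\pm1)\}^\perp$. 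Here I would apply stationary phase along the good direction $\nabla\psi_N(\zeta^\ast)/|\nabla\psi_N(\zeta^\ast)|$, gaining $\lambda^{-1/2}$, and reduce by the splitting lemma to a two-dimensional oscillatory integral whose phase vanishes to order exactly three. An explicit Taylor expansion using $\sin^2(N\zeta_i/2)=\tfrac12(1-\cos N\zeta_i)$ and $\cos(\tfrac{\epsilon_i\pi}2+\theta)=-\epsilon_i\sin\theta$ gives $\psi_N(\zeta^\ast+\rho)=\tfrac6{N^2}+\tfrac2N\sum_i\epsilon_i\rho_i-\tfrac N3\sum_i\epsilon_i\rho_i^3+O(|\rho|^5)$, so on the kernel $\{\sum_i\epsilon_i\rho_i=0\}$ the cubic part of the reduced phase is proportional to $\sum_i\epsilon_i\rho_i^3$, which after the sign change $\sigma_i=\epsilon_i\rho_i$ equals $\sum_i\sigma_i^3|_{\sum\sigma_i=0}=3\sigma_1\sigma_2\sigma_3|_{\sum\sigma_i=0}=-3\,\sigma_1\sigma_2(\sigma_1+\sigma_2)$ — a $D_4$-type cubic, a product of three distinct real linear forms, with coefficient bounded away from $0$ uniformly in $N$ on compacts. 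Its Newton polyhedron (in adapted coordinates) has Newton distance $3/2$, so the uniform van der Corput / Newton-polyhedron estimate recalled in the Appendix bounds the two-dimensional integral by $\lesssim\lambda^{-2/3}\log\lambda$, whence $|J_N|\lesssim\lambda^{-7/6}\log\lambda\lesssim\lambda^{-1}$ in this case as well. Summing the finitely many localized pieces proves the claim.

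\textit{Where the difficulty lies.}
The main obstacle is not any individual computation but the \emph{uniformity in $N$}: over $[N_0(\alpha),1]\times\{\text{relevant }v\}$ the critical points, their number and their degeneracy type all move, so one cannot invoke a single stationary-phase statement. The resolution combines (i) the explicit identification above of the worst phase (a $D_4$ cubic of Newton distance $3/2$) at the isolated maximally degenerate points, together with the elementary rank count showing every other critical point is at least as good, and (ii) an upgrade of these pointwise-in-$(v,N)$ bounds to a uniform one via the stability of the oscillation index under small perturbations of the phase — Karpushkin's theorem in the general case, and its sharp two-dimensional refinements (Ikromov--M\"uller) after the reduction to two variables — together with compactness of $\supp\eta\times\{(v,N)\}$. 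A secondary technical nuisance, the sharp cutoff to the fundamental domain when $N$ is near $1$, is handled by smoothing it out, the resulting codimension-one boundary contribution being itself $O(\lambda^{-1})$ in three dimensions.
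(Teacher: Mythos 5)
Your overall strategy (reduce to the convolution kernel, rescale to frequency $\sim 1$, split at a threshold $N_0(\alpha)$, treat the degenerate stationary points via Newton--polyhedron / oscillation--index estimates, and interpolate with the trivial $N^{3}$ bound) is the same as the paper's, and your identification of the worst singularity is exactly the paper's: at the maximally degenerate point $\zeta^{\ast}=\frac{\pi}{2N}(\pm1,\pm1,\pm1)$ the reduced two--dimensional phase is a product of three distinct linear forms, Newton distance $3/2$, oscillation index $-2/3$, hence $\lambda^{-1/2-2/3}=\lambda^{-7/6}$, matching the paper's $\Gamma_{3}$ case $(-\tfrac{7}{6},0)$ in Lemma~\ref{g}.

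Where you genuinely diverge is in the small--$N$ regime. You observe that once $N<N_{0}(\alpha)$ the rescaled symbol $\Psi_{N}=\psi_{N}^{\alpha/2}$ converges smoothly to $|\zeta|^{\alpha}$ on $\mathrm{supp}\,\eta$, whose Hessian has eigenvalues $\simeq(\alpha-1)|\zeta|^{\alpha-2}$ and $\simeq|\zeta|^{\alpha-2}$, so $\nabla^{2}\Psi_{N}$ is uniformly positive--definite (here $\alpha>1$ is used) and one gets $\lambda^{-3/2}$ by a single non--degenerate stationary phase. The paper instead runs Cases 2 and 3: passing to polar coordinates, doing stationary phase in the angular variables $\phi$, and a Van der Corput estimate in the radial variable $\rho$. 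Both routes end up with $\lambda^{-3/2}$ and then interpolate down to $\lambda^{-1}$; your version is shorter and more transparent, while the paper's polar decomposition is the more classical route (and is the template used in the 2D predecessor). For the large--$N$ regime the paper carries out a fully explicit case analysis $\Gamma_{1},\Gamma_{2},\Gamma_{3}$ with concrete linear changes of variables (Lemma~\ref{g}), giving explicit oscillation indices $-\tfrac{4}{3},-\tfrac{5}{4},-\tfrac{7}{6}$, and then turns this into a uniform bound via the $M(\cdot)\curlyeqprec(\beta,p)$ formalism from the Appendix; you replace the explicit $\Gamma_{1},\Gamma_{2}$ computations by a Hessian rank count (rank $\ge2$ already yields $\lambda^{-1}$) and invoke Karpushkin / Ikromov--M\"uller stability for uniformity, which is the same machinery the Appendix encodes.

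Two smaller remarks. First, the sentence ``the Hessian can degenerate only where some $\cos N\zeta_{i}$ vanish'' is not accurate: the paper's $\Gamma_{1}$ degeneracy occurs with all $\cos N\zeta_{i}\neq0$. It does not affect your conclusion, because your subsequent claim (rank $\ge2$ unless all three cosines vanish) is correct and also covers $\Gamma_{1}$, but the stated reasoning should be fixed. Second, the step ``the remaining one--dimensional degenerate direction contributes an extra $\lambda^{-1/m}$ with $m$ finite, so it only helps'' is true but would benefit from a sentence noting that the critical points are isolated (this is the paper's Theorem~3.2 that $\sup_{v}\#(\Omega_{v})<\infty$), so the one--dimensional residual phase after the splitting lemma is a nonconstant real--analytic function with a finite--order critical point, and that the needed lower bound on some derivative is supplied uniformly over the compact parameter set by the stability theorem you cite.
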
 
	  Assuming the Lemma \ref{3.4}, we can derive uniform Strichartz estimate on the lattice.
	  \begin{thm}\label{uniform}
	  
	  Under the hypothesis of Lemma \ref{3.4}, we have the uniform Strichartz estimate below
	  	\begin{equation*}
	  		\|U_{h}(t)u\|_{L_{t}^{q}L_{h}^{r}}\lesssim_{q,r}\left\| |\nabla_{h}|^{(3-\alpha)(\frac{1}{2}-\frac{1}{r})}u\right\|_{L_{h}^{2}},
	  	\end{equation*}
	  	where $(q,r)$ is Strichartz admissible pair, satisfying
	  	\begin{equation*}
	  		\frac{1}{q}+\frac{1}{r}=\frac{1}{2}, \quad q\in [2,\infty],\; r\in[2,\infty). 
	  	\end{equation*}
	  \end{thm}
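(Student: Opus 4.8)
\emph{Plan.} The plan is to run the classical dyadic $TT^{*}$/Littlewood--Paley argument, with Lemma \ref{3.4} as the single-frequency dispersive input. First fix a dyadic $N\le 1$. Since $U_{h}(t)$ is unitary on $L_{h}^{2}$ and $P_{N}$ is a Fourier multiplier bounded on $L_{h}^{2}$ uniformly in $h,N$, one has the energy bound $\|U_{h}(t)P_{N}u\|_{L_{h}^{2}}\lesssim\|u\|_{L_{h}^{2}}$ uniformly in $t,h$. Interpolating this with the dispersive estimate of Lemma \ref{3.4} gives, for every $r\in[2,\infty]$,
\begin{equation*}
\|U_{h}(t)P_{N}u\|_{L_{h}^{r}}\lesssim \Big(\tfrac{N}{h}\Big)^{(3-\alpha)(1-\frac{2}{r})}|t|^{-(1-\frac{2}{r})}\|u\|_{L_{h}^{r'}},\qquad \tfrac{1}{r}+\tfrac{1}{r'}=1 .
\end{equation*}

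Next I would feed this into the non-endpoint $TT^{*}$ argument at frequency $N$. Writing $T_{N}f:=U_{h}(t)P_{N}f$, the operator $T_{N}T_{N}^{*}$ acts by $G\mapsto\int_{\R}U_{h}(t-s)P_{N}^{2}G(s)\,ds$; combining Minkowski's inequality in the time integral, the interpolated dispersive bound above (together with the uniform $L_{h}^{r'}$-boundedness of $P_{N}$ for $1<r'<\infty$), and the Hardy--Littlewood--Sobolev inequality in $t$ — whose exponents close precisely because of the admissibility relation $\tfrac{1}{q}+\tfrac{1}{r}=\tfrac12$, equivalently $1-\tfrac{2}{r}=\tfrac{2}{q}\in(0,1)$, which forces $r<\infty$ (the endpoint $(q,r)=(2,\infty)$ is genuinely excluded, as it must be since the dispersive decay here is exactly $|t|^{-1}$) — one obtains $\|T_{N}T_{N}^{*}G\|_{L_{t}^{q}L_{h}^{r}}\lesssim_{q,r}(N/h)^{2(3-\alpha)(\frac12-\frac1r)}\|G\|_{L_{t}^{q'}L_{h}^{r'}}$, and hence, by the standard duality step applied to $P_{N}u$ with a slightly enlarged cutoff, the frequency-localized Strichartz estimate
\begin{equation*}
\|U_{h}(t)P_{N}u\|_{L_{t}^{q}L_{h}^{r}}\lesssim_{q,r}\Big(\tfrac{N}{h}\Big)^{(3-\alpha)(\frac12-\frac1r)}\|P_{N}u\|_{L_{h}^{2}} .
\end{equation*}
The case $q=\infty$, $r=2$ is just the energy estimate, with zero derivative loss, and is consistent with the above.

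It then remains to sum over dyadic $N\le1$. Using the ($h$-uniform) Littlewood--Paley theory on $h\Z^{d}$ from \cite{5}, namely the square-function characterization $\|F\|_{L_{h}^{r}}\simeq\|(\sum_{N\le1}|P_{N}F|^{2})^{1/2}\|_{L_{h}^{r}}$ valid for $1<r<\infty$, together with the fact that $P_{N}$ commutes with $U_{h}(t)$, gives
\begin{equation*}
\|U_{h}(t)u\|_{L_{t}^{q}L_{h}^{r}}\lesssim\Big\|\big(\textstyle\sum_{N\le1}|U_{h}(t)P_{N}u|^{2}\big)^{1/2}\Big\|_{L_{t}^{q}L_{h}^{r}}\le\Big(\sum_{N\le1}\|U_{h}(t)P_{N}u\|_{L_{t}^{q}L_{h}^{r}}^{2}\Big)^{1/2},
\end{equation*}
where the last inequality is Minkowski's inequality, legitimate because $q,r\ge2$. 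Plugging in the frequency-localized estimate and invoking Plancherel on the lattice — noting that on $\supp\eta(h\cdot/N)$ the symbol of $|\nabla_{h}|$ is $\simeq N/h$ (here $N\le1$ is used so that $\sin(h\xi_{i}/2)\simeq h\xi_{i}/2$), whence $\sum_{N\le1}(N/h)^{2(3-\alpha)(\frac12-\frac1r)}\|P_{N}u\|_{L_{h}^{2}}^{2}\simeq\||\nabla_{h}|^{(3-\alpha)(\frac12-\frac1r)}u\|_{L_{h}^{2}}^{2}$ — yields the claimed bound.

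Once Lemma \ref{3.4} is granted, the only genuinely delicate points are bookkeeping ones: tracking the $N/h$ loss through the interpolation and the $TT^{*}$ step so that it recombines into exactly the derivative weight $|\nabla_{h}|^{(3-\alpha)(\frac12-\frac1r)}$, and respecting the constraints $q,r\ge2$ and $r<\infty$ (the loss of the endpoint being unavoidable because the dispersive decay rate $|t|^{-1}$ is critical). Thus the real obstacle of the whole scheme is the proof of Lemma \ref{3.4}, which is deferred to Section 3.
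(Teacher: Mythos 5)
Your argument is correct and follows essentially the same route as the paper: interpolate the energy bound with the frequency-localized dispersive estimate of Lemma \ref{3.4}, run the non-endpoint $TT^{*}$ argument (the paper packages this by rescaling time and invoking the abstract Strichartz theorem of \cite{6}, whereas you unwind it with Hardy--Littlewood--Sobolev, which is the same mechanism), and then sum over dyadic $N$ via the lattice Littlewood--Paley square-function estimate and Minkowski. The paper's written time-rescaling exponent $(N/h)^{(3-\alpha)(\frac12-\frac1r)}$ appears to be a typo for $(N/h)^{3-\alpha}$, and your explicit bookkeeping of the $(N/h)$-loss through the $TT^{*}$ step is the cleaner way to see the final derivative weight emerge.
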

	  \begin{proof}
	  	Consider the operator $\widetilde{U}(t):=P_{N}U_{h}\left((\frac{N}{h})^{(3-\alpha)(\frac{1}{2}-\frac{1}{r})}t\right)\widetilde{P_{N}}$, where $\widetilde{P_{N}}:=P_{\frac{N}{2}}+P_{N}+P_{2N}$. Directly, $\left \lbrace \widetilde{U}(t)\right\rbrace_{t\in\mathbb{R}}$ meets the requirement of Theorem 1.2 in \cite{6}, which yields
	  	\begin{equation*}
	  		\|U_{h}(t)P_{N}u\|_{L_{t}^{q}L_{h}^{r}}\lesssim_{q,r}\left(\frac{N}{h}\right)^{(3-\alpha)(\frac{1}{2}-\frac{1}{r})}\|P_{N}u\|_{L_{h}^{2}}\simeq\left\|P_{N}|\nabla_{h}|^{(3-\alpha)(\frac{1}{2}-\frac{1}{r})}u\right\|_{L_{h}^{2}}.
	  	\end{equation*}
	  	Then invoking Littlewood-Paley inequality on the lattice (see e.g. \cite{5}), we get
	  	\begin{equation*}
	  		\|U_{h}(t)u\|_{L_{t}^{q}L_{h}^{r}}\simeq\left(\sum_{N\le1}\|U_{h}(t)P_{N}u\|_{L_{t}^{q}L_{h}^{r}}^{2}\right)^{\frac{1}{2}}\lesssim \left(\sum_{N\le1}\left\|P_{N}|\nabla_{h}|^{(3-\alpha)(\frac{1}{2}-\frac{1}{r})}u\right\|_{L_{h}^{2}}^{2}\right)^{\frac{1}{2}}
	  	\end{equation*}
	  	\begin{equation*}
	  		\simeq\left\| |\nabla_{h}|^{(3-\alpha)(\frac{1}{2}-\frac{1}{r})}u\right\|_{L_{h}^{2}}.
	  	\end{equation*}
	  \end{proof}
		\begin{coro}\label{coro}
			With the same hypothesis, there exists Strichartz admissible pair $(q_{0},r_{0})$, $p-1<q_{0}$, and uniform $L_{h}^{\infty}$ estimate below
			\begin{equation*}
				\|U_{h}(t)u\|_{L_{t}^{q_{0}}L_{h}^{\infty}}\lesssim \|u\|_{H_{h}^{\frac{\alpha}{2}}}.
			\end{equation*}
		\end{coro}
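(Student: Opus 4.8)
The plan is to deduce the Corollary from Theorem~\ref{uniform} by trading the spatial $L_h^\infty$ norm for an $L_h^{r_0}$ norm at the cost of finitely many derivatives, via the uniform-in-$h$ Sobolev embedding on the lattice, and then absorbing those derivatives into $\|\cdot\|_{H_h^{\alpha/2}}$. Fix a Strichartz admissible pair $(q_0,r_0)$ with $\tfrac1{q_0}+\tfrac1{r_0}=\tfrac12$ and $2<r_0<\infty$, and an exponent $s>\tfrac{3}{r_0}$; then $\|g\|_{L_h^\infty}\lesssim\|\langle\nabla_h\rangle^{s}g\|_{L_h^{r_0}}$ uniformly in $h$ (Sobolev embedding on $h\Z^3$; see \cite{5} or \cite{1}), so, since $\langle\nabla_h\rangle^{s}$ commutes with the Fourier multiplier $U_h(t)$,
\begin{equation*}
\|U_h(t)u\|_{L_t^{q_0}L_h^\infty}\lesssim \big\|U_h(t)\langle\nabla_h\rangle^{s} u\big\|_{L_t^{q_0}L_h^{r_0}}.
\end{equation*}
Applying Theorem~\ref{uniform} to $\langle\nabla_h\rangle^{s}u$ and then using $|\xi|\le\langle\xi\rangle$ together with $\langle\xi\rangle\ge1$ on $\T^3/h$, the right-hand side is bounded by
\begin{equation*}
\big\||\nabla_h|^{(3-\alpha)(\frac12-\frac1{r_0})}\langle\nabla_h\rangle^{s} u\big\|_{L_h^2}\lesssim \big\|\langle\nabla_h\rangle^{\,(3-\alpha)(\frac12-\frac1{r_0})+s}u\big\|_{L_h^2}\le\|u\|_{H_h^{\alpha/2}},
\end{equation*}
the last inequality provided $(3-\alpha)(\tfrac12-\tfrac1{r_0})+s\le\tfrac\alpha2$. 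Thus the estimate follows once we exhibit $s,r_0,q_0$ obeying
\begin{equation*}
\tfrac{3}{r_0}<s,\qquad s\le \tfrac{\alpha}{2}-(3-\alpha)\big(\tfrac12-\tfrac1{r_0}\big),\qquad q_0=\tfrac{2r_0}{r_0-2}>p-1.
\end{equation*}

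The main point is that these three constraints are simultaneously satisfiable exactly in the parameter range of Theorem~\ref{main}. The first two are compatible iff $\tfrac{3}{r_0}<\tfrac{\alpha}{2}-(3-\alpha)(\tfrac12-\tfrac1{r_0})$, which rearranges to $\tfrac1{r_0}<1-\tfrac{3}{2\alpha}$, i.e.\ $r_0>r_{\min}:=\tfrac{2\alpha}{2\alpha-3}$. Since $\tfrac{3(p-1)}{p+1}\ge\tfrac32$ for $p\ge3$, the hypothesis $\alpha>\tfrac{3(p-1)}{p+1}$ of Theorem~\ref{main} forces $\alpha\in(\tfrac32,2)$, whence $r_{\min}\in(4,\infty)$ is finite and $>2$. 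The third constraint is vacuous for $p\le3$ and reads $r_0<r_{\max}:=\tfrac{2(p-1)}{p-3}$ for $p>3$. A short computation shows $r_{\min}<r_{\max}\iff\alpha(p+1)>3(p-1)$, i.e.\ $\iff\alpha>\tfrac{3(p-1)}{p+1}$; combined with $p<5$, which keeps the $\alpha$-window nonempty, this lets us pick $r_0\in(r_{\min},r_{\max})$, set $q_0=\tfrac{2r_0}{r_0-2}\in(2,\infty)$ (hence admissible, and since $q_0$ decreases in $r_0$ with value $p-1$ at $r_0=r_{\max}$, we get $q_0>p-1$), and choose $s$ in the nonempty interval $\big(\tfrac{3}{r_0},\,\tfrac{\alpha}{2}-(3-\alpha)(\tfrac12-\tfrac1{r_0})\big]$. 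This produces the required pair $(q_0,r_0)$.

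An essentially equivalent route bypasses the Sobolev embedding: write $u=\sum_{N\le1}P_Nu$, apply Bernstein's inequality on the lattice $\|U_h(t)P_Nu\|_{L_h^\infty}\lesssim(N/h)^{3/r_0}\|U_h(t)P_Nu\|_{L_h^{r_0}}$ (the frequency support is preserved by $U_h(t)$), bound each piece by Theorem~\ref{uniform} as $\|U_h(t)P_Nu\|_{L_t^{q_0}L_h^{r_0}}\lesssim(N/h)^{(3-\alpha)(\frac12-\frac1{r_0})}\|P_Nu\|_{L_h^2}$, and sum in $N$ by Cauchy--Schwarz against $\|u\|_{H_h^{\alpha/2}}^2\simeq\sum_{N\le1}\langle N/h\rangle^{\alpha}\|P_Nu\|_{L_h^2}^2$; the resulting dyadic sum $\sum_{N\le1}(N/h)^{2\beta_0}\langle N/h\rangle^{-\alpha}$, with $\beta_0:=\tfrac{3}{r_0}+(3-\alpha)(\tfrac12-\tfrac1{r_0})$, converges uniformly in $h\in(0,1]$ precisely when $0<\beta_0<\tfrac\alpha2$, which is the same window for $r_0$ as above. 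Either way, no genuine analytic difficulty remains once Theorem~\ref{uniform} (hence Lemma~\ref{3.4}) is in hand; the only thing to watch is the exponent bookkeeping, and the one substantive observation is that compatibility of ``$q_0>p-1$'' with the Strichartz derivative loss $(3-\alpha)(\tfrac12-\tfrac1{r_0})$ is exactly the energy-subcriticality condition $\alpha>\tfrac{3(p-1)}{p+1}$.
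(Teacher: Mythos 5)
Your argument is essentially the paper's: both reduce the $L_h^\infty$ bound to Theorem~\ref{uniform} via the uniform-in-$h$ Sobolev embedding $\|g\|_{L_h^\infty}\lesssim\|g\|_{W_h^{s,r_0}}$ with $s>3/r_0$, then absorb the Strichartz derivative loss $(3-\alpha)(\tfrac12-\tfrac1{r_0})$ plus $s$ into $H_h^{\alpha/2}$. The only difference is cosmetic: the paper simply writes down the admissible pair $(q_0,r_0)=(\tfrac{2\alpha}{3-\alpha+\delta},\tfrac{2\alpha}{2\alpha-3-\delta})$ with small $\delta>0$ and the endpoint $s=\tfrac\alpha2-(3-\alpha)(\tfrac12-\tfrac1{r_0})$, whereas you derive the admissible window for $r_0$ abstractly and verify it is nonempty exactly under $\alpha>\tfrac{3(p-1)}{p+1}$ — a correct and slightly more illuminating bookkeeping, but the same proof.
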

		\begin{proof}
			We can take $(q_{0},r_{0}):=(\frac{2\alpha}{3-\alpha+\delta},\frac{2\alpha}{2\alpha-3-\delta})$, for some sufficiently small $\delta>0$. Direct calculation shows that such $(q_{0},r_{0})$ satisfies the above requirement. Then we also take   $s:=\frac{\alpha}{2}-(3-\alpha)(\frac{1}{2}-\frac{1}{r_{0}})$, then $s>\frac{3}{r_{0}}$. Consequently, we have
			\begin{equation*}
			\|U_{h}(t)u\|_{L_{t}^{q_{0}}L_{h}^{\infty}}\lesssim \|U_{h}u\|_{L_{t}^{q_{0}}W_{h}^{s,r_{0}}}\lesssim\|u\|_{H_{h}^{\frac{\alpha}{2}}}.
			\end{equation*}
			The first inequality comes from Sobolev embedding (see e.g. \cite{5} or \cite{7}), and the second inequality is derived from Theorem \ref{uniform}.
		\end{proof}
		Finally, We need another and the last lemma before proving our Theorem \ref{main}.
		\begin{lemma}\label{local}
			Still assuming the same hypothesis and $(q_{0},r_{0})$ is given by Corollary \ref{coro}, the solution $u,u_{h}$ for equations (\ref{CFNLS}) and (\ref{FNLS}) are well-posed on time interval $[0,T]$, satisfying
			\begin{equation*}
				\|u\|_{L^{\infty}([0,T]; H^{\frac{\alpha}{2}}(\mathbb{R}^{d}))}+\|u\|_{L^{q_{0}}\left([0,T];L^{\infty}(\mathbb{R}^{d})\right)}\lesssim \|u_{0}\|_{H^{\frac{\alpha}{2}}(\mathbb{R}^{d})},
			\end{equation*}
			\begin{equation*}
				\|u_{h}\|_{L^{\infty}([0,T]; H_{h}^{\frac{\alpha}{2}})}+\|u_{h}\|_{L^{q_{0}}\left([0,T];L_{h}^{\infty}\right)}\lesssim \|u_{0,h}\|_{H_{h}^{\frac{\alpha}{2}}}.
			\end{equation*}
			Furthermore, we have $T\simeq \|u_{0}\|_{H^{\frac{\alpha}{2}}(\mathbb{R}^{d})}^{-\frac{1}{\frac{1}{p-1}-\frac{1}{q_{0}}}}$.
		\end{lemma}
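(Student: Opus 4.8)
The plan is a contraction--mapping argument in the Strichartz space attached to the admissible pair $(q_0,r_0)$ of Corollary \ref{coro}, with the sole subtlety being that every constant must be kept \emph{independent of $h$}, so that the existence time and all a priori bounds depend only on $\|u_0\|_{H^{\alpha/2}}$. I will run the argument for the discrete equation \eqref{FNLS}; the continuum case is entirely parallel and in fact classical --- it is the well-posedness theory of \cite{4} quoted in Lemma \ref{wellposedness}, with Theorem \ref{uniform} and Corollary \ref{coro} replaced by the corresponding Strichartz estimates for $U(t)=e^{-it(-\Delta)^{\alpha/2}}$ on $\R^{3}$ --- and Lemma \ref{discretization}(I), giving $\|u_{0,h}\|_{H_h^{\alpha/2}}\lesssim\|u_0\|_{H^{\alpha/2}}$ uniformly in $h$, is exactly what lets the two existence times be taken equal.

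Since $q_0>p-1$, set $\theta:=1-\frac{p-1}{q_0}\in(0,1)$ and work on
\[
Y_T:=\Big\{u_h\in C([0,T];H_h^{\alpha/2}):\ \|u_h\|_{L_t^\infty H_h^{\alpha/2}}+\|u_h\|_{L_t^{q_0}L_h^\infty}\le M\Big\}
\]
with the complete metric $d(u_h,v_h):=\|u_h-v_h\|_{L_t^\infty L_h^2}$ (completeness is standard, via weak lower semicontinuity of the stronger norms), and the Duhamel map $\Phi(u_h)(t):=U_h(t)u_{0,h}-i\mu\int_0^t U_h(t-s)\big(|u_h|^{p-1}u_h\big)(s)\,ds$. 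The free term is controlled by the $L^2_h$-isometry property of $U_h(t)$ (which commutes with $\langle\nabla_h\rangle^{\alpha/2}$) together with Corollary \ref{coro}, giving $\|U_h(\cdot)u_{0,h}\|_{L_t^\infty H_h^{\alpha/2}}+\|U_h(\cdot)u_{0,h}\|_{L_t^{q_0}L_h^\infty}\lesssim\|u_{0,h}\|_{H_h^{\alpha/2}}$. For the inhomogeneous term, writing $F=|u_h|^{p-1}u_h$, the energy estimate gives $\big\|\int_0^\cdot U_h(\cdot-s)F(s)\,ds\big\|_{L_t^\infty H_h^{\alpha/2}}\le\|F\|_{L_t^1H_h^{\alpha/2}}$, and Minkowski's integral inequality in $s$ followed by Corollary \ref{coro} applied to each $U_h(\cdot-s)F(s)$ gives $\big\|\int_0^\cdot U_h(\cdot-s)F(s)\,ds\big\|_{L_t^{q_0}L_h^\infty}\lesssim\|F\|_{L_t^1H_h^{\alpha/2}}$ as well --- this sidesteps any inhomogeneous-Strichartz or Christ--Kiselev input.

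To close, I would bound the nonlinearity by the discrete fractional chain/Leibniz rule on $h\Z^3$ --- legitimate, with an $h$-independent constant, because $\tfrac{\alpha}{2}<1$ and $p\ge 3$ make $z\mapsto|z|^{p-1}z$ sufficiently regular (lattice Littlewood--Paley calculus of \cite{5}, \cite{1}) --- obtaining $\|F\|_{H_h^{\alpha/2}}\lesssim\|u_h\|_{L_h^\infty}^{p-1}\|u_h\|_{H_h^{\alpha/2}}$, whence H\"older in time with exponents $\tfrac{q_0}{p-1}$ and $\tfrac1\theta$ yields $\|F\|_{L_t^1H_h^{\alpha/2}}\lesssim T^{\theta}\|u_h\|_{L_t^{q_0}L_h^\infty}^{p-1}\|u_h\|_{L_t^\infty H_h^{\alpha/2}}$; a parallel computation using $\big||u_h|^{p-1}u_h-|v_h|^{p-1}v_h\big|\lesssim\big(|u_h|^{p-1}+|v_h|^{p-1}\big)|u_h-v_h|$ controls $d(\Phi u_h,\Phi v_h)$ by $T^{\theta}\big(\|u_h\|_{L_t^{q_0}L_h^\infty}^{p-1}+\|v_h\|_{L_t^{q_0}L_h^\infty}^{p-1}\big)d(u_h,v_h)$. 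Thus for an absolute constant $C$ and $u_h,v_h\in Y_T$ one has $\|\Phi(u_h)\|_{L_t^\infty H_h^{\alpha/2}}+\|\Phi(u_h)\|_{L_t^{q_0}L_h^\infty}\le C\|u_{0,h}\|_{H_h^{\alpha/2}}+CT^{\theta}M^{p}$ and $d(\Phi u_h,\Phi v_h)\le CT^{\theta}M^{p-1}d(u_h,v_h)$. Choosing $M:=2C\|u_{0,h}\|_{H_h^{\alpha/2}}$ and then $T$ with $CT^{\theta}M^{p-1}\le\tfrac12$, i.e.
\[
T\simeq M^{-(p-1)/\theta}=\|u_{0,h}\|_{H_h^{\alpha/2}}^{-1/\left(\frac{1}{p-1}-\frac{1}{q_0}\right)}\simeq\|u_0\|_{H^{\alpha/2}}^{-1/\left(\frac{1}{p-1}-\frac{1}{q_0}\right)}
\]
(using $\tfrac{p-1}{\theta}=\big(\tfrac1{p-1}-\tfrac1{q_0}\big)^{-1}$ and Lemma \ref{discretization}(I), both with $h$-independent constants), the Banach fixed point theorem produces a unique $u_h\in Y_T$ solving \eqref{FNLS} (consistent with Lemma \ref{wellposedness}) with $\|u_h\|_{L_t^\infty H_h^{\alpha/2}}+\|u_h\|_{L_t^{q_0}L_h^\infty}\le M\lesssim\|u_0\|_{H^{\alpha/2}}$, uniformly in $h\in(0,1]$; running the same scheme on $\R^3$ gives the companion bound for $u$ on the same interval.

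The genuinely delicate part is the $h$-uniformity of the two nonelementary ingredients: the endpoint free-evolution bound $\|U_h(t)u\|_{L_t^{q_0}L_h^\infty}\lesssim\|u\|_{H_h^{\alpha/2}}$, which is Corollary \ref{coro} and hence ultimately Lemma \ref{3.4}; and the $h$-uniform discrete fractional chain rule $\|\,|u_h|^{p-1}u_h\,\|_{H_h^{\alpha/2}}\lesssim\|u_h\|_{L_h^\infty}^{p-1}\|u_h\|_{H_h^{\alpha/2}}$, which I would establish either through the lattice Littlewood--Paley machinery of \cite{5} (the relevant Bernstein and paraproduct estimates on $h\Z^3$ carry constants independent of $h$, and $\tfrac\alpha2<1$, $p\ge3$ give the needed regularity of the nonlinearity) or by rescaling $h\Z^3$ to $\Z^3$ and invoking the $h=1$ case. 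Everything else --- the energy identity, Minkowski, H\"older in time, and the exponent bookkeeping --- is routine.
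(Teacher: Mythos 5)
Your proposal is correct and follows the same route the paper takes: the paper's proof of Lemma \ref{local} is simply the one-line citation of the ``classical contraction mapping theorem'' with a reference to \cite{4} and \cite{8}, and your write-up is precisely the standard Kato-type fixed-point argument in the mixed Strichartz space that those references carry out, with the $h$-uniformity tracked through Corollary \ref{coro} (for the free and Duhamel terms via Minkowski) and through the lattice fractional chain rule of \cite{1,5}. The exponent bookkeeping (Hölder with exponents $q_0/(p-1)$ and $1/\theta$, and the identity $(p-1)/\theta = \left(\frac{1}{p-1}-\frac{1}{q_0}\right)^{-1}$) exactly reproduces the asserted time of existence, so there is nothing to add beyond noting that the one genuinely nontrivial ingredient you flag --- the $h$-independent fractional Leibniz/chain estimate with $\alpha/2<1\le p-2$ --- is indeed the part the paper also outsources.
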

		\begin{proof}
			The above estimates are from classical contraction mapping theorem, which can be referred to \cite{4} and \cite{8}. 
		\end{proof}
		Now we follow the same procedure in \cite{1} or \cite{8}, and give the proof of our main Theorem \ref{main}.
		\begin{proof}[Proof of Theorem \ref{main}]
		    
		  \par From the Lemma \ref{wellposedness} and Lemma \ref{local}, we have unique $u\in C([0,T];H^{\frac{\alpha}{2}}(\mathbb{R}^{d}))$ to equation (\ref{CFNLS}) and unique $u\in C^{1}([0,T];L_{h}^{2})$ to equation (\ref{FNLS}). Furthermore, they satisfy
		  \begin{equation*}
		  	u(t)=U(t)u_{0}-i\mu\int_{0}^{t}U(t-s)\left(|u(s)|^{p-1}u(s)\right) ds,
		  \end{equation*} 
		  \begin{equation*}
		  	u_{h}(t)=U_{h}(t)u_{0,h}-i\mu\int_{0}^{t}U_{h}(t-s)\left(|u_{h}(s)|^{p-1}u_{h}(s)\right) ds.
		  \end{equation*}
		  Then we have 
		  \begin{equation*}
		  p_{h}u_{h}(t)-u(t)=(I)+(II)+(III)+(IV),
		  \end{equation*}
		  where
		  \begin{equation*}
		  	(I)=p_{h}U_{h}(t)u_{0,h}-U(t),
		  \end{equation*}
		  \begin{equation*}
		  	(II)=-i\mu\int_{0}^{t}\left(p_{h}U_{h}(t-s)-U(t-s)p_{h}\right)\left(|u_{h}(s)|^{p-1}u_{h}(s)\right)ds,
		  \end{equation*}
		  \begin{equation*}
		  	(III)=-i\mu \int_{0}^{t}U(t-s)\left(p_{h}(|u_{h}(s)|^{p-1}u_{h}(s))-|p_{h}u_{h}(s)|^{p-1}p_{h}u_{h}(s)\right) ds,
		  \end{equation*}
		  \begin{equation*}
		  	(IV)=-i\mu\int_{0}^{t}U(t-s)\left(|p_{h}u_{h}(s)|^{p-1}p_{h}u_{h}(s)-|u(s)|^{p-1}u(s)\right)ds.
		  \end{equation*}
		  Following the proof of Theorem 1.1 in \cite{1}, we can conclude that
		  \begin{equation*}
		  	\|(I)\|_{L^{2}(\mathbb{R}^{d})}\lesssim h^{\frac{\alpha}{2+\alpha}}\langle t\rangle\|u_{0}\|_{H^{\frac{\alpha}{2}}(\mathbb{R}^{d})},
		  \end{equation*}
		  \begin{equation*}
		  	\|(II)\|_{L^{2}{(\mathbb{R}^{d})}}, \|(III)\|_{L^{2}{(\mathbb{R}^{d})}}\lesssim h^{\frac{\alpha}{2+\alpha}}\langle t\rangle^{2}\|u_{0}\|_{H^{\frac{\alpha}{2}}(\mathbb{R}^{d})}^{p},
		  \end{equation*}
		  \begin{equation*}
		  	\|(IV)\|_{L^{2}{(\mathbb{R}^{d})}}\lesssim \int_{0}^{t}\left(\|u_{h}(s)\|_{L_{h}^{\infty}}+\|u(s)\|_{L^{\infty}(\mathbb{R}^{d})}\right)^{p-1}\|p_{h}u_{h}(s)-u(s)\|_{L^{2}(\mathbb{R}^{d})}ds.
		  \end{equation*}
		  Then the above estimates lead to 
		  \begin{equation*}
		  	\|p_{h}u_{h}(t)-u(t)\|_{L^{2}(\mathbb{R}^{d})}\lesssim h^{\frac{\alpha}{2+\alpha}}\langle t\rangle^{2}\left(\|u_{0}\|_{H^{\frac{\alpha}{2}}(\mathbb{R}^{d})}+\|u_{0}\|_{H^{\frac{\alpha}{2}}(\mathbb{R}^{d})}^{p}\right)
		  \end{equation*}
		  \begin{equation*}
		  	+\int_{0}^{t}\left(\|u_{h}(s)\|_{L_{h}^{\infty}}+\|u(s)\|_{L^{\infty}(\mathbb{R}^{d})}\right)^{p-1}\|p_{h}u_{h}(s)-u(s)\|_{L^{2}(\mathbb{R}^{d})}ds.
		  \end{equation*}
		  Applying Gronwall inequality, we get our desired result.
		\end{proof}
		\begin{rem}
			The above procedure is actually suitable for all dimensions. However, when dimension increases, the derivation of uniform Strichartz estimate (Theorem \ref{uniform}) or Lemma \ref{3.4} gets more and more complicated, which is the major obstacle for extending continuum limit result of (FNLS) to higher dimensions.
		\end{rem}
		\begin{rem}
			Our proof in $H^{\frac{\alpha}{2}}$- regime will fail if the dimension $d\ge 4$. In fact, Theorem 1.1 in \cite{4}, which ensures the local well-posedness of (FNLS) (\ref{CFNLS}) in the $H^{s}$-regime, always requires the conditions that $(p\ge3, s>s_{c})$ or $(p>3,s=s_{c})$. Thus, we should require that 
			\begin{equation*}
				\dfrac{d(p-1)}{p+1}<\alpha<2.
			\end{equation*}
			However, $\frac{d(p-1)}{p+1}\ge2$ and the above range can't be satisfied.
		\end{rem}		
		\section{Proof of Lemma \ref{3.4}}
		In order to prove Lemma \ref{3.4}, it suffices to derive $L_{h}^{\infty}$-estimate of the convolution kernel of operator $U_{h}P_{N}$. Precisely, we have
		\begin{equation*}
			\|U_{h}P_{N}u\|_{L_{h}^{\infty}}=\|K_{t,N.h}\ast u\|_{L_{h}^{\infty}}\le \|K_{t,N,h}\|_{L_{h}^{\infty}}\|u\|_{L_{h}^{1}},
		\end{equation*}
		where convolution kernel $K_{t,N,h}$ is given by
		\begin{equation*}
			K_{t,N,h}=\frac{1}{(2\pi)^{d}}\int_{\frac{\mathbb{T}^{d}}{h}}e^{i\left\lbrace x\cdot\xi-t\left(\frac{4}{h^{2}}\sum_{i=1}^{d}\sin^{2}(\frac{h\xi_{i}}{2})\right)^{\frac{\alpha}{2}}\right\rbrace}\eta(\frac{h\xi}{N})d\xi.
		\end{equation*}
		Change the variable $\xi:=\frac{\xi}{h}$ and denote $\tau:=\frac{2^{\alpha}t}{h^{\alpha}}$, $v:=\frac{x}{h\tau}$, we can further transform it to the following simplified expression
		\begin{equation}\label{simp}
				K_{t,N,h}=\frac{1}{(2\pi h)^{d}}\int_{\mathbb{T}^{d}}e^{i\tau(v\cdot\xi-\omega(\xi))}\eta(\frac{\xi}{N})d\xi=\frac{1}{(2\pi h)^{d}}\int_{\mathbb{T}^{d}}e^{i\tau \Phi_{v}(\xi)}\eta(\frac{\xi}{N})d\xi,
		\end{equation}
		where $\omega(\xi):=\left(\sum_{i=1}^{d}\sin^{2}(\frac{\xi_{i}}{2})\right)^{\frac{\alpha}{2}}$, $\Phi_{v}(\xi):=v\cdot \xi-\omega(\xi)$.
		
		For convenience, we introduce the conventional symbol for oscillatory integral
		\begin{equation}\label{p}
			J_{\Phi_{v},\zeta}(\tau):=\int_{\mathbb{R}^{d}}e^{i\tau\Phi_{v}(\xi)}\zeta(\xi)d\xi.
		\end{equation}
		Thus, the whole estimate reduces to the estimate of oscillatory integral 
		\begin{equation}\label{deal}
		        J_{\Phi_{v}, \eta(\frac{\cdot}{N})}:=\int_{\mathbb{R}^{d}}e^{i\tau\Phi_{v}(\xi)}\eta(\frac{\xi}{N})d\xi.
		\end{equation}
		Precisely, we just need to show $\sup_{v\in\mathbb{R}^{3}}|J_{\Phi, \eta(\frac{\cdot}{N})}|\lesssim N^{3-\alpha}\tau^{-1}$.
		\begin{rem}
		 To transform integral (\ref{simp}), which is on $\mathbb{T}^{d}$, into an $\mathbb{R}^{d}$-integral (\ref{deal}), we can apply the following trick.
		 We first choose a cutoff $\chi\in C_{c}^{\infty}(\mathbb{R}^{d})$, which has the support in $(-2\pi,2\pi)^{d}$ and is non-vanishing on $\mathbb{T}^{d}$. Moreover, we can require such a cutoff satisfies 
		 \begin{equation}\label{q}
		 	\sum_{x\in \Z^{d}}\chi(\xi+2\pi x)=1, \quad \forall \xi\in \mathbb{R}^{d}. 
		 \end{equation}
		 Then periodicly extending $\eta$ and applying the identity (\ref{q}) to integral (\ref{simp}), we obtain
		 \begin{equation*}
		 	\int_{\mathbb{T}^{d}}e^{i\tau \Phi_{v}(\xi)}\eta(\frac{\xi}{N})d\xi=\sum_{x\in \Z^{d}}\int_{\mathbb{T}^{d}}e^{i\tau \Phi_{v}(\xi)}\eta(\frac{\xi}{N})\chi(\xi+2\pi x)d\xi=\int_{\mathbb{R}^{d}}e^{i\tau \Phi_{v}(\xi)}\eta(\frac{\xi}{N})\chi(\xi)d\xi,
		 \end{equation*}
		 which successfully reduces to the $\mathbb{R}^{d}$-integral. Although the integrand has changed, the whole proof below will still work for new integrand. For simplicity, we still regard $\eta(\frac{\cdot}{N})$ as the integrand on $\mathbb{R}^{d}$.
		\end{rem}
		\begin{rem}
			For the convenience of next calculation and without loss of generality, we consider $\omega(\xi)=\left(\sum_{i=1}^{d}2-2\cos(\xi_{i})\right)^{\frac{\alpha}{2}}$, instead of $\left(\sum_{i=1}^{d}\sin^{2}(\frac{\xi_{i}}{2})\right)^{\frac{\alpha}{2}}$.
		\end{rem}
		To estimate the oscillatory integral, we first calculate the derivatives of $\omega(\xi)$.
		\begin{itemize}
			\item \begin{equation*}
				\nabla\omega(\xi)=\frac{\alpha}{\omega(\xi)^{\frac{2-\alpha}{\alpha}}}\left(\sin(\xi_{1}),\cdots,\sin(\xi_{d})\right);
			\end{equation*}
			\item 
			\begin{equation*}
			   Hess_{\xi}\omega:=\left(\omega_{ij}(\xi)\right)_{i,j=1}^{d}=\left(\dfrac{(\alpha-2)\sin(\xi_{i})\sin(\xi_{j})+\delta_{ij}\cos(\xi_{i})\omega(\xi)^{\frac{2}{\alpha}}}{\omega(\xi)^{\frac{4}{\alpha}-1}}\right)_{i,j=1}^{d}.
			\end{equation*}
			
		\end{itemize}
		We call $\xi\in\mathbb{T}^{d}-\lbrace0\rbrace$ is a critical point, if $v=\nabla\omega(\xi)$; $\xi$ is a degenerate point, if $\det Hess_{\xi}\omega(\xi)=0$. And for convenience, we always assume that $\xi\in[0,\pi]^{d}$.
		
		Then we state several primary properties of oscillatory integral (\ref{deal}).
		\begin{thm}
			For $v\in \mathbb{R}^{d}$, we denote $\Omega_{v}$ as
			\begin{equation*}
				\Omega_{v}:=\left\lbrace\xi\in\mathbb{T}^{d}-\lbrace0\rbrace\Big|v=\nabla\omega(\xi)\right\rbrace,
			\end{equation*}
			then we have the following properties of $\Omega_{v}:$
			\begin{itemize}
				\item $\sup_{v\in \mathbb{R}^{d}}\#(\Omega_{v})<\infty$, which implies for any $v$, the number of critical points is uniformly bounded.
				\item For $|v|\gg 1$, $\Omega_{v}=\emptyset$, which implies we only need to deal with $v$ in a bounded set.
			\end{itemize}
		\end{thm}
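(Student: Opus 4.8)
The plan is to handle the two bullets separately, disposing of the unboundedness statement first, since it is elementary and, as a bonus, confines the finiteness statement to a compact set of $v$'s. For the second bullet I would first check that $\nabla\omega$ extends to a continuous function on all of $\mathbb{T}^d$ with $\nabla\omega(0)=0$: on $\mathbb{T}^d\setminus\{0\}$ the function $f(\xi):=\sum_{i=1}^d(2-2\cos\xi_i)$ is positive and smooth, so $\omega=f^{\alpha/2}$ is smooth there, while near the origin the identity $\nabla\omega(\xi)=\alpha f(\xi)^{(\alpha-2)/2}(\sin\xi_1,\dots,\sin\xi_d)$ together with $\sum_i\sin^2\xi_i\le\sum_i(2-2\cos\xi_i)=f(\xi)$ gives $|\nabla\omega(\xi)|\le\alpha f(\xi)^{(\alpha-1)/2}\to 0$ as $\xi\to 0$, because $\alpha>1$. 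By compactness of $\mathbb{T}^d$ we get $M:=\sup_{\mathbb{T}^d}|\nabla\omega|<\infty$, so $\Omega_v=\emptyset$ as soon as $|v|>M$; this also shows that for the first bullet we may assume $v\in\overline{B}_{\mathbb{R}^{d}}(0,M)$.

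For the first bullet I would use the coordinatewise reflection symmetries $\xi_i\mapsto-\xi_i$ of $\omega$ to reduce to bounding $\#\bigl(\Omega_v\cap[0,\pi]^d\bigr)$, at the cost of a factor $2^d$. On $[0,\pi]^d$ every $\sin\xi_i\ge 0$, so $v=\nabla\omega(\xi)=\alpha f(\xi)^{(\alpha-2)/2}(\sin\xi_i)_i$ forces $v\in[0,\infty)^d$ and, when $v\ne 0$, displays $(\sin\xi_i)_i$ as a nonnegative multiple $\lambda\hat v$ of $\hat v:=v/|v|$, with $\lambda=|(\sin\xi_i)_i|\in(0,\lambda^*]$, $\lambda^*:=1/\max_i\hat v_i$. (If $v=0$ then $\sin\xi_i=0$ for all $i$ and $\Omega_0\subseteq\{0,\pi\}^d$ is finite, so assume $v\ne0$.) Having fixed $\lambda$, each $\xi_i$ with $\sin\xi_i=\lambda\hat v_i$ is determined up to the binary choice $\cos\xi_i=\epsilon_i\sqrt{1-\lambda^2\hat v_i^2}$, $\epsilon_i\in\{\pm1\}$; then $f(\xi)=L_\epsilon(\lambda):=2d-2\sum_{i=1}^d\epsilon_i\sqrt{1-\lambda^2\hat v_i^2}$, and the remaining magnitude constraint $\alpha f(\xi)^{(\alpha-2)/2}=|v|/\lambda$ becomes a single scalar equation in $\lambda$,
\begin{equation*}
  L_\epsilon(\lambda)=c\,\lambda^{\beta},\qquad c:=(\alpha/|v|)^{2/(2-\alpha)}>0,\quad \beta:=\tfrac{2}{2-\alpha}>2 .
\end{equation*}
Thus $\Omega_v\cap[0,\pi]^d$ is covered by at most $2^d$ sets, one per sign pattern $\epsilon$, each of which injects into the solution set in $\lambda\in(0,\lambda^*]$ of such an equation.

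It then remains to bound, uniformly in $v$, the number of solutions of $L_\epsilon(\lambda)=c\lambda^\beta$. I would route this through o-minimality: $\omega$, hence $\nabla\omega$, hence $\{(\xi,v):\xi\in\mathbb{T}^d\setminus\{0\},\ \nabla\omega(\xi)=v\}$, is definable in an o-minimal structure containing the restricted analytic functions and the power map $x\mapsto x^{\alpha/2}$, so $\{\Omega_v\}_{v\in\mathbb{R}^d}$ is a definable family; and in an o-minimal structure a definable family all of whose members are finite has uniformly bounded cardinality. Hence it suffices to show each $\Omega_v$ is finite, i.e.\ contains no real-analytic arc. Along such an arc, after passing to a subarc, the sign pattern $\epsilon$ is constant and $\lambda$ is non-constant (if $\lambda$ were constant all the $\sin\xi_i$ would be, forcing the arc to be constant), so $L_\epsilon(\lambda)\equiv c\lambda^\beta$ on a nondegenerate $\lambda$-interval, hence on $(0,\lambda^*)$ by analyticity of both sides; but $L_\epsilon(\lambda)\to L_\epsilon(0)=4\,\#\{i:\epsilon_i=-1\}$ as $\lambda\to0^+$ while $c\lambda^\beta\to0$, and when $\epsilon=(1,\dots,1)$ one has instead $L_\epsilon(\lambda)=|\hat v|^2\lambda^2+O(\lambda^4)=\lambda^2+O(\lambda^4)$ against $c\lambda^\beta=o(\lambda^2)$ since $\beta>2$ — in either case no identity, a contradiction. (One can also avoid o-minimality and bound the number of zeros directly by a Rolle-type count applied to $\lambda\mapsto L_\epsilon(\lambda)\lambda^{-\beta}$, with the same non-vanishing input; the o-minimal route is used only to make the bound uniform in $v$ painlessly.)

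The main obstacle is precisely this last point — excluding an arc of (necessarily degenerate) critical points along which $\nabla\omega$ is constant, equivalently showing the one-variable equation $L_\epsilon(\lambda)=c\lambda^\beta$ is never an identity — together with turning the resulting cardinality bound into one uniform over all $v$. The o-minimality framework reduces the uniform statement to this single non-degeneracy check, after which the remaining ingredients (the reflection reduction, the substitution $\cos\xi_i=\epsilon_i\sqrt{1-\lambda^2\hat v_i^2}$, and the leading-order comparison at $\lambda=0$) are routine.
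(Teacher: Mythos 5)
Your second bullet fleshes out the paper's one-line ``$|\nabla\omega|$ is bounded'' observation and matches it. For the first bullet you and the paper share the same core reduction --- the critical-point equation $v=\nabla\omega(\xi)$ forces $(\sin\xi_i)_i$ to be a scalar multiple of $v$, collapsing the $d$-variable system to a single scalar equation in one variable (your $\lambda$, the paper's $y=k^2$, with $y\approx\lambda^2$) --- but you diverge in how that is turned into a uniform count. The paper runs Rolle directly: it differentiates both sides of $y^{1/(2-\alpha)}=\sum_j\bigl(2-2\sqrt{1-a_jy}\bigr)$ a carefully chosen number of times so that the power-function derivative becomes sign-definite and opposite in sign to the derivative of the right-hand side (modulo a small off-by-one in the stated derivative order), leaving at most one root. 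You instead outsource uniformity to o-minimality: definability of $\{(\xi,v):\nabla\omega(\xi)=v\}$ in $\mathbb{R}_{\mathrm{an},\exp}$ (or $\mathbb{R}_{\mathrm{an}}^{\mathrm{pow}}$) plus uniform finiteness reduces everything to pointwise finiteness, which you check by comparing the leading orders of $L_\epsilon(\lambda)$ and $c\lambda^\beta$ as $\lambda\to0^+$. Your route is cleaner and avoids the high-derivative sign bookkeeping, at the cost of importing a heavy external theorem; the paper's is self-contained and more elementary. You are also more careful on two points the paper glosses over: you track the binary sign choice $\cos\xi_i=\epsilon_i\sqrt{1-\lambda^2\hat v_i^2}$ (the paper's substitution $\cos\xi_j=\sqrt{1-x_j^2}$ tacitly restricts to $\xi_j\le\pi/2$, so as written its scalar equation misses critical points with some $\cos\xi_j<0$), and you correctly note $\Omega_0$ is finite but nonempty, where the paper says $\emptyset$ (wrong, though harmless). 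Your sketched Rolle alternative on $\lambda\mapsto L_\epsilon(\lambda)\lambda^{-\beta}$ would remove the o-minimality dependence and land essentially on the paper's argument, with the sign defect repaired.
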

		\begin{proof}
			For the first statement, we notice that if $v=0$, then obviously $\Omega_{v}=\emptyset$, and the statement holds. Thus, we suppose $v\ne 0$ and denote  $v=(v_{1},\cdots,v_{d})$, $x_{i}:=\sin(\xi_{i})$, $i=1,\cdots,d$. Then we have 
			\begin{equation*}
				 k=\left(\sum_{j=1}^{d}2-2\sqrt{1-x_{j}^{2}}\right)^{\frac{2-\alpha}{2}},\; x_{i}=kv_{i}, \; i=1,\cdots,d.
			\end{equation*}
			Let $y:=k^{2}, a_{i}:=v_{i}^{2}$, the problem reduces to show for all $a_{i}\ge 0$, with at least one $a_{i}>0$, the number of roots in the following equation is uniformly bounded
			\begin{equation}\label{equation}
				y^{\frac{1}{2-\alpha}}=\sum_{j=1}^{d}2-2\sqrt{1-a_{i}y}.
			\end{equation} 
			If not, for any $m\in\mathbb{N}$, there is $\lbrace a_{i}^{(m)}\rbrace_{i=1}^{d}$, s.t. according equation (\ref{equation}) has at least $m$ roots. Using Rolle's theorem, we see that for any $k, m\in\mathbb{N} $, there is $\lbrace a_{i}^{(m,k)}\rbrace_{i=1}^{d}$, s.t. following equation has at least $m$ roots
			\begin{equation}\label{y}
				\frac{d^{k}}{dy^{k}}\left(y^{\frac{1}{2-\alpha}}\right)=\frac{d^{k}}{dy^{k}}\left(\sum_{j=1}^{d}2-2\sqrt{1-a_{j}^{(m,k)}y}\right).
			\end{equation}
			Simple calculation shows that every order derivatives of $-\sqrt{1-a_{i}y}, a_{i}>0$ are positive. Next we take $k$ s.t. $k-1\le\frac{1}{2-\alpha}< k$, then the LHS is always non-positive and the RHS is always positive. Thus the equation (\ref{y}) has at most one root, which leads to a contradiction. This finishes the first statement.
			
			For the second statement, we just need to notice that $|\nabla\omega(\xi)|$ is bounded.
		\end{proof}
		\begin{thm}
			$\xi$ is a degenerate point if and only if
			\begin{equation}\label{i}
				\prod_{i=1}^{d}\cos(\xi_{i})-\frac{2-\alpha}{\omega(\xi)^{\frac{2}{\alpha}}}\sum_{i=1}^{d}\left(\sin^{2}(\xi_{i})\prod_{j\ne i}\cos(\xi_{j})\right)=0,
		    \end{equation}
		    or equivalently, fells into one of the situations below
		     \begin{itemize}
		    		\item $\Gamma_{1}:=\left\lbrace \xi\in\mathbb{T}^{d}-\lbrace0\rbrace \Big| 2d=\sum_{i=1}^{d}(2-\alpha)\sec(\xi_{i})+\alpha\cos(\xi_{i})\right\rbrace;$
		    		\item $\Gamma_{k}:=\left\lbrace \xi\in\mathbb{T}^{d}-\lbrace0\rbrace \Big| \xi \; has \; exactly \; k \; components \; equal \; to \; \frac{\pi}{2} \right\rbrace$, \quad $k=2,\cdots,d.$
		     \end{itemize}
		     Moreover we have 
		     \begin{equation}\label{o}
		     	\min_{\xi\in\Gamma_{1}}|\xi|:=r_{\alpha}>0.
		     \end{equation}
		 \end{thm}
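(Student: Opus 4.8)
My plan is to proceed in three steps: first read off $\det\!\big(Hess_{\xi}\omega\big)$ from the rank-one structure of the Hessian, then determine when $(\ref{i})$ holds by a case analysis on how many coordinates of $\xi$ equal $\tfrac{\pi}{2}$, and finally extract $(\ref{o})$ from a Taylor expansion of the $\Gamma_1$-defining function at the origin.

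\emph{Step 1 (the determinant).} Write $W:=\omega(\xi)^{2/\alpha}=\sum_{i=1}^{d}\big(2-2\cos\xi_i\big)$, $\mathbf s:=(\sin\xi_1,\dots,\sin\xi_d)^{\top}$ and $D:=\mathrm{diag}(\cos\xi_1,\dots,\cos\xi_d)$. From the displayed formula for $Hess_{\xi}\omega$, after factoring out the positive scalar $\omega(\xi)^{-(4/\alpha-1)}$ (finite for $\xi\neq0$, where also $W>0$), the point $\xi$ is degenerate precisely when $\det\!\big(WD+(\alpha-2)\mathbf s\mathbf s^{\top}\big)=0$. I would then apply the matrix determinant lemma $\det(A+\mathbf u\mathbf v^{\top})=\det A+\mathbf v^{\top}\mathrm{adj}(A)\,\mathbf u$ with $A=WD$, for which the $i$-th diagonal entry of $\mathrm{adj}(WD)$ is $W^{d-1}\prod_{j\neq i}\cos\xi_j$; this gives
\begin{equation*}
\det\!\big(WD+(\alpha-2)\mathbf s\mathbf s^{\top}\big)=W^{d}\prod_{i=1}^{d}\cos\xi_i+(\alpha-2)\,W^{d-1}\sum_{i=1}^{d}\Big(\sin^{2}\xi_i\prod_{j\neq i}\cos\xi_j\Big).
\end{equation*}
Dividing by $W^{d-1}>0$ and then by $W$ yields exactly $(\ref{i})$.

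\emph{Step 2 (the sets $\Gamma_k$).} Since we may assume $\xi\in[0,\pi]^{d}$, $\cos\xi_i=0$ holds iff $\xi_i=\tfrac{\pi}{2}$. If $\xi$ has at least two coordinates equal to $\tfrac{\pi}{2}$, then $\prod_i\cos\xi_i=0$ and each $\prod_{j\neq i}\cos\xi_j=0$, so $(\ref{i})$ holds automatically; this is $\bigcup_{k=2}^{d}\Gamma_k$. If exactly one coordinate $\xi_{i_0}$ equals $\tfrac{\pi}{2}$, then $\prod_i\cos\xi_i=0$ but $\sum_i\sin^{2}\xi_i\prod_{j\neq i}\cos\xi_j=\prod_{j\neq i_0}\cos\xi_j\neq0$, so $(\ref{i})$ fails (here $\alpha\neq2$ is used). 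If no coordinate equals $\tfrac{\pi}{2}$, I would divide $(\ref{i})$ by $\prod_i\cos\xi_i$, substitute $\sin^{2}\xi_i/\cos\xi_i=\sec\xi_i-\cos\xi_i$ and $W=\sum_i(2-2\cos\xi_i)$, and rearrange to $2d=\sum_{i=1}^{d}\big((2-\alpha)\sec\xi_i+\alpha\cos\xi_i\big)$, i.e. $\xi\in\Gamma_1$. Combining the three cases gives the asserted equivalence; note that $\Gamma_1$ automatically consists of points with all $\cos\xi_i\neq0$, since $\sec$ appears in its definition.

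\emph{Step 3 (positivity of $r_\alpha$).} Set $f(t):=(2-\alpha)\sec t+\alpha\cos t$, so $f(0)=2$, $f'(0)=0$, $f''(0)=2(1-\alpha)$, whence $f(t)=2+(1-\alpha)t^{2}+O(t^{4})$ near $0$ and therefore $\sum_{i=1}^{d}f(\xi_i)-2d=(1-\alpha)\,|\xi|^{2}+O(|\xi|^{4})$ as $\xi\to0$. Since $\alpha\in(1,2)$ one has $1-\alpha<0$, so there is $r>0$ with $\sum_i f(\xi_i)-2d\neq0$ for $0<|\xi|<r$; hence $\Gamma_1\cap\big(B_{\mathbb{R}^{d}}(0,r)\setminus\{0\}\big)=\emptyset$ and $\inf_{\xi\in\Gamma_1}|\xi|\ge r>0$. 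For attainment of the minimum I would take a minimizing sequence in the compact torus and pass to a subsequential limit $\xi^{\ast}$ with $|\xi^{\ast}|\ge r>0$: if all $\cos\xi_i^{\ast}\neq0$ then $\xi^{\ast}\in\Gamma_1$ by continuity of the defining relation, and if some coordinate of $\xi^{\ast}$ equals $\tfrac{\pi}{2}$ this only increases $|\xi^{\ast}|$, so a minimizer exists. (Incidentally $\alpha=1$ is exactly the borderline excluded case: there the quadratic term vanishes and one would have to pass to the next order, which is why $(\ref{o})$ genuinely uses $\alpha\neq1$.)

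The main obstacle is the bookkeeping in Step 2, where the coordinates with $\cos\xi_i=0$ must be separated off before any division by $\prod_i\cos\xi_i$ and $(\ref{i})$ has to be verified directly in those degenerate configurations; once that is settled, Step 1 is purely algebraic and Step 3 is a short Taylor estimate, the only point to check being that the quartic remainder cannot cancel the quadratic leading term near the origin.
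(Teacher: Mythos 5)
Your proof is correct and follows essentially the same route as the paper's: write the Hessian in diagonal-plus-rank-one form, apply the matrix determinant lemma to produce equation (\ref{i}), split into cases according to how many coordinates equal $\tfrac{\pi}{2}$ to obtain the $\Gamma_k$ description, and expand $(2-\alpha)\sec t+\alpha\cos t$ at $0$ (leading correction $(1-\alpha)t^{2}$, nonzero since $\alpha\neq1$) to get (\ref{o}). The paper states all of this extremely tersely, but the underlying argument is the same; your version simply fills in the algebra and the Taylor-remainder bookkeeping.
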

		 \begin{proof}
		 	From the calculation of Hessian matrix, we see that
		 	\begin{equation*}
		 		Hess_{\xi}\omega(\xi)=\frac{1}{\omega(\xi)^{\frac{2}{\alpha}-1}}
		 		\left(\begin{bmatrix}
		 			\cos(\xi_{1}) & &  \\
		 			 
		 			               &\ddots& \\
		 			              &&\cos(\xi_{d}) \\
		 			
		 		\end{bmatrix}-
		 		\frac{2-\alpha}{\omega(\xi)^{\frac{2}{\alpha}}}
		 		\begin{bmatrix}
		 		    \sin(\xi_{1})\\
		 		    
		 		    \vdots \\
		 		    \sin(\xi_{d})\\
		 		\end{bmatrix} \cdot
		 		\begin{bmatrix}
		 			\sin(\xi_{1})&\cdots&\sin(\xi_{d})\\
		 		\end{bmatrix}\right).
		 	\end{equation*}
		 	Using knowledge from linear algebra, we see the determinant of matrix in bracket is equal to LHS of (\ref{i}). Thus the classification of all degenerate points comes directly.
		 	
		 	Next, we prove the statement (\ref{o}). It's suffices to show that the maximum of function $(2-\alpha)\sec(y)+\alpha\cos(y)$ on the neighborhood of $0$, can only be obtained at $y=0$, which is very trivial. 
		 \end{proof}
		 Following the notation in Appendix, we first need to derive the uniform estimate of oscillatory integral (\ref{deal}).
		 \begin{lemma}\label{g}
		 	For $d=3$, $\xi^{\ast}\in \mathbb{T}^{3}-\lbrace0\rbrace$, $v_{\xi^{\ast}}=\nabla\omega(\xi^{\ast})$, we have the following estimates
		 	\begin{itemize}
		 		\item (I):  If $\xi^{\ast}$ is non-degenerate, then $M(\Phi_{v_{\xi^{\ast}}},\xi^{\ast})\curlyeqprec(-\frac{3}{2},0);$
		 		\item (II):  If $\xi^{\ast}\in \Gamma_{3}$, then $M(\Phi_{v_{\xi^{\ast}}},\xi^{\ast})\curlyeqprec(-\frac{7}{6},0);$
		 		\item (III):  If $\xi^{\ast}\in \Gamma_{2}$, then $M(\Phi_{v_{\xi^{\ast}}},\xi^{\ast})\curlyeqprec(-\frac{5}{4},0);$
		 		\item (IV):  If $\xi^{\ast}\in \Gamma_{1}$, then $M(\Phi_{v_{\xi^{\ast}}},\xi^{\ast})\curlyeqprec(-\frac{4}{3},0).$
		 	\end{itemize}
		 \end{lemma}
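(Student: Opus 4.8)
The plan is to treat each point type by the classical stationary-phase / splitting-lemma reduction, writing the local oscillation index of $\Phi_{v_{\xi^{\ast}}}$ as that of a nondegenerate quadratic plus that of a reduced phase in the kernel variables, and then to read off the latter from its Newton polyhedron via the uniform oscillatory-integral estimate recalled in the Appendix.

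First I would record that $\nabla\Phi_{v_{\xi^{\ast}}}(\xi^{\ast})=v_{\xi^{\ast}}-\nabla\omega(\xi^{\ast})=0$, so $\xi^{\ast}$ is a critical point and the Hessian there is $-Hess_{\xi}\omega(\xi^{\ast})=-\omega(\xi^{\ast})^{1-2/\alpha}\bigl(D-c_{\alpha}ss^{T}\bigr)$ with $D=\mathrm{diag}(\cos\xi_{i}^{\ast})$, $s=(\sin\xi_{i}^{\ast})$, $c_{\alpha}=(2-\alpha)\omega(\xi^{\ast})^{-2/\alpha}>0$. In case (I), $\det\neq0$ and Morse's lemma gives a nondegenerate quadratic, Newton distance $1/2$, $\mathbb{R}$-nondegenerate principal part, so $M(\Phi_{v_{\xi^{\ast}}},\xi^{\ast})\curlyeqprec(-3/2,0)$. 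In the degenerate cases I would read the corank and kernel off the matrix above: at $\Gamma_{3}$ one has $D=0$ and the matrix is $\propto\mathbf{1}\mathbf{1}^{T}$, so corank $2$ with kernel $\mathbf{1}^{\perp}$; at $\Gamma_{2}$ two rows coincide, so corank $1$ with kernel spanned by $e_{i}-e_{j}$ (the components equal to $\pi/2$); at $\Gamma_{1}$ all $\cos\xi_{i}^{\ast}\neq0$, so by the matrix-determinant lemma $\det(D-c_{\alpha}ss^{T})=\det D\,(1-c_{\alpha}s^{T}D^{-1}s)$ vanishes with corank exactly $1$ and kernel spanned by $D^{-1}s=(\tan\xi_{i}^{\ast})$. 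The splitting lemma then makes $\Phi_{v_{\xi^{\ast}}}$ equivalent near $\xi^{\ast}$ to $Q(z)+g(y)$, with $Q$ a nondegenerate quadratic in the $3-c$ transverse variables and $g$ flat to order $\ge3$ in the $c$ kernel variables, and the oscillation index equals $-\tfrac{3-c}{2}$ plus that of $g$.

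It remains to identify $g$ in each case, using $\omega=(\sum_{i}(2-2\cos\xi_{i}))^{\alpha/2}$. At $\Gamma_{3}$ ($\xi^{\ast}=(\tfrac{\pi}{2},\tfrac{\pi}{2},\tfrac{\pi}{2})$, $c=2$) the cubic part of $\Phi_{v_{\xi^{\ast}}}$ on $\mathbf{1}^{\perp}$ is a nonzero multiple of $(\zeta_{1}^{3}+\zeta_{2}^{3}+\zeta_{3}^{3})|_{\zeta_{1}+\zeta_{2}+\zeta_{3}=0}=3\zeta_{1}\zeta_{2}\zeta_{3}|_{\mathbf{1}^{\perp}}$, an $\mathbb{R}$-nondegenerate binary cubic of oscillation index $-2/3$, hence $M\curlyeqprec(-\tfrac12-\tfrac23,0)=(-\tfrac76,0)$. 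At $\Gamma_{2}$ ($c=1$) the restriction of $\Phi_{v_{\xi^{\ast}}}$ to the kernel line is constant (forced by the symmetry $\xi_{i}\leftrightarrow\xi_{j}$, under which $v_{\xi^{\ast}}$ is also symmetric), so $g$ starts at order $\ge4$; carrying out the splitting, the quartic coefficient is $\propto(\partial_{u}^{2}\partial_{w}\Phi_{v_{\xi^{\ast}}})^{2}$, which is nonzero because the Taylor expansion of $\omega$ at $\xi^{\ast}$ contains a nonzero $w\,u^{2}$ monomial ($w=a+b$, $u=a-b$ the first two coordinates, coming from $-\tfrac13(a^{3}+b^{3})=-\tfrac1{12}w^{3}-\tfrac14 wu^{2}$ in $g$); thus $g\sim a y^{4}$ and $M\curlyeqprec(-1-\tfrac14,0)=(-\tfrac54,0)$. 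At $\Gamma_{1}$ ($c=1$) no symmetry helps, and the cubic coefficient of $g$ equals $-\partial_{w_{0}}^{3}\omega(\xi^{\ast})$ with $w_{0}=(\tan\xi_{1}^{\ast},\tan\xi_{2}^{\ast},\tan\xi_{3}^{\ast})$; using the defining relation of $\Gamma_{1}$ to eliminate $\sum_{i}\sin^{2}\xi_{i}^{\ast}/\cos\xi_{i}^{\ast}$ one obtains
\[
\partial_{w_{0}}^{3}\omega(\xi^{\ast})=\frac{\alpha}{2}\,g_{0}^{\frac{\alpha}{2}-1}\Bigl(\frac{4(\alpha-1)}{(2-\alpha)^{2}}\,g_{0}-2\sum_{i}\frac{\sin^{4}\xi_{i}^{\ast}}{\cos^{3}\xi_{i}^{\ast}}\Bigr),\qquad g_{0}:=\sum_{i}(2-2\cos\xi_{i}^{\ast})>0 ,
\]
which I would show is nonzero for every $\xi^{\ast}\in\Gamma_{1}$ and every $\alpha\in(1,2)$ (at the symmetric point $\cos\xi_{i}^{\ast}=\tfrac{2-\alpha}{\alpha}$ it equals $-\tfrac{2\alpha^{2}(\alpha-1)}{(2-\alpha)^{3}}g_{0}^{\alpha/2}<0$); hence $g\sim a y^{3}$ and $M\curlyeqprec(-1-\tfrac13,0)=(-\tfrac43,0)$. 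In each degenerate case the second entry of $M$ is $0$ because the reduced $g$ is (after an adapted linear change of coordinates in the $\Gamma_{3}$ case) $\mathbb{R}$-nondegenerate with principal face a compact facet, so no logarithmic factor appears.

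The main obstacle is the nonvanishing of the leading coefficient of $g$ on all of $\Gamma_{1}$: at $\Gamma_{2}$ and $\Gamma_{3}$ symmetry plus a short computation settle it once and for all, but on the hypersurface $\Gamma_{1}$ a zero of the cubic coefficient would degrade the bound to $(-\tfrac54,0)$ or worse and break (IV), so one must prove that the displayed trigonometric expression never attains the critical value subject to $g_{0}=(2-\alpha)\sum_{i}\sin^{2}\xi_{i}^{\ast}/\cos\xi_{i}^{\ast}$. I expect this to require a case analysis on the signs of the $\cos\xi_{i}^{\ast}$ (equivalently, which $\xi_{i}^{\ast}$ exceed $\pi/2$), together with the convexity/monotonicity of $\theta\mapsto(2-\alpha)\sec\theta+\alpha\cos\theta$ already used to classify $\Gamma_{1}$. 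A secondary, routine point is that the lemma is phrased at the single phase $\Phi_{v_{\xi^{\ast}}}$ while Lemma \ref{3.4} needs the bound uniformly for $v$ near $v_{\xi^{\ast}}$; this stability under perturbation of the phase is exactly what the Appendix's uniform oscillatory-integral estimate provides once the Newton-polyhedron data above is in place.
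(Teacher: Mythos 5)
Your plan matches the paper's proof in all essentials: at each type of critical point, apply a linear change of variables adapted to the kernel of the Hessian, split off the nondegenerate quadratic part in the transverse directions via the appendix Lemma~\ref{Q}, discard the higher $\gamma$-degree remainder by Lemma~\ref{E}, and read off the oscillation index of the residual kernel-direction phase from its Newton polyhedron via Lemma~\ref{-2/3}. The paper carries this out with explicit matrices $A$ and weights $\gamma$ rather than the abstract splitting lemma, but the content is identical: at $\Gamma_{3}$ the reduced binary cubic is $\zeta_1\zeta_2\zeta_3|_{\mathbf{1}^{\perp}}\propto b\zeta_1^{2}\zeta_2+c\zeta_1\zeta_2^{2}$, giving $(-\tfrac12,0)+(-\tfrac23,0)$; at $\Gamma_{2}$ the residual $a\zeta_2^{2}+d\zeta_1^{2}\zeta_2+\lambda\zeta_1^{4}$ gives $(-\tfrac34,0)$; at $\Gamma_{1}$ the residual $a\zeta_2^{2}+d\zeta_1^{3}$ gives $(-\tfrac12,0)+(-\tfrac13,0)$. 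One small gain in the paper's route, which you could adopt, is that the Newton-polyhedron estimate $M(\xi_1^{2}\xi_2\pm\xi_2^{2}\pm\xi_1^{4})\curlyeqprec(-\tfrac34,0)$ is stated for $\lambda$ uncertain, so you need not prove nonvanishing of the effective quartic in case~(III): the polyhedron of $\xi_1^{2}\xi_2\pm\xi_2^{2}$ alone already has $d_{S}=\tfrac43$, and adding a $\xi_1^{4}$ vertex (with any sign) leaves the principal face unchanged.

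The one place where your write-up flags something the paper silently assumes is the nonvanishing of the cubic coefficient $d$ at every $\xi^{\ast}\in\Gamma_{1}$. The paper simply states ``$d\ne 0$'' after the Taylor expansion, without argument. Your computation of $\partial_{w_0}^{3}\omega(\xi^{\ast})$ and the check at the symmetric point $\cos\xi_i^{\ast}=\tfrac{2-\alpha}{\alpha}$ are both correct, and you are right that a priori the expression
\[
\frac{4(\alpha-1)}{(2-\alpha)^{2}}\,g_{0}-2\sum_{i}\frac{\sin^{4}\xi_i^{\ast}}{\cos^{3}\xi_i^{\ast}}
=2\sum_i\frac{\sin^{2}\xi_i^{\ast}}{\cos\xi_i^{\ast}}\left[\frac{2(\alpha-1)}{2-\alpha}-\tan^{2}\xi_i^{\ast}\right]
\]
has no evident sign on all of $\Gamma_{1}$ once some $\cos\xi_i^{\ast}$ are allowed to be negative; a zero would indeed degrade the index to $-\tfrac54$ and break~(IV). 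So this point of your plan that you label ``the main obstacle'' is genuinely open, both in your proposal and in the paper's proof, and should be resolved (for instance by the case analysis on the signs of $\cos\xi_i^{\ast}$ that you sketch, or by showing that a vanishing cubic coefficient together with the $\Gamma_{1}$ relation forces $\xi^{\ast}=0$).
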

		 \begin{proof}
		 	For (I), we just need to take a linear transform $\xi:=A\zeta+\xi^{\ast}$, where $A$ satisfies
		 	\begin{equation*}
		 		A^{T}\cdot Hess_\xi{\omega(\xi^{\ast})}\cdot A= diag\lbrace c_{1},c_{2},c_{3}\rbrace, \; c_{i}\ne 0, \forall i=1,2,3.	 		
		 	\end{equation*}
		 	Then  we have the Taylor series at $0$ as follows 
		 	\begin{equation*}
		 		\Phi_{v_{\xi^{\ast}}}(A\zeta+\xi^{\ast})=\left(v_{\xi^{\ast}}\cdot\xi^{\ast}-\omega(\xi^{\ast})\right)+\left(c_{1}\zeta_{1}^{2}+c_{2}\zeta_{2}^{2}+c_{3}\zeta_{3}^{2}\right)+O(|\zeta|^{3})
		 	\end{equation*}
		 	\begin{equation*}
		 		:=r_{\ast}+Q(\zeta)+O(|\zeta|^{3}).
		 	\end{equation*}
		 	Take $\gamma:=(\frac{1}{2},\frac{1}{2},\frac{1}{2})$, we see that $Q(\zeta)\in \mathcal{E}_{\gamma,3}$, $O(|\zeta|^{3})\in H_{\gamma,3}$. Next, we can use the Lemma \ref{E} and Lemma \ref{Q} in Appendix, and  derive 
		 	\begin{equation*}
		 		M(\Phi_{v_{\xi^{\ast}}},\xi^{\ast})\curlyeqprec M(c_{1}\zeta_{1}^{2}+c_{2}\zeta_{2}^{2}+c_{3}\zeta_{3}^{2})
		 	\end{equation*}
		 	\begin{equation*}
		 		\curlyeqprec(-\frac{1}{2},0)+(-\frac{1}{2},0)+(-\frac{1}{2},0)\curlyeqprec(-\frac{3}{2},0).
		 	\end{equation*}
		 	
		 	For (II), $\xi^{\ast}$=$(\frac{\pi}{2},\frac{\pi}{2},\frac{\pi}{2})$. Then we consider the linear transform $\xi:=A\zeta+\xi^{\ast}$, where
		 	\begin{equation*}
		 		A=
		 		\begin{bmatrix}
		 			1&0&-1\\
		 			0&1&-1\\
		 			0&0&1\\
		 		\end{bmatrix}.
		 	\end{equation*}
		 	Then we have the Taylor series at $0$ as follows
		 	\begin{equation*}
		 		\Phi_{v_{\xi^{\ast}}}(A\zeta+\xi^{\ast})=\left(v_{\xi^{\ast}}\cdot\xi^{\ast}-\omega(\xi^{\ast})\right)+ (a\zeta_{3}^{2}+b\zeta_{1}^{2}\zeta_{2}+c\zeta_{1}\zeta_{2}^{2})+R(\zeta)
		 	\end{equation*}
		 	\begin{equation*}
		 		:=r_{\ast}+Q(\zeta)+R(\zeta),
		 	\end{equation*}
		 	where  $R(\zeta)$ contains no $\zeta_{1}^{2}, \zeta_{2}^{2}$-terms and $a,b,c\ne0$. Take $\gamma:=(\frac{1}{3},\frac{1}{3},\frac{1}{2})$, we see that $Q(\zeta)\in \mathcal{E}_{\gamma,3}$, $R(\zeta)\in H_{\gamma,3}$. Applying the Lemma \ref{E}, Lemma \ref{Q} and Lemma \ref{-2/3} in Appendix, and derive
		 	\begin{equation*}
		 		M(\Phi_{v_{\xi^{\ast}}},\xi^{\ast})\curlyeqprec M(a\zeta_{3}^{2}+b\zeta_{1}^{2}\zeta_{2}+c\zeta_{1}\zeta_{2}^{2})
		 	\end{equation*}
		 	\begin{equation*}
		 		\curlyeqprec(-\frac{1}{2},0)+M(b\zeta_{1}^{2}\zeta_{2}+c\zeta_{1}\zeta_{2}^{2})\curlyeqprec(-\frac{1}{2},0)+(-\frac{2}{3},0)\curlyeqprec(-\frac{7}{6},0).
		 	\end{equation*}
		 	
		 	For (III), we suppose, without loss of generality, that $\xi^{\ast}=(\frac{\pi}{2},\frac{\pi}{2},\xi_{0})$, with $\xi_{0}\ne \frac{\pi}{2}$.
		 	Then we consider the linear transform $\xi:=A\zeta+\xi^{\ast}$, where
		 	\begin{equation*}
		 		A=
		 		\begin{bmatrix}
		 			1&0&0\\
		 			-1&1&0\\
		 			0&0&1\\
		 		\end{bmatrix}.
		 	\end{equation*}
		 	Then we have the Taylor series at $0$ as follows
		 	\begin{equation*}
		 		\Phi_{v_{\xi^{\ast}}}(A\zeta+\xi^{\ast})=\left(v_{\xi^{\ast}}\cdot\xi^{\ast}-\omega(\xi^{\ast})\right)+ (a\zeta_{2}^{2}+b\zeta_{2}\zeta_{3}+c\zeta_{3}^{2}+d\zeta_{1}^{2}\zeta_{2}+\lambda\zeta_{1}^{4})+R(\zeta)
		 	\end{equation*}
		 	\begin{equation*}
		 		:=r_{\ast}+Q(\zeta)+R(\zeta),
		 	\end{equation*}
		 	where $R(\zeta)$ contains no $\zeta_{1}^{2},\zeta_{1}^{3}$-terms and $d\ne0$, $b^{2}\ne 4ac$ ($\lambda$ is uncertain). Then we can rotate $\zeta_{2}\zeta_{3}$-plane and eliminate $\zeta_{2}\zeta_{3}$-term. Take $\gamma:=(\frac{1}{4},\frac{1}{2},\frac{1}{2})$, we see that $Q(\zeta)\in \mathcal{E}_{\gamma,3}$, $R(\zeta)\in H_{\gamma,3}$. Applying the Lemma \ref{E}, Lemma \ref{Q} and Lemma \ref{-2/3} in Appendix, and derive 
		 	\begin{equation*}
		 		M(\Phi_{v_{\xi^{\ast}}},\xi^{\ast})\curlyeqprec M(a\zeta_{2}^{2}+c\zeta_{3}^{2}+d\zeta_{1}^{2}\zeta_{2}+\lambda\zeta_{1}^{4})
		 	\end{equation*}
		 	\begin{equation*}
		 		\curlyeqprec(-\frac{1}{2},0)+M(a\zeta_{2}^{2}+d\zeta_{1}^{2}\zeta_{2}+\lambda \zeta_{1}^{4})\curlyeqprec(-\frac{1}{2},0)+(-\frac{3}{4},0)\curlyeqprec(-\frac{5}{4},0).
		 	\end{equation*}
		 	
		 	For (IV), we suppose that $\xi^{\ast}=(\xi_{1}^{\ast},\xi_{2}^{\ast},\xi_{3}^{\ast})$. Then we consider the linear transform $\xi:=A\zeta+\xi^{\ast}$, where
		 	\begin{equation*}
		 		A=
		 		\begin{bmatrix}
		 			\tan(\xi_{1}^{\ast})&-\tan(\xi_{2}^{\ast})&-\tan(\xi_{3}^{\ast})\\
		 			\tan(\xi_{2}^{\ast})&\tan(\xi_{1}^{\ast})&0\\
		 			\tan(\xi_{3}^{\ast})&0&\tan(\xi_{1}^{\ast})\\
		 		\end{bmatrix}.
		 	\end{equation*}
		 	Then we have the Taylor series at $0$ as follows
		 	\begin{equation*}
		 		\Phi_{v_{\xi^{\ast}}}(A\zeta+\xi^{\ast})=\left(v_{\xi^{\ast}}\cdot\xi^{\ast}-\omega(\xi^{\ast})\right)+ (a\zeta_{2}^{2}+b\zeta_{2}\zeta_{3}+c\zeta_{3}^{2}+d\zeta_{1}^{3})+R(\zeta)
		 	\end{equation*}
		 	\begin{equation*}
		 		:=r_{\ast}+Q(\zeta)+R(\zeta),
		 	\end{equation*}
		 	where $R(\zeta)$ contains no $\zeta_{1}^{2}$-terms and $d\ne0$, $b^{2}\ne 4ac$. Then we can rotate $\zeta_{2}\zeta_{3}$-plane and eliminate $\zeta_{2}\zeta_{3}$-term. Take $\gamma:=(\frac{1}{3},\frac{1}{2},\frac{1}{2})$, we see that $Q(\zeta)\in \mathcal{E}_{\gamma,3}$, $R(\zeta)\in H_{\gamma,3}$. Applying the Lemma \ref{E}, Lemma \ref{Q} and Lemma \ref{-2/3} in Appendix, and derive 
		 		\begin{equation*}
		 		M(\Phi_{v_{\xi^{\ast}}},\xi^{\ast})\curlyeqprec M(a\zeta_{2}^{2}+c\zeta_{3}^{2}+d\zeta_{1}^{3})
		 	\end{equation*}
		 	\begin{equation*}
		 		\curlyeqprec(-\frac{1}{2},0)+(-\frac{1}{2},0)+(-\frac{1}{3},0)\curlyeqprec(-\frac{4}{3},0).
		 	\end{equation*}
		 \end{proof}
		 Now we are ready to prove the key Lemma \ref{3.4}. The proof follows similar spirit of the proof in \cite{8}.
		 \begin{proof}[Proof of Lemma \ref{3.4}]
		 	Notice that, if we change the variable $\xi:=N\xi$, then we have
		 	\begin{equation}\label{interpolaion}
		 		\sup_{v\in \mathbb{R}^{3}}\left|J_{\Phi_{v},\eta(\frac{\cdot}{N})}(\tau)\right|=N^{3}\sup_{v\in \mathbb{R}^{3}}|\int_{\mathbb{R}^{3}}e^{i\left(x\cdot\xi-\omega(N\xi)\right)}\eta(\xi)d\xi|\lesssim N^3.
		 	\end{equation}
		 	This trivial inequality will be used for interpolation purpose and we can suppose that
		 	 $\tau\ge1$.
		 	
		 	Next, we choose a dyadic number $N_{\alpha}\ll r_{\alpha},1 $ and $R\gg 1$, then we consider the following cases.

		 	\noindent
		 	\textbf{Case 1: $1\ge N\ge N_{\alpha}.$ }
		 	
		 	For $\xi\in\mathbb{T}^{3}-B(0,\frac{r_{\alpha}}{2})$, we can apply uniform estimate of oscillatory integral in Lemma \ref{g}. To be more specific, we can take small neighborhoods $\xi \in \Omega_{\xi}$ and $v_{\xi}=\nabla\omega(\xi)\in V_{\xi}$, s.t. there exist constant $C_{\xi}>0$, $M_{\xi}\in\mathbb{N}$, and following uniform estimate
		 	\begin{equation}\label{w}
		 		\left|J_{\Phi_{v},\zeta}(\tau)\right|\le C\left(1+|\tau|\right)^{\beta}\|\zeta\|_{C^{M}(\Omega_{\xi})}, \quad \forall v\in V_{\xi}, \forall \zeta \in C_{c}^{\infty}(\Omega_{\xi}),
		 	\end{equation}
		 	where $\beta\in \lbrace -\frac{7}{6}, -\frac{5}{4}, -\frac{4}{3}, -\frac{3}{2}\rbrace$, depends on the type of $\xi$. Further more, we can suppose that $\Omega_{\xi}$ is small enough that 
		 	\begin{equation*}
		 		\inf_{v\in V_{\xi}^{c}, \xi\in\Omega_{\xi}}\left|v-\nabla\omega(\xi)\right|:=c_{\xi}>0.
		 	\end{equation*}
		 	Then for $v\in V_{\xi}^{c}$, we can repeatedly use integration by part and yield similar estimate as follows
		 	\begin{equation}\label{e}
		 		\left|J_{\Phi_{v},\zeta}(\tau)\right|\le C\left(1+|\tau|\right)^{\beta}\|\zeta\|_{C^{M}(\Omega_{\xi})}, \quad \forall v\in V_{\xi}^{c}, \forall \zeta \in C_{c}^{\infty}(\Omega_{\xi}).
		 	\end{equation}

		 	Since $\left\lbrace\Omega_{\xi}\right\rbrace_{\xi}$ forms a open cover of $\mathbb{T}^{3}-B(0,\frac{r_{\alpha}}{2})$, we can take a finite sub-cover $\left\lbrace\Omega_{\xi_{i}}\right\rbrace_{i=1}^{n_{0}}$, for some $n_{0}\in \mathbb{N}$. Suppose that $\left\lbrace \phi_{i}\right\rbrace_{i=1}^{n_{0}}$ is the partition of unity, which is subordinated to the finite sub-cover $\left\lbrace\Omega_{\xi_{i}}\right\rbrace_{i=1}^{n_{0}}$. Then we have $\supp\phi_{i}\subseteq\Omega_{\xi_{i}}$ and can divide the oscillatory integral (\ref{deal}) as follows
		 	\begin{equation}
		 		J_{\Phi_{v},\eta\left(\frac{\cdot}{N}\right)}(\tau)=\sum_{i=1}^{n_{0}}J_{\Phi_{v},\eta\left(\frac{\cdot}{N}\right)\phi_{i}(\cdot)}(\tau).
		 	\end{equation}
		 	And we denote that $C_{0}:=\max_{i}\lbrace C_{\xi_{i}}\rbrace$ and $M_{0}:=\max_{i}\lbrace M_{\xi_{i}}\rbrace$. Direct calculation also shows that 
		 	\begin{equation*}
		 		\|\eta(\frac{\cdot}{N})\phi_{i}(\cdot)\|_{C^{M_{0}}}\lesssim N^{-M_{0}}.
		 	\end{equation*}
		 	From the above estimates (\ref{w}) and (\ref{e}), we obtain that 
		 	\begin{equation}\label{r}
		 	\sup_{v\in \mathbb{R}^{3}}\left|J_{\Phi_{v},\eta(\frac{\cdot}{N})}(\tau)\right|\le C_{0}\left(\sum_{i=1}^{n_{0}}\|\eta(\frac{\cdot}{N})\phi_{i}(\cdot)\|_{C^{M_{0}}}\right)(1+|\tau|)^{-\frac{7}{6}}\lesssim N^{3-\frac{7}{6}\alpha}\tau^{-\frac{7}{6}}.
		 	\end{equation}
		 	The second inequality comes from $N^{-M_{0}}\lesssim N^{3-\frac{7}{6}\alpha}$, as $1\ge N\ge N_{\alpha}$. Then the interpolation between estimate (\ref{interpolaion}) and estimate (\ref{r}) yields
		 	\begin{equation*}
		 		\sup_{v\in \mathbb{R}^{3}}\left|J_{\Phi_{v},\eta(\frac{\cdot}{N})}(\tau)\right|\lesssim \left(N^{3-\frac{7}{6}\alpha}\tau^{-\frac{7}{6}}\right)^{\frac{6}{7}}\cdot\left(N^{3}\right)^{\frac{1}{7}}=N^{3-\alpha}\tau^{-1}.
		 	\end{equation*}
		 	\noindent
		 	\textbf{Case 2: $N< N_{\alpha}$ and $|x|\le R.$ }
		 	
		 	Recall that $v=\frac{x}{h\tau}$, then we have 
		 	\begin{equation*}
		 		\sup_{v\in\mathbb{R}^{s}}\left|J_{\Phi_{v},\eta(\frac{\cdot}{N})}(\tau)\right|=N^{3}\sup_{x\in\mathbb{R}^{3}}\left|\int_{\mathbb{R}^{3}}e^{i(x\cdot\xi-\tau\omega(N\xi))}\eta(\xi)d\xi\right|.
		 	\end{equation*}
		 	To connect the discrete case and the continuous case, we can introduce the following change of variables, which has appeared in \cite{9}.
		 	\begin{equation}\label{variable}
		 		z_{i}:=\frac{2}{N}\sin\left(\frac{N\xi_{i}}{2}\right), \; \xi_{i}=\frac{2}{N}\sin^{-1}\left(\frac{Nz_{i}}{2}\right), \;i=1,2,3,
		 	\end{equation}
		 	with the according Jacobi term 
		 	\begin{equation*}
		 		J_{c}(z)=\prod_{i=1}^{3}\left(1-\left(\frac{Nz_{i}}{2}\right)^{2}\right)^{-\frac{1}{2}}.
		 	\end{equation*}
		 	Then we have 
		 	\begin{equation*}
		 		N^{3}\int_{\mathbb{R}^{3}}e^{i(x\cdot\xi-\tau\omega(N\xi))}\eta(\xi)d\xi=
		 		N^{3}\int_{\mathbb{R}^{3}}e^{i(x\cdot\xi(z)-\tau N^{\alpha}\rho^{\alpha})}\eta(\xi(z))J_{c}(z)dz
		 	\end{equation*}
		 	\begin{equation}\label{I}
		 		=N^{3}\int_{0}^{\infty}e^{-i\tau N^{\alpha}\rho^{\alpha}}G(\rho,x,N)\rho^{2}d\rho:=(I),
		 	\end{equation}
		 	where $(r,\theta)=(r,\theta_{1},\theta_{2})$ and $(\rho,\phi)=(\rho,\phi_{1},\phi_{2})$ are the polar coordinates for $x=(x_{1},x_{2},x_{3})$ and $z=(z_{1},z_{2},z_{3})$, with
		 	\begin{equation*}
		 		G(\rho,x,N):=G(\rho)=\int_{\mathbb{S}^{2}}e^{ix\cdot\xi(z)}\eta(\xi(z))J_{c}(z)d\phi(z)
		 	\end{equation*}
		 	\begin{equation*}
		 		\int_{0}^{2\pi}\int_{0}^{\pi}e^{i\lambda\Phi_{G}(\phi)}\eta(\xi(z))J_{c}(z)d\phi(z)\sin^{2}(\phi_{1})d\phi_{1}d\phi_{2},
		 	\end{equation*}
		 	where we denote $\lambda=\rho r$, and 
		 	\begin{equation*}
		 		\Phi_{G}(\phi)=\frac{2}{\rho N} \sin(\theta_{1})\cos(\theta_{2})\sin^{-1}\left(\frac{N\rho\sin(\phi_{1})\cos(\phi_{2})}{2}\right)
		 	\end{equation*}
		 	\begin{equation}\label{long}
		 		+\frac{2}{\rho N}\sin(\theta_{1})\sin(\theta_{2})\sin^{-1}\left(\frac{N\rho\sin(\phi_{1})\sin(\phi_{2})}{2}\right)+\frac{2}{\rho N}\cos(\theta_{1})\sin^{-1}\left(\frac{N\rho\cos(\phi_{1})}{2}\right).
		 	\end{equation}
		 	By symmetry, we only consider the case $x_{2}\ge x_{1}\ge x_{3}\ge0$, which lead to the range $\theta_{1}, \theta_{2}\in[\frac{\pi}{4},\frac{\pi}{2}]$. From the change of variables (\ref{variable}), we see $z_{i}\approx\xi_{i}$, $\rho\approx|\xi|$, $i=1,2,3$. Then we have 
		 	\begin{equation*}
		 		\supp\eta\subseteq\left\lbrace|\xi|\in [\dfrac{\pi}{2},2\pi]\right\rbrace\Rightarrow\rho\in[\frac{\pi}{4},4\pi],
		 	\end{equation*}
		 	i.e. the integrand in (\ref{I}) is supported on $\rho\in[\frac{\pi}{4},4\pi]$.
		 	
		 	Notice that, integration by part yields
		 	\begin{equation}\label{h}
		 		(I)=\frac{N^{3-\alpha}}{i\alpha\tau}\int_{0}^{\infty}e^{-i\tau N^{\alpha}\rho^{\alpha}}\partial_{\rho}\left(G(\rho)\rho^{3-\alpha}\right)d\rho.
		 	\end{equation}
		 	Therefore, if we can show $\partial_{\rho}\left(G(\rho)\rho^{3-\alpha}\right)$ is bounded, uniformly in $N$, then we finish the proof. Since $\rho\in[\frac{\pi}{4},2\pi]$, we see that
		 	\begin{equation*}
		 		\left|G(\rho)\partial_{\rho}\left(\rho^{3-\alpha}\right)\right|\lesssim 1.
		 	\end{equation*}
		 	Direct calculation shows that
		 	\begin{equation*}
		 		\sup_{N\le N_{\alpha}}\left|\partial_{\rho}\left(e^{i\lambda\Phi_{G}(\phi)}\right)\right|\lesssim r\le R, \; \sup_{N\le N_{\alpha}}\left| \partial_{\rho}\eta\right|,\; \sup_{N\le N_{\alpha}}\left| \partial_{\rho}J_{c}\right|\lesssim 1.
		 	\end{equation*}
		 	Then for fixed $R$, we derive the estimate $|(I)|\lesssim N^{3-\alpha}\tau^{-1}$.

		 	\noindent
		 	\textbf{Case 3: $N< N_{\alpha}$ and $|x|> R.$ }
		 	
		 	Then we analyze the property of $G(\rho)$, especially its critical points. Direct calculation shows that
		 	\begin{itemize}
		 		\item 
		 		\begin{equation*}
		 			\partial_{\phi_{1}}\Phi_{G}=\dfrac{\sin(\theta_{1})\cos(\theta_{2})\cos(\phi_{1})\cos(\phi_{2})}{\sqrt{1-\left(\dfrac{N\rho\sin(\phi_{1})\cos(\phi_{2})}{2}\right)^{2}}}
		 		\end{equation*}
		 		\begin{equation*}
		 			+\dfrac{\sin(\theta_{1})\sin(\theta_{2})\cos(\phi_{1})\sin(\phi_{2})}{\sqrt{1-\left(\dfrac{N\rho\sin(\phi_{1})\sin(\phi_{2})}{2}\right)^{2}}}-\dfrac{\cos(\theta_{1})\sin(\phi_{1})}{\sqrt{1-\left(\dfrac{N\rho\cos(\phi_{1})}{2}\right)^{2}}};
		 		\end{equation*}
		 		\item 
		 		\begin{equation*}
		 			\partial_{\phi_{2}}\Phi_{G}=-\dfrac{\sin(\theta_{1})\cos(\theta_{2})\sin(\phi_{1})\sin(\phi_{2})}{\sqrt{1-\left(\dfrac{N\rho\sin(\phi_{1})\cos(\phi_{2})}{2}\right)^{2}}}+\dfrac{\sin(\theta_{1})\sin(\theta_{2})\sin(\phi_{1})\cos(\phi_{2})}{\sqrt{1-\left(\dfrac{N\rho\sin(\phi_{1})\sin(\phi_{2})}{2}\right)^{2}}}.
		 		\end{equation*}
		 	\end{itemize}
		 	Then we classify the critical points, i.e. $\nabla_{\phi}\Phi_{G}=(\partial_{\phi_{1}}\Phi_{G},\partial_{\phi_{2}}\Phi_{G})=0$.
		 	\begin{itemize}
		 		\item (First-kind critical points): If $\sin(\phi_{1})=0$, i.e. $\phi_{1}=0$ or $\pi$, then we have
		 		\begin{equation*}
		 			\phi_{2}=\theta_{2}+\frac{\pi}{2} \quad or\quad \theta_{2}+\frac{3\pi}{2}.
		 		\end{equation*} 
		 		\item (Second-kind critical points): If $\sin(\phi_{1})\ne0$, i.e. $\phi_{1}\ne0$ and $\pi$, then we have
		 		\begin{equation}\label{zzz}
		 			\tan(\phi_{2})\cdot g(\rho,\phi_{1},\phi_{2})=\tan(\theta_{2}),
		 		\end{equation}
		 		\begin{equation}\label{z}
		 			\tan(\phi_{1})\cdot\sin(\phi_{2})\cdot h(\rho,\phi_{1},\phi_{2})=\tan(\theta_{1})\cdot \sin(\theta_{2}),
		 		\end{equation}
		 		where 
		 		\begin{equation}\label{zz}
		 			g(\rho,\phi_{1},\phi_{2})=\left(\dfrac{1-\left(\dfrac{N\rho\sin(\phi_{1})\sin(\phi_{2})}{2}\right)^{2}}{1-\left(\dfrac{N\rho\sin(\phi_{1})\cos(\phi_{2})}{2}\right)^{2}}\right)^{\dfrac{1}{2}},
		 		\end{equation}
		 		\begin{equation}\label{zzzz}
		 			h(\rho,\phi_{1},\phi_{2})=\left(\dfrac{1-\left(\dfrac{N\rho\sin(\phi_{1})\sin(\phi_{2})}{2}\right)^{2}}{1-\left(\dfrac{N\rho\cos(\phi_{1})}{2}\right)^{2}}\right)^{\dfrac{1}{2}}.
		 		\end{equation}
		 	\end{itemize}
		 	Observe that $N<N_{\alpha}\ll1$, we have $|g(\rho,\phi_{1},\phi_{2})-1|\ll1, |h(\rho,\phi_{1},\phi_{2})-1|\ll1. $ Thus, from the equations (\ref{zzz}), (\ref{z}), we deduce that
		 	\begin{equation*}
		 		\phi_{1}\approx\theta_{1}, \phi_{2}\approx\theta_{2}\quad or \quad  \phi_{1}\approx\pi-\theta_{1}, \phi_{2}\approx\pi+\theta_{2}.
		 	\end{equation*}
		 	In conclusion, for fixed $\theta=(\theta_{1},\theta_{2})$, we have $6$ critical points $\phi^{(k)}:=\left(\phi_{1}^{(k)},\phi_{2}^{(k)}\right)$, $k=1,2,3,4,5,6$. Furthermore, the distances among them have a uniform positive lower bound.
		 	
		 	Next, we claim that the critical points are all non-degenerate and the determinants of their Hessian matrices also have a uniform positive lower bound. In fact, from the condition $N<N_{\alpha}\ll1$ and calculation, we have
		 	\begin{itemize}
		 		\item
		 		\begin{equation*}
		 			\partial_{\phi_{1}}^{2}\Phi_{G}\approx -\sin(\theta_{1})\sin(\phi_{1})\cos(\theta_{2}-\phi_{2})-\cos(\theta_{1})\cos(\phi_{1});
		 		\end{equation*}
		 		\item 
		 		\begin{equation*}
		 			\partial_{\phi_{2}}^{2}\Phi_{G}\approx -\sin(\theta_{1})\sin(\phi_{1})\cos(\theta_{2}-\phi_{2});
		 		\end{equation*}
		 		\item 
		 		\begin{equation*}
		 			\partial_{\phi_{1}\phi_{2}}^{2}\Phi_{G}\approx -\sin(\theta_{1})\cos(\phi_{1})\sin(\phi_{2}-\theta_{2}).
		 		\end{equation*}
		   \end{itemize} 
		 	It can be checked that $\det(Hess_{\phi}\Phi_{G}(\phi^{(k)}))\ge c>0$, $k=1,2,3,4,5,6,$ where $c$ is independent of $N$. Then we introduce cutoffs $\chi_{k}\in C_{c}^{\infty}({U_{k}})$, where $U_{k}$ is a small neighborhood of $\phi^{(k)},$ and $\chi_{k}\equiv1$ in a smaller neighborhood of $\phi^{(k)}$, $k=1,2,3,4,5,6.$ And we denote that $\chi_{0}:=1-\sum_{k=1}^{6}\chi_{k}$, with
		 	\begin{equation*}
		 		\widetilde{\chi_{k}}(\phi):=\chi_{k}(\phi)\eta(\xi(z)) J_{c}(z)\sin^{2}({\phi_{1}}), \; k=0,1,2,3,4,5,6.
		 	\end{equation*}
		 	Then we divide $G(\rho)$ into several parts
		 	\begin{equation*}
		 		G(\rho)=\sum_{k=0}^{6}	\int_{0}^{2\pi}\int_{0}^{\pi}e^{i\lambda\Phi_{G}(\phi)}\chi_{k}(\phi_{1},\phi_{2})\eta(\xi(z))J_{c}(z)\sin^{2}(\phi_{1})d\phi_{1}d\phi_{2}
		 	\end{equation*}
		 	\begin{equation*}
		 		=\sum_{k=0}^{6}	\int_{0}^{2\pi}\int_{0}^{\pi}e^{i\lambda\Phi_{G}(\phi)}\widetilde{\chi_{k}}(\phi_{1},\phi_{2})d\phi_{1}d\phi_{2}:=\sum_{k=0}^{6}G_{k}(\rho).
		 	\end{equation*}
		 	For $k=0$, we have $|\nabla_{\phi}\Phi_{G}|>0$ in the support of $\widetilde{\chi_{0}}$. Therefore, integration by part shows the estimate that
		 	\begin{equation*}
		 		|\partial_{\lambda}G_{0}|\lesssim\lambda^{-1}.
		 	\end{equation*} 
		 	For $k=1,2,3,4,5,6$, we have $\det(Hess_{\phi}\Phi_{G})\ne 0$ in the support of $\widetilde{\chi_{k}}$. Then we have the classical asymptotic expression (see e.g. \cite{19} Proposition 6 in Chapter VIII) that
		 	\begin{equation*}
		 		G_{k}(\rho)=\dfrac{2\pi i}{\sqrt{\left|\det(Hess_{\phi}\Phi_{G}) \right|}}e^{i\lambda\Phi_{G}(\phi^{(k)})}\widetilde{\chi_{k}}(\phi^{(k)})\lambda^{-1}+\widetilde{G}_{k}(\lambda), \; \lambda\gg1,
		 	\end{equation*}
		 	where $\left|\partial_{\lambda}^{m}\widetilde{G}_{k}(\lambda)\right|\lesssim_{m}\lambda^{-(m+1)}$, $\forall m\ge0$.
		 	
		 	Since $|\partial_{\rho}G_{0}|=r|\partial_{\lambda}G_{0}|\lesssim \frac{r}{\lambda}=\frac{1}{\rho}$, we can replace $G$ with $G_{0}$ in integral (\ref{h}) and yield desired bound $N^{3-\alpha}\tau^{-1}$. Similarly, $\widetilde{G}_{k}(\lambda)$ will also yield such bound. Thus, it remains to estimate (I) with $G$ replaced by the leading term of $G_{k}$, which can be reduced to 
		 	\begin{equation*}
		 		(II):=N^{3}r^{-1}\int_{0}^{\infty}e^{i\tau\Psi(\rho)}a(\rho)d\rho,
		 	\end{equation*}
		 	where
		 	\begin{equation*}
		 		a(\rho):= \dfrac{\eta(\rho,\phi^{(k)})J_{c}(\rho,\phi^{(k)})\rho}{\sqrt{\left|\det(Hess_{\phi}\Phi_{G}) \right|}},
		 	\end{equation*}
		 	\begin{equation*}
		 		\Psi(\rho):=-N^{\alpha}\rho^{\alpha}+\Phi_{G}(\rho,\phi^{(k)})\frac{r\rho}{\tau}.
		 	\end{equation*}
		 	\noindent
		 	\textbf{Sub-case 3.1: $\frac{r}{\tau}\gg N|\nabla\omega(N\xi)|$ or $\frac{r}{\tau}\ll N|\nabla\omega(N\xi)|$}.
		 	
		 	Direct calculation shows that $|\nabla\omega(N\xi)|\simeq N^{\alpha-1}$. Then under the condition of this sub-case, we have
		 	\begin{equation*}
		 		\left| \nabla_{\xi}\left(\frac{x}{\tau}\cdot\xi-\omega(N\xi)\right)\right|\ge \frac{N}{2}|\nabla\omega(N\xi)|\simeq N^{\alpha}.
		 	\end{equation*}
		 	Thus, using integration by part, we can deduce
		 	\begin{equation*}
		 		N^{3}\left|\int_{\mathbb{R}^{3}}e^{i(x\cdot\xi-\tau\omega(N\xi))}\eta(\xi)d\xi\right|\lesssim N^{3-\alpha}\tau^{-1},
		 	\end{equation*}
		 	which finishes the proof.
		 	
		 	\noindent
		 	\textbf{Sub-case 3.2: $\frac{r}{\tau}\simeq N|\nabla\omega(N\xi)|\simeq N^{\alpha}$ }.

		 	We claim that $\left|\partial_{\rho}^{2}\left(\Phi_{G}(\phi^{(k)})\cdot\rho\right)\right|\lesssim N^2$. Since $\phi^{(k)}=(\phi_{1}^{(k)},\phi_{2}^{(k)})$ also depends on $\rho$, the concrete expression will be extremely long. For simplicity, we ignore the first two terms in the expression (\ref{long}) of $\Phi_{G}$.
		 	\begin{itemize}
		 		\item
		 		 \begin{equation*}
		 			\partial_{\rho}\left(\Phi_{G}(\phi^{(k)})\cdot\rho\right)=\dfrac{\cos(\theta_{1})}{\sqrt{1-\left(\frac{N\rho\cos(\phi_{1}^{(k)})}{2}\right)^{2}}}\cdot\left[\cos(\phi_{1}^{(k)})-\rho\sin(\phi_{1}^{(k)})\partial_{\rho}\phi_{1}^{(k)}\right];
		 		\end{equation*}
		 		\item 
		 		\begin{equation*}
		 			\partial_{\rho}^{2}\left(\Phi_{G}(\phi^{(k)})\cdot\rho\right)=\dfrac{N^{2}\rho}{4}\cdot \dfrac{\cos(\theta_{1})\cos(\phi_{1}^{(k)})}{\left(1-\left(\frac{N\rho\cos(\phi_{1}^{(k)})}{2}\right)^{2}\right)^{\frac{3}{2}}}\cdot\left[\cos(\phi_{1}^{(k)})-\rho\sin(\phi_{1}^{(k)})\partial_{\rho}\phi_{1}^{(k)}\right]^{2}
		 		\end{equation*}
		 		\begin{equation*}
		 			+\dfrac{\cos(\theta_{1})}{\sqrt{1-\left(\frac{N\rho\cos(\phi_{1}^{(k)})}{2}\right)^{2}}}\cdot\left[-2\sin(\phi_{1}^{(k)})\partial_{\rho}\phi_{1}^{(k)}-\rho\cos(\phi_{1}^{(k)})\left(\partial_{\rho}\phi_{1}^{(k)}\right)^{2}-\rho\sin(\phi_{1}^{(k)})\partial_{\rho}^{2}\phi_{1}^{(k)}\right].
		 		\end{equation*}
		 	\end{itemize}
		 	Thus, it suffices to show $\left|\partial_{\rho}\phi_{i}^{(k)}\right|$, $\left|\partial_{\rho}^{2}\phi_{i}^{(k)}\right|\lesssim N^{2}$, $i=1,2.$
		 	
		 	If $\phi^{(k)}=(\phi_{1}^{(k)},\phi_{2}^{(k)})$ is the first-kind critical point, then $\partial_{\rho}\phi_{i}^{(k)},\partial_{\rho}^{2}\phi_{i}^{(k)} \equiv0, i=1,2$. From the above expression of $\partial_{\rho}^{2}\left(\Phi_{G}(\phi^{(k)})\cdot\rho\right)$, we directly deduce that $\left|\partial_{\rho}^{2}\left(\Phi_{G}(\phi^{(k)})\cdot\rho\right)\right|\lesssim N^2$. Thus, we only need to consider the case of second-kind critical points.
		 	
		 	Differentiate $g(\rho,\phi_{1}^{(k)},\phi_{2}^{(k)})$,  $h(\rho,\phi_{1}^{(k)},\phi_{2}^{(k)})$ in equations (\ref{zz}) and (\ref{zzzz}), we can check that $\partial_{\rho}g, \partial_{\rho}h$ are of the form like 
		 	\begin{equation}\label{jj}
		 		a(\rho,\phi_{1}^{(k)},\phi_{2}^{(k)})\partial_{\rho}\phi_{1}^{(k)}+b(\rho,\phi_{1}^{(k)},\phi_{2}^{(k)})\partial_{\rho}\phi_{2}^{(k)}+c(\rho,\phi_{1}^{(k)},\phi_{2}^{(k)}),
		 	\end{equation}
		 	where $|a|,|b|,|c|\lesssim N^{2}$. Differentiate them again, we can derive $\partial_{\rho}^{2}g, \partial_{\rho}^{2}h$, which are of the form like 
		 	\begin{equation*}
		 		\widetilde{a}(\rho,\phi_{1}^{(k)},\phi_{2}^{(k)})\partial_{\rho}^{2}\phi_{1}^{(k)}+\widetilde{b}(\rho,\phi_{1}^{(k)},\phi_{2}^{(k)})\partial_{\rho}^{2}\phi_{2}^{(k)}+\widetilde{c}(\rho,\phi_{1}^{(k)},\phi_{2}^{(k)})\partial_{\rho}\phi_{1}^{(k)}\partial_{\rho}\phi_{2}^{(k)}
		 	\end{equation*}
		 	\begin{equation}\label{kk}
		 		+\widetilde{d}(\rho,\phi_{1}^{(k)},\phi_{2}^{(k)})\partial_{\rho}\phi_{1}^{(k)}+\widetilde{e}(\rho,\phi_{1}^{(k)},\phi_{2}^{(k)})\partial_{\rho}\phi_{2}^{(k)}+\widetilde{f}(\rho,\phi_{1}^{(k)},\phi_{2}^{(k)}),
		 	\end{equation}
		 	where $|\widetilde{a}|,|\widetilde{b}|,|\widetilde{c}|,|\widetilde{d}|,|\widetilde{e}|,|\widetilde{f}|\lesssim N^{2}$.
		 	
		 	Next, we differentiate equations (\ref{z}) and (\ref{zzz}), and obtain
		 	\begin{equation*}
		 		\partial_{\rho}\phi_{2}^{(k)}\cdot g=-\sin(\phi_{2}^{(k)})\cos(\phi_{2}^{(k)})\partial_{\rho}g,
		 	\end{equation*}
		 	\begin{equation}\label{ll}
		 		\partial_{\rho}\phi_{1}^{(k)}\cdot\left(\sin(\phi_{2}^{(k)})h\right)=-\partial_{\rho}\phi_{2}^{(k)}\left(\sin(\phi_{1}^{(k)})\cos(\phi_{1}^{(k)})h\right)-\sin(\phi_{1}^{(k)})\cos(\phi_{1}^{(k)})\sin(\phi_{2}^{(k)})\partial_{\rho}h.
		 	\end{equation}
		 	Notice that $1\lesssim|\sin(\phi_{2}^{(k)})|$, as $\theta_{2}\in [\frac{\pi}{4},\frac{\pi}{2}]$, $\phi_{2}^{(k)}\approx \theta_{2}$ or $\pi+\theta_{2}$, and $g,h\approx 1$. Then we substitute (\ref{jj}) into (\ref{ll}) and solve $\partial_{\rho}\phi_{i}^{(k)}$, which leads to the estimate $\left|\partial_{\rho}\phi_{i}^{(k)}\right|\lesssim N^{2}$.
		 	
		 	To estimate $\left|\partial_{\rho}^{2}\phi_{i}^{(k)}\right|$, we differentiate (\ref{ll}) again, and yields
		 	\begin{equation*}
		 		\partial_{\rho}^{2}\phi_{2}^{(k)}\cdot g=-\partial_{\rho}\phi_{2}^{(k)}\cdot\left(1+\cos(2\phi_{2}^{(k)})\right)\partial_{\rho}g-\sin(\phi_{2}^{(k)})\cos(\phi_{2}^{(k)})\partial_{\rho}^{2}g,
		 	\end{equation*}
		 	\begin{equation*}
		 		\partial_{\rho}^{2}\phi_{1}^{(k)}\cdot\left(\sin(\phi_{2}^{(k)})h\right)=-\partial_{\rho}^{2}\phi_{2}^{(k)}\cdot\left(\frac{1}{2}\sin(2\phi_{1}^{(k)})h\right)-\partial_{\rho}\phi_{1}^{(k)}\partial_{\rho}\phi_{2}^{(k)}\cdot \left(\cos(\phi_{2}^{(k)})+\cos(2\phi_{1}^{(k)})\right)
		 	\end{equation*}
		 	\begin{equation*}
		 		-\partial_{\rho}\phi_{1}^{(k)}\cdot\left(1+\cos(2\phi_{1}^{(k)})\right)\sin(\phi_{2}^{(k)})\partial_{\rho}h-\frac{1}{2}\partial_{\rho}\phi_{2}^{(k)}\left(1+\cos(\phi_{2}^{(k)})\right)\sin(2\phi_{1}^{(k)})\partial_{\rho}h
		 	\end{equation*}
		 	\begin{equation}\label{mm}
		 		-\frac{1}{2}\sin(2\phi_{1}^{(k)})\sin(\phi_{2}^{(k)})\partial_{\rho}^{2}h.
		 	\end{equation}
		 	Combined with the previous estimate $\left|\partial_{\rho}\phi_{i}^{(k)}\right|\lesssim N^{2}$ and (\ref{kk}), we can similarly solve $\partial_{\rho}^{2}\phi_{i}^{(k)}$ and deduce that $\left|\partial_{\rho}^{2}\phi_{i}^{(k)}\right|\lesssim N^{2}$.
		 	
		 	Now, we can estimate (II). Observe that, we have 
		 	\begin{equation*}
		 		\left|\partial_{\rho}^{2}\Psi\right|\ge \alpha(\alpha-1)N^{\alpha}\rho^{\alpha-2}-\left|\partial_{\rho}^{2}\left(\Phi_{G}(\phi^{(k)}\cdot \rho)\right)\right|\frac{r}{\tau}
		 	\end{equation*}
		 	\begin{equation*}
		 		\gtrsim (\alpha-1)N^{\alpha}-N^{2}\frac{r}{\tau}\gtrsim_{\alpha} N^{\alpha}.
		 	\end{equation*}
		 	Thus, from Van der Corput lemma, we can deduce that
		 	\begin{equation*}
		 		|(II)|\lesssim N^{3-\frac{3}{2}\alpha}\tau^{-\frac{3}{2}}\left(\|a\|_{L^{\infty}\left([\frac{\pi}{4},4\pi]\right)}+\|\partial_{\rho}a\|_{L^{1}\left([\frac{\pi}{4},4\pi]\right)}\right).
		 	\end{equation*}
		 	It can be easily check that 
		 	\begin{equation*}
		 		\sup_{N\le N_{\alpha}, \rho\in[\frac{\pi}{4},4\pi]}|a(\rho)|\lesssim 1, \sup_{N\le N_{\alpha}, \rho\in[\frac{\pi}{4},4\pi]} |\partial_{\rho}a(\rho)|\lesssim 1.
		 	\end{equation*}
		 	Therefore, we have completed the whole proof of Lemma \ref{3.4} by using the interpolation. 
		 \end{proof}

		\section{Appendix}
		In this appendix, we will introduce the uniform estimate of oscillatory integral and related  Newton polyhedron, which have been used in the proof of Lemma \ref{3.4}. 
		
		Following the notation in \cite{11} or \cite{12}, we first introduce some concepts.
		\begin{defi}
			For $r,s>0$, the space $\mathcal{H}_{r}(s)$ is defined as follows
			\begin{equation*}
				\mathcal{H}_{r}(s):=\left\lbrace P\Big| P\in \mathcal{O}(B_{\mathbb{C}^{d}}(0,r))\cap C(\overline{B}_{\mathbb{C}^{d}}(0,r)), |P(w)|<s, \forall w\in \overline{B}_{\mathbb{C}^{d}}(0,r) \right\rbrace.
			\end{equation*}
		\end{defi}
		\begin{defi}
			Suppose that $h:\mathbb{R}^{d}\to \mathbb{R}$ is real analytic at $0$. We write 
			\begin{equation*}
				M(h)\curlyeqprec (\beta,p), \; \beta\le 0, p\in \mathbb{N},
			\end{equation*}
			if for $r>0$ sufficiently small, there exist $s>0$, $C>0$ and neighborhood $\Omega\subseteq B_{\mathbb{R}^{d}}(0,r)$ of the origin, s.t.
			\begin{equation*}
				\left|J_{h+P,\zeta}(\tau)\right|\le C(1+|\tau|)^{\beta}\log^{p}(2+|\tau|)\|\zeta\|_{C^{N}(\Omega)}, \; \forall \tau\in \mathbb{R}, \zeta\in C_{c}^{\infty}(\Omega), P\in \mathcal{H}_{r}(s),
			\end{equation*}
			 where $J$ is defined as in (\ref{p}), $N=N(h)\in \mathbb{N}$, with 
			 \begin{equation*}
			 	\|\zeta\|_{C^{N}(\Omega)}:=\sup\left\lbrace|\partial^{\gamma}\zeta(\xi)|\Big|\xi\in\Omega, \gamma\in \mathbb{N}^{d}, |\gamma|\le N\right\rbrace.
			 \end{equation*}
			 We have the following writing convention
			 \begin{itemize}
			 	\item We write $M(h,\xi)\curlyeqprec(\beta,p)$, if 
			 	\begin{equation*}
			 		M(\tau_{\xi}h)\curlyeqprec(\beta,p), \; where \; \tau_{\xi}h(y)=h(y+\xi), \forall y\in\mathbb{R}^{d};
			 	\end{equation*}
			 	\item We write $M(h_{2})\curlyeqprec M(h_{1})+(\beta_{2},p_{2})$, if 
			 	\begin{equation*}
			 		M(h_{1})\curlyeqprec(\beta_{1},p_{1}) \;\; implies \; \; M(h_{2})\curlyeqprec(\beta_{1}+\beta_{2},p_{1}+p_{2}).
			 	\end{equation*}
			 \end{itemize} 
			 
		\end{defi}
		\vspace{10pt}
		Let $\gamma=(\gamma_{1},\cdots, \gamma_{d})\in \mathbb{R}^{d}$, with $\gamma_{i}>0, \forall i=1,\cdots,d$. For any $c>0$, we define the dilation as follows
		\begin{equation*}
			\delta_{c}^{\gamma}(\xi):=\left(c^{\gamma_{1}}\xi_{1},\cdots,c^{\gamma_{d}}\xi_{d}\right), \forall \xi\in \mathbb{R}^{d}.
		\end{equation*}
		\begin{defi}
			A polynomial $f$ on $\mathbb{R}^{d}$ is called $\gamma$-homogeneous of degree $\rho$, if 
			\begin{equation*}
				f\circ\delta_{c}^{\gamma}(\xi)=c^{\rho}f(\xi), \forall \xi\in\mathbb{R}^{d}, c>0.
			\end{equation*}
		\end{defi}
		  Let $\mathcal{E}_{\gamma,d}$ be the set of all $\gamma$-homogeneous polynomials on $\mathbb{R}^{d}$ of degree $1$. $H_{\gamma,d}$ is the set of all functions real-analytic at $0$, with the Taylor's series having the form of $\sum_{\gamma\cdot\alpha>1}a_{\alpha}\xi^{\alpha}$, i.e. the monomial is $\gamma$-homogeneous of degree $>1$.
		  
		  Then we briefly introduce some useful lemmas, which can be seen in \cite{11} or \cite{12}.
		  \begin{lemma}
		  	If $h$ is real analytic at $0$ and $\nabla h(0)\ne 0$, then
		  	\begin{equation*}
		  		M(h)\curlyeqprec (-n,0), \forall n\in\mathbb{N}.		  
		  		\end{equation*}
		  \end{lemma}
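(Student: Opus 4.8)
The plan is to prove this by the classical \emph{non-stationary phase} argument, i.e.\ repeated integration by parts, with attention paid to making every constant uniform over the holomorphic perturbation class $\mathcal{H}_{r}(s)$.

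First, since $h$ is real analytic at $0$ and $\nabla h(0)\neq 0$, I would fix $r>0$ so small that $h$ is smooth on $\overline{B}_{\mathbb{R}^{d}}(0,r)$ with $|\nabla h(\xi)|\ge c_{0}>0$ there, and then choose the neighborhood $\Omega$ appearing in the definition of $M(h)\curlyeqprec(\beta,p)$ so that $\overline{\Omega}\subset B_{\mathbb{R}^{d}}(0,r/2)$. For any $P\in\mathcal{H}_{r}(s)$, Cauchy's estimates on polydiscs contained in $B_{\mathbb{C}^{d}}(0,r)$ give, for every multi-index $\gamma$, a bound $\sup_{\overline{\Omega}}|\partial^{\gamma}P|\le C_{\gamma}\,s$ with $C_{\gamma}$ depending only on $|\gamma|$, $r$, $d$. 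After shrinking $s$ so that these bounds for $|\gamma|=1$ are at most $c_{0}/(4d)$, the phase $\phi:=h+P$ satisfies $|\nabla\phi(\xi)|\ge c_{0}/2$ on $\overline{\Omega}$, while all higher derivatives of $\phi$ on $\overline{\Omega}$ are bounded by constants depending only on $h$ and the order of differentiation, uniformly in $P\in\mathcal{H}_{r}(s)$.

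For $|\tau|\le 1$ the claimed bound is immediate, since $|J_{\phi,\zeta}(\tau)|\le|\Omega|\,\|\zeta\|_{C^{0}(\Omega)}\le 2^{n}|\Omega|\,(1+|\tau|)^{-n}\|\zeta\|_{C^{n}(\Omega)}$. For $|\tau|>1$ I would use the transport operator
\[
Lf:=\frac{1}{i\tau}\,\frac{\nabla\phi\cdot\nabla f}{|\nabla\phi|^{2}},\qquad L\big(e^{i\tau\phi}\big)=e^{i\tau\phi},
\]
whose formal transpose is $L^{t}g=-\frac{1}{i\tau}\,\Div\!\big(\frac{\nabla\phi}{|\nabla\phi|^{2}}\,g\big)$. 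Since $\zeta\in C_{c}^{\infty}(\Omega)$, integrating by parts $n$ times produces no boundary terms and gives $J_{\phi,\zeta}(\tau)=\int_{\Omega}e^{i\tau\phi}\,(L^{t})^{n}\zeta\,d\xi$. Each application of $L^{t}$ contributes a factor $\tau^{-1}$, raises by one the order of derivatives acting on $\zeta$, and brings down derivatives of the coefficient field $\nabla\phi/|\nabla\phi|^{2}$, which are bounded on $\overline{\Omega}$ because $|\nabla\phi|\ge c_{0}/2$ there and the derivatives of $\phi$ are controlled. Hence $|(L^{t})^{n}\zeta(\xi)|\le C_{n}\,|\tau|^{-n}\,\|\zeta\|_{C^{n}(\Omega)}$ with $C_{n}$ independent of $\tau$ and of $P$, so $|J_{\phi,\zeta}(\tau)|\le C_{n}|\Omega|\,|\tau|^{-n}\|\zeta\|_{C^{n}(\Omega)}\lesssim(1+|\tau|)^{-n}\|\zeta\|_{C^{n}(\Omega)}$. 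Combining the two ranges and taking $N=n$ in the definition gives $M(h)\curlyeqprec(-n,0)$; since $n$ was arbitrary, the lemma follows.

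The step I expect to be the only real subtlety is the \emph{uniformity in $P$}: one must keep $\Omega$ strictly inside $B_{\mathbb{R}^{d}}(0,r)$ precisely so that the single sup-bound $|P|<s$ defining $\mathcal{H}_{r}(s)$ upgrades, via Cauchy's estimates, to uniform control of $\nabla(h+P)$ from below and of all higher derivatives of $h+P$ from above. Once that is in place, the integration-by-parts scheme runs for all $n$ simultaneously (with $N$ growing linearly in $n$), which is exactly what ``$M(h)\curlyeqprec(-n,0)$ for all $n\in\mathbb{N}$'' demands.
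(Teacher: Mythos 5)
The paper states this lemma without proof, referring the reader to \cite{11} and \cite{12}, so there is no in-paper argument to compare against; you are being asked to supply the folklore proof. Your argument is correct, and it is the standard non-stationary phase estimate adapted to the uniform (Karpushkin-type) perturbation setting: fix $r$ so that $|\nabla h|\ge c_{0}$ on $\overline{B}_{\mathbb{R}^{d}}(0,r)$, use Cauchy's inequalities to upgrade the single sup-bound $|P|<s$ on the complex ball to uniform lower bounds on $|\nabla(h+P)|$ and upper bounds on all derivatives of $h+P$ over a smaller real neighborhood $\overline{\Omega}\subset B(0,r/2)$, and then iterate $L^{t}$ with $\zeta\in C_{c}^{\infty}(\Omega)$. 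You correctly identify the uniformity in $P$ as the only genuine point; the integration-by-parts bookkeeping and the $|\tau|\le1$ case are routine and handled properly, and taking $N=n$ matches the definition of $M(h)\curlyeqprec(-n,0)$.

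One detail worth surfacing explicitly: your transport operator $Lf=\frac{1}{i\tau}\frac{\nabla\phi\cdot\nabla f}{|\nabla\phi|^{2}}$, the identity $L(e^{i\tau\phi})=e^{i\tau\phi}$, and the unstated bound $|e^{i\tau\phi}|=1$ all use that $\phi=h+P$ is \emph{real-valued} on $\mathbb{R}^{d}$. The paper's written definition of $\mathcal{H}_{r}(s)$ does not impose that $P$ be real on the real ball, and without that restriction the lemma is false: taking $P\equiv-is/2$ makes $e^{i\tau\phi}$ grow like $e^{|\tau|s/2}$, and no polynomial decay in $\tau$ can hold uniformly over $\mathcal{H}_{r}(s)$. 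This is an omission in the paper's definition rather than a flaw in your reasoning — in Karpushkin's framework the perturbation class consists of holomorphic functions that are real on $\mathbb{R}^{d}$ — but if you write this proof up you should either state that hypothesis or replace $|\nabla\phi|^{2}$ by $\nabla\phi\cdot\nabla\phi$ and explain why the modulus of the exponential is still controlled.
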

		\begin{lemma}\label{E}
			If $h\in \mathcal{E}_{\gamma,d}$ and $P\in H_{\gamma,d}$, then 
			\begin{equation*}
				M(h+P)\curlyeqprec M(h).
			\end{equation*}
		\end{lemma}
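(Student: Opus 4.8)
The plan is to exploit the exact scaling $h\circ\delta_{c}^{\gamma}=c\,h$, which is valid since $h\in\mathcal{E}_{\gamma,d}$, in order to dilate the perturbation $P$ away: the anisotropic rescaling $\delta_{c}^{\gamma}$ converts every monomial $a_{\alpha}\xi^{\alpha}$ of $P$, which has $\gamma\cdot\alpha>1$, into one that is small for $c$ small, so that the rescaled $P$ can be absorbed into the class $\mathcal{H}_{r}(s)$ of holomorphic perturbations. Concretely, suppose $M(h)\curlyeqprec(\beta,p)$ and fix the associated data: a sufficiently small $r_{0}>0$ (which we may also take smaller than the radius of holomorphy of $P$ below), constants $s_{0},C_{0}>0$, an integer $N_{0}=N(h)$, and a neighbourhood $\Omega_{0}\subseteq B_{\mathbb{R}^{d}}(0,r_{0})$ of the origin with
\begin{equation*}
\left|J_{h+Q_{0},\zeta}(\tau)\right|\le C_{0}(1+|\tau|)^{\beta}\log^{p}(2+|\tau|)\,\|\zeta\|_{C^{N_{0}}(\Omega_{0})},\quad\forall\,\tau\in\mathbb{R},\ \zeta\in C_{c}^{\infty}(\Omega_{0}),\ Q_{0}\in\mathcal{H}_{r_{0}}(s_{0}).
\end{equation*}
Let $\rho>r_{0}$ be a radius to which $P$ extends holomorphically. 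The goal is to produce $r,s,C,\Omega$ witnessing $M(h+P)\curlyeqprec(\beta,p)$.

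First I would record the rescaling identity. Fix $c\in(0,1)$, substitute $\xi=\delta_{c}^{\gamma}(w)$ in $J_{h+P+Q,\zeta}(\tau)$ for an arbitrary perturbation $Q$, and use $h(\delta_{c}^{\gamma}w)=c\,h(w)$. Writing $\tilde{P}_{c}:=c^{-1}P\circ\delta_{c}^{\gamma}$, $\tilde{Q}_{c}:=c^{-1}Q\circ\delta_{c}^{\gamma}$, $\tilde{\zeta}:=\zeta\circ\delta_{c}^{\gamma}$ and $|\gamma|:=\gamma_{1}+\cdots+\gamma_{d}$, the change of variables gives
\begin{equation*}
J_{h+P+Q,\zeta}(\tau)=c^{|\gamma|}\,J_{h+(\tilde{P}_{c}+\tilde{Q}_{c}),\,\tilde{\zeta}}\!\left(c\tau\right),
\end{equation*}
so that $h$ reappears unchanged while $P$ and $Q$ are replaced by $\tilde{P}_{c}+\tilde{Q}_{c}$.

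The crux is that $\tilde{P}_{c}$ becomes negligible as $c\to0^{+}$. Since every Taylor monomial of $P$ has $\gamma\cdot\alpha>1$, we have $\tilde{P}_{c}(w)=\sum_{\gamma\cdot\alpha>1}a_{\alpha}c^{\gamma\cdot\alpha-1}w^{\alpha}$; for $w\in\overline{B}_{\mathbb{C}^{d}}(0,r_{0})$ this series is dominated by the convergent $\sum|a_{\alpha}|r_{0}^{|\alpha|}$ and each term tends to $0$, so by dominated convergence $\sup_{\overline{B}_{\mathbb{C}^{d}}(0,r_{0})}|\tilde{P}_{c}|\to0$. Let $\gamma_{\min}:=\min_{i}\gamma_{i}>0$ and fix $c_{0}\in(0,1)$ small enough that $c_{0}^{\gamma_{\min}}r_{0}<\rho$ and $\sup_{\overline{B}_{\mathbb{C}^{d}}(0,r_{0})}|\tilde{P}_{c_{0}}|<s_{0}/2$. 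Now set $r:=c_{0}^{\gamma_{\min}}r_{0}$, $s:=\frac12 c_{0}s_{0}$, $N:=N_{0}$, and $\Omega:=\delta_{c_{0}}^{\gamma}(\Omega_{0})\subseteq B_{\mathbb{R}^{d}}(0,r)$. Because $\delta_{c_{0}}^{\gamma}$ maps $\overline{B}_{\mathbb{C}^{d}}(0,r_{0})$ into $\overline{B}_{\mathbb{C}^{d}}(0,r)$, for any $Q\in\mathcal{H}_{r}(s)$ the function $\tilde{Q}_{c_{0}}$ is holomorphic on $B_{\mathbb{C}^{d}}(0,r_{0})$, continuous up to the closure, with $|\tilde{Q}_{c_{0}}|<c_{0}^{-1}s=s_{0}/2$ there; combined with the bound on $\tilde{P}_{c_{0}}$ this gives $\tilde{P}_{c_{0}}+\tilde{Q}_{c_{0}}\in\mathcal{H}_{r_{0}}(s_{0})$. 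Moreover, for $\zeta\in C_{c}^{\infty}(\Omega)$ we have $\tilde{\zeta}=\zeta\circ\delta_{c_{0}}^{\gamma}\in C_{c}^{\infty}(\Omega_{0})$, and since $c_{0}<1$ and every $\gamma_{i}>0$, the chain rule gives $\|\tilde{\zeta}\|_{C^{N_{0}}(\Omega_{0})}\le\|\zeta\|_{C^{N_{0}}(\Omega)}$. Applying the hypothesis to $J_{h+(\tilde{P}_{c_{0}}+\tilde{Q}_{c_{0}}),\tilde{\zeta}}(c_{0}\tau)$, using $1+|c_{0}\tau|\ge c_{0}(1+|\tau|)$ together with $\beta\le0$, and $\log^{p}(2+|c_{0}\tau|)\le\log^{p}(2+|\tau|)$, the rescaling identity yields
\begin{equation*}
\left|J_{h+P+Q,\zeta}(\tau)\right|\le c_{0}^{\,|\gamma|+\beta}C_{0}\,(1+|\tau|)^{\beta}\log^{p}(2+|\tau|)\,\|\zeta\|_{C^{N_{0}}(\Omega)},
\end{equation*}
so $M(h+P)\curlyeqprec(\beta,p)$ with $C:=c_{0}^{|\gamma|+\beta}C_{0}$. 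Since this works for every pair $(\beta,p)$ with $M(h)\curlyeqprec(\beta,p)$, we conclude $M(h+P)\curlyeqprec M(h)$.

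The only genuinely delicate step is the smallness of $\tilde{P}_{c}$: one needs the holomorphic extension of $P$ and the strict inequality $\gamma\cdot\alpha>1$ to run the dominated-convergence estimate uniformly on a complex ball; if one insists on the conclusion for \emph{all} sufficiently small $r$ rather than one, one simply shrinks $c_{0}$ further. Everything else is routine bookkeeping: the Jacobian factor $c_{0}^{|\gamma|}$, the time dilation $\tau\mapsto c_{0}\tau$, and the scaling of the $C^{N_{0}}$-norm are harmless precisely because $c_{0}<1$, $\beta\le0$ and the weights $\gamma_{i}$ are positive.
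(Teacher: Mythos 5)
Your argument is correct, and since the paper itself only cites \cite{11,12} for this lemma rather than giving a proof, you have in effect reconstructed the standard anisotropic-dilation argument that those sources use: rescale by $\delta_{c}^{\gamma}$ so that $h$ is invariant and both the higher-order part $c^{-1}P\circ\delta_{c}^{\gamma}$ and the holomorphic perturbation $c^{-1}Q\circ\delta_{c}^{\gamma}$ are absorbed into the perturbation class $\mathcal{H}_{r_{0}}(s_{0})$ for small enough $c$, then track the Jacobian $c^{|\gamma|}$, the time dilation $\tau\mapsto c\tau$, and the $C^{N_{0}}$-norm. All steps check out, including the dominated-convergence bound showing $\sup_{\overline{B}_{\mathbb{C}^{d}}(0,r_{0})}|c^{-1}P\circ\delta_{c}^{\gamma}|\to 0$ (which uses the strict inequality $\gamma\cdot\alpha>1$ and $r_{0}$ inside a polydisc of absolute convergence for $P$) and the inclusion $\delta_{c_{0}}^{\gamma}\bigl(\overline{B}_{\mathbb{C}^{d}}(0,r_{0})\bigr)\subseteq\overline{B}_{\mathbb{C}^{d}}(0,c_{0}^{\gamma_{\min}}r_{0})$.
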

		\begin{lemma}\label{Q}
			Let $m,n\ge 1$ and 
			\begin{equation*}
				h_{2}(\xi,y)=h_{1}(\xi)+Q(y), \forall \xi\in\mathbb{R}^{n}, y\in \mathbb{R}^{m},
			\end{equation*}
			where $Q(y)=\sum_{j=1}^{m}c_{j}y_{j}^{2}$, with $c_{j}=\pm1, j=1,\cdot,m$. Then  we have
			\begin{equation*}
				M(h_{2})\curlyeqprec M(h_{1})+(-\frac{m}{2},0).
			\end{equation*}
		\end{lemma}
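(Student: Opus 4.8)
The plan is to carry out stationary phase in the $y$--variables, treating $\xi$ as a parameter, so as to reduce the $(n+m)$--dimensional oscillatory integral to an $n$--dimensional one governed by $M(h_1)$. By the writing convention it suffices to assume $M(h_1)\curlyeqprec(\beta_1,p_1)$ and to prove $M(h_2)\curlyeqprec(\beta_1-\tfrac m2,p_1)$. So fix $r>0$ small; I will produce the required $s>0$, neighborhood $\Omega\subseteq B_{\mathbb R^{n+m}}(0,r)$ and $N\in\mathbb N$, taking $\Omega=\Omega_\xi\times\Omega_y$ a product of balls about the origin. For $\zeta\in C_c^\infty(\Omega)$ and $P\in\mathcal H_r(s)$ write
\begin{equation*}
J_{h_2+P,\zeta}(\tau)=\int_{\mathbb R^n}e^{i\tau h_1(\xi)}F(\xi,\tau)\,d\xi,\qquad F(\xi,\tau):=\int_{\mathbb R^m}e^{i\tau(Q(y)+P(\xi,y))}\zeta(\xi,y)\,dy.
\end{equation*}

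Next I would analyze $F(\xi,\tau)$. By Cauchy estimates, $\sup_{\overline{B}(0,3r/4)}|\partial^\gamma P|\lesssim_\gamma s$, so for $s$ small the $y$--phase $\Phi_\xi(y):=Q(y)+P(\xi,y)$ has Hessian uniformly close to $2\,\mathrm{diag}(c_1,\dots,c_m)$, hence is a nondegenerate small perturbation of a fixed quadratic form. The holomorphic implicit function theorem then gives a unique critical point $y^\ast(\xi)$ near the origin, holomorphic in $\xi$ on a complex ball of radius comparable to $r$, with $|y^\ast(\xi)|\lesssim s$; consequently the critical value $g(\xi):=\Phi_\xi(y^\ast(\xi))=Q(y^\ast(\xi))+P(\xi,y^\ast(\xi))$ is holomorphic there and satisfies $|g|\lesssim s$, i.e. $g\in\mathcal H_{r'}(Cs)$ for some $r'$ comparable to $r$ and an absolute constant $C$. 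Applying the classical stationary phase lemma with parameters (preceded, if necessary, by a harmless cutoff separating a neighborhood of $\{y=y^\ast(\xi)\}$, where one picks up the main term, from its complement, where nonstationary phase gives $O(\tau^{-\infty})$) yields, for $|\tau|\ge1$,
\begin{equation*}
F(\xi,\tau)=\tau^{-\frac m2}e^{i\tau g(\xi)}c(\xi,\tau),
\end{equation*}
where $c(\cdot,\tau)\in C_c^\infty(\Omega_\xi)$ and $\|c(\cdot,\tau)\|_{C^K(\Omega_\xi)}\lesssim_K\|\zeta\|_{C^{K+m+d_0}(\Omega)}$ uniformly in $\tau\in[1,\infty)$ and in $P\in\mathcal H_r(s)$, for an absolute constant $d_0$.

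Substituting back gives $J_{h_2+P,\zeta}(\tau)=\tau^{-m/2}J_{h_1+g,\,c(\cdot,\tau)}(\tau)$ for $|\tau|\ge1$. Invoking $M(h_1)\curlyeqprec(\beta_1,p_1)$ at the radius $r'$ furnishes $s_1>0$, a neighborhood $\Omega_1\subseteq B_{\mathbb R^n}(0,r')$ and $N_1\in\mathbb N$; I then choose $s$ small enough that $Cs\le s_1$ and that all the $y$--considerations above hold on $\Omega$, take $\Omega_\xi\subseteq\Omega_1$ (and $\Omega_y$ a sufficiently small ball about the origin), and set $N:=N_1+m+d_0$. Since $g$ is then an admissible perturbation of $h_1$ and $c(\cdot,\tau)\in C_c^\infty(\Omega_\xi)\subseteq C_c^\infty(\Omega_1)$, the defining estimate of $M(h_1)\curlyeqprec(\beta_1,p_1)$ applies:
\begin{equation*}
|J_{h_1+g,\,c(\cdot,\tau)}(\tau)|\le C(1+|\tau|)^{\beta_1}\log^{p_1}(2+|\tau|)\,\|c(\cdot,\tau)\|_{C^{N_1}(\Omega_\xi)}\lesssim(1+|\tau|)^{\beta_1}\log^{p_1}(2+|\tau|)\,\|\zeta\|_{C^N(\Omega)}.
\end{equation*}
Hence $|J_{h_2+P,\zeta}(\tau)|\lesssim(1+|\tau|)^{\beta_1-m/2}\log^{p_1}(2+|\tau|)\|\zeta\|_{C^N(\Omega)}$ for $|\tau|\ge1$, while for $|\tau|\le1$ one has the trivial bound $|J_{h_2+P,\zeta}(\tau)|\le|\Omega|\,\|\zeta\|_{C^0(\Omega)}$, which is of the same form since $(1+|\tau|)^{\beta_1-m/2}\log^{p_1}(2+|\tau|)$ is bounded below on $[0,1]$. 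This is exactly $M(h_2)\curlyeqprec(\beta_1-\tfrac m2,p_1)$, proving $M(h_2)\curlyeqprec M(h_1)+(-\tfrac m2,0)$.

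The step I expect to require the most care is the \emph{uniformity in $P$}: one must verify that the location, nondegeneracy and holomorphy of the critical point $y^\ast(\xi)$, the smallness and holomorphy of the critical value $g$, and the amplitude estimates on $c(\cdot,\tau)$ all depend on $P\in\mathcal H_r(s)$ only through the bound $\sup_{\overline{B}(0,r)}|P|\le s$ — which they do, via Cauchy estimates — so that one fixed triple $(s,\Omega,N)$ works simultaneously for every such $P$ and every $\tau$. Granting this, the remainder is simply the stationary phase lemma with parameters, specialized to the quadratic model phase $Q$.
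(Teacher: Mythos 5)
The paper does not prove Lemma~\ref{Q} at all: the body simply refers to \cite{11} and \cite{12} (Karpushkin's work on uniform estimates of oscillatory integrals). Your proposal supplies a self-contained proof, and it follows exactly the route one would expect those references to take: split the oscillatory integral as an iterated integral, perform stationary phase with parameters in the quadratic variables $y$, observe that the critical value $g(\xi)$ is a small holomorphic perturbation of $0$, and then feed the resulting amplitude into the hypothesis $M(h_1)\curlyeqprec(\beta_1,p_1)$. The crucial observation — that the critical point $y^\ast(\xi)$, the critical value $g$, and the amplitude $c(\cdot,\tau)$ depend on $P\in\mathcal H_r(s)$ only through the sup bound, via Cauchy estimates, so a single triple $(s,\Omega,N)$ works uniformly — is exactly what makes the uniform estimate go through, and you isolate this correctly. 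Two details deserve a bit more care if this were to be written out fully: (i) the identity $F(\xi,\tau)=\tau^{-m/2}e^{i\tau g(\xi)}c(\xi,\tau)$ with $c(\cdot,\tau)$ bounded in $C^K$ uniformly in $\tau\ge1$ is the stationary-phase-with-parameters theorem (e.g.\ H\"ormander's Theorem 7.7.6), and one must verify that the loss of derivatives is a fixed $m+d_0$ independent of $\tau$; this holds but is not free, since naively differentiating $F$ in $\xi$ brings down factors of $i\tau\partial_\xi P$, and the cancellation comes only after subtracting $e^{i\tau g}$; and (ii) one must confirm that $r'$ can be taken within the range where $M(h_1)\curlyeqprec(\beta_1,p_1)$ is quantified (``for $r$ sufficiently small''), which works because $r'\simeq r$ and $r$ is free to shrink. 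Neither is a gap — you flag the first — so the proposal is essentially correct and matches the expected argument from the cited sources.
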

		Next, we introduce the Newton polyhedron and related theorems, which will be used to derive the uniform estimate for some specific polynomials that appeared in the proof of Lemma \ref{g}. The concepts can be referred to \cite{13,14}
		
		Let $S:\mathbb{R}^{d}\to \mathbb{R}$ be real-analytic at $0$, satisfying
		\begin{equation}\label{a}
			S(0)=0,\quad \nabla S(0)=0.
		\end{equation} 
		Suppose the according Taylor sires at $0$ is 
		\begin{equation*}
			S(\xi)=\sum_{\gamma\in\mathbb{N}^{d}}s_{\gamma}\xi^{\gamma}.
		\end{equation*}
		We also define the Taylor support set $\supp S:=\left\lbrace \gamma\in\mathbb{N}^{d}\Big|s_{\gamma}\ne 0 \right\rbrace$.
		\begin{defi}
			The Newton polyhedron $\mathcal{N}(S)$ of such $S$, is the convex hull of 
			\begin{equation*}
				\bigcup_{\gamma\in \supp(S)}\left(\gamma+\mathbb{R}_{+}^{d}\right), \quad  \mathbb{R}_{+}^{d}:=\left\lbrace \xi\in\mathbb{R}^{d}\Big|\xi_{i}>0, i=1,\cdots,d\right\rbrace.
			\end{equation*}
			If $\mathcal{P}$ is a face of Newton polyhedron $\mathcal{N}(S)$, then we denote $S_{\mathcal{P}}(\xi):=\sum_{\gamma\in \mathcal{P}}s_{\gamma}\xi^{\gamma}$.
		\end{defi}
		\begin{defi}
			S is called $\mathbb{R}-$nondegenerate if for any compact face $\mathcal{P}$,
			\begin{equation*}
				\bigcap_{i=1}^{d}\left\lbrace\xi\in\mathbb{R}^{d}\Big|\partial_{i}S_{\mathcal{P}}(\xi)=0\right\rbrace \subseteq\bigcup_{i=1}^{d}\left\lbrace\xi\in\mathbb{R}^{d}\Big|\xi_{i}=0\right\rbrace ,
			\end{equation*}
			i.e. $\nabla S_{\mathcal{P}}$ is non-vanishing on $(\mathbb{R}-\lbrace0\rbrace)^{d}$.
		\end{defi}
		\begin{defi}
			If $\supp(S)\ne \emptyset$, then the Newton distance $d_{S}$ is defined as
			\begin{equation*}
				d_{S}:=\inf\left\lbrace d>0\Big| (d,\cdots,d)\in \mathcal{N}(S)\right\rbrace.
			\end{equation*}
			The principal face $\pi_{S}$ is the face of minimal dimension that intersects with $\lbrace \xi_{1}=\cdots=\xi_{d}\rbrace$.
		     We also denote $S_{\pi}:=S_{\pi_{S}}$, $k_{S}:=d-\dim_{\mathbb{R}^{d}}(\pi_{S})$, where $\dim_{\mathbb{R}^{d}}(\cdot)$ means affine dimension. 
	\end{defi}
		\begin{defi}
			The height of $S$ is defined as follows
			\begin{equation*}
				h_{S}:=\sup\lbrace d_{S,\xi}\rbrace,
			\end{equation*}
			where the supremum is taken over all local analytic coordinate system $\xi$, preserving the $0$, and $d_{S,\xi}$ is the according Newton distance. 
			
			A coordinate system $\xi_{\ast}$ is called adapted, if $d_{S,\xi^{\ast}}=h_{s}$.
		\end{defi}
		\vspace{7.5pt}
		To recognize if a coordinate system is adapted, there is a very useful theorem for $d=2$.
		\begin{thm}\label{superadapted}
			Let $d=2$, if the principal face $\pi_{S}$ is compact and both the functions $S_{\pi}(1,y)$ and $S_{\pi}(-1,y)$ have no real root of order $\ge d_{S}$ other than possibly $y=0$, then the coordinate system is adapted. 
		\end{thm}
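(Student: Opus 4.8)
The plan is to follow Varchenko's criterion for adapted coordinates, in the explicit two--dimensional form due to Ikromov and M\"{u}ller, organizing the proof around an analysis of Puiseux shear coordinate changes. Write $S=S(x,y)$. If the principal face $\pi_{S}$ is a single vertex the claim is immediate: then $S_{\pi}$ is a single monomial, $S_{\pi}(\pm1,y)$ vanishes only at $y=0$, and the bisectrix $\{t_{1}=t_{2}\}$ meeting $\mathcal{N}(S)$ at a vertex already forces the coordinates to be adapted. So assume $\pi_{S}$ is a compact edge, lying on a weight line $\kappa_{1}t_{1}+\kappa_{2}t_{2}=1$ with $\kappa_{1},\kappa_{2}>0$; since $\pi_{S}$ meets the bisectrix, $(d_{S},d_{S})\in\pi_{S}$, so $d_{S}=(\kappa_{1}+\kappa_{2})^{-1}$ and $S_{\pi}$ is $\kappa$--homogeneous of degree $1$ (thus $S_{\pi}\in\mathcal{E}_{\kappa,2}$ and $S-S_{\pi}\in H_{\kappa,2}$). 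Relabelling the variables if necessary, assume $\kappa_{1}\le\kappa_{2}$, so $x$ is the ``flat'' variable. The first step, which is where $d=2$ is genuinely used and which rests on the Newton--Puiseux theorem, is that the supremum defining $h_{S}$ is already attained among coordinate changes $(x,y)\mapsto(x,\,y-\psi(x))$ with $\psi$ a real convergent Puiseux series vanishing at the origin; transposed changes may be ignored since $\kappa_{1}\le\kappa_{2}$, and only a $\psi$ with leading term $c\,x^{\kappa_{2}/\kappa_{1}}$ (note $\kappa_{2}/\kappa_{1}\ge1$) can affect the part of $\mathcal{N}(S)$ controlling the distance along the bisectrix.

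The heart of the argument is then the computation of how the Newton distance of $S$ relative to the sheared coordinates $(x,\,y-\psi(x))$ depends on $\psi$. This distance can exceed $d_{S}$ only when the curve $y=\psi(x)$ has high order of contact with a real branch of $\{S=0\}$ through the origin, and to leading order such a branch has $y\sim c\,x^{\kappa_{2}/\kappa_{1}}$ with $c$ a real zero of $S_{\pi}(1,\cdot)$ (for $x>0$) or of $S_{\pi}(-1,\cdot)$ (for $x<0$). Expanding $S(x,\,y+c\,x^{\kappa_{2}/\kappa_{1}})$: since the curve $y=c\,x^{\kappa_{2}/\kappa_{1}}$ is $\kappa$--balanced the $\kappa$--weight filtration is preserved, so the weight--one part is $S_{\pi}(x,\,y+c\,x^{\kappa_{2}/\kappa_{1}})$, a polynomial carrying a factor $y^{m}$ precisely when $c$ is a zero of multiplicity $m$, with leading behaviour $y^{m}x^{(1-\kappa_{2}m)/\kappa_{1}}$ as $x\to0^{+}$. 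A comparison of Newton polyhedra then shows that this substitution raises the distance above $d_{S}$ only if $m>d_{S}$, and that in that case it can be raised as far as $m$; the remaining Puiseux terms of $\psi$ resolve that branch further and are handled by iterating this analysis, but the same threshold $d_{S}$ reappears at each stage.

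To conclude: by hypothesis every real zero of $S_{\pi}(1,\cdot)$ and $S_{\pi}(-1,\cdot)$ other than $y=0$ has multiplicity strictly less than $d_{S}$, while a zero at $y=0$ reflects only a monomial factor $y^{b}$ of $S_{\pi}$ and is not seen by the shear construction --- which is exactly why it is excluded from the hypothesis. Hence no Puiseux shear raises the Newton distance above $d_{S}$, and combined with the first step this forces $h_{S}=d_{S}$, i.e.\ the given coordinate system is adapted.

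The one genuinely substantial point is the second paragraph: the precise description of the transformed Newton polyhedron after a possibly fractional Puiseux shift, the verification that the relevant multiplicity threshold is exactly $d_{S}$ (including the borderline value), and the justification of the Newton--Puiseux reduction of the first paragraph. This is the technical core of Varchenko's criterion; since the statement is used below only as a black box inside the Newton--polyhedron lemmas feeding the proof of Lemma~\ref{g}, in the final version I would either carry this computation out in full or refer the reader to \cite{13,14} for the complete details.
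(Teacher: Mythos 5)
The paper disposes of this theorem by citing Ikromov--M\"uller and Varchenko (\cite{15,16}), and your sketch is a faithful outline of exactly the argument those references carry out: reduction to Puiseux shears in the flat variable, the resulting identity $h_S=\max(d_S,m(S_\pi))$ when the principal face is a compact edge, and the triviality of the vertex case. You correctly flag the second paragraph as the technical core that would need to be carried out in full or cited, so your approach is the same as the paper's, just with more of the argument exposed.
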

		\begin{proof}
		   See e.g. \cite{15} or \cite{16}.
		\end{proof}
		It's well-known that the oscillatory integral $J_{S,\zeta}(\tau)$ has the asymptotic expansion
		\begin{equation*}
			J_{S,\zeta}(\tau)\approx \sum_{\beta}\sum_{\rho=1}^{d-1}c_{\beta,\rho,\zeta}\tau^{\beta}\log^{\rho}(\tau),
		\end{equation*}
		where $\beta$ runs through finitely many arithmetic progressions of negative rational numbers. More details about this asymptotic expansion can be found in \cite{14,18}. Let $(\beta_{S},\rho_{S})$ be the maximum over all pair $(\beta,\rho)$, s.t  for any neighborhood $\Omega$ of the $0$, there exists $\zeta\in C_{c}^{\infty}(\Omega)$ and according $c_{\beta_{S},\rho_{S},\zeta}\ne 0$. And we usually call such $\beta_{S}$ as the oscillatory index of $S$ and $\rho_{S}$ as the multiplicity of $S$.
		
		Then we have following theorem for $d=2$, that connects adapted coordinate system and $(\beta_{S},\rho_{S})$.
		\begin{thm}\label{uniform estimate}
			If $S$ satisfies (\ref{a}), then there exists coordinate system that is adapted to $S$. Furthermore, we have 
			\begin{equation*}
				M(S)\curlyeqprec(\beta_{S},\rho_{S}), \; \beta_{S}=-\frac{1}{h_{S}}.
			\end{equation*}
		\end{thm}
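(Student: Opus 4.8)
The plan is to prove the two assertions in turn: first that in dimension two a coordinate system adapted to $S$ always exists, and then that in such coordinates the oscillatory integral obeys $M(S)\curlyeqprec(\beta_S,\rho_S)$ with $\beta_S=-1/h_S$, uniformly under the holomorphic perturbations $P\in\mathcal H_r(s)$. The first part is Varchenko's algorithm; the second is a resolution of singularities made stable under perturbation, in the spirit of Phong--Stein--Sturm and Ikromov--M\"uller, and I would follow their arguments rather than reinvent them.

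For the existence of adapted coordinates, assume $S\not\equiv0$. If the given coordinates are not adapted then, by Theorem \ref{superadapted}, either the principal face $\pi_S$ is non-compact or one of $S_\pi(1,y),S_\pi(-1,y)$ has a real root $y=b\neq0$ of order $\ge d_S$; in either case a real branch of a Weierstrass factor of $S$ carrying the principal face has a Puiseux expansion $\xi_2=\varphi(\xi_1)$, and truncating $\varphi$ to a polynomial $\psi$ and performing $\xi_2\mapsto\xi_2-\psi(\xi_1)$ strictly increases the Newton distance $d_{S,\xi}$. Since $h_S=\sup_\xi d_{S,\xi}<\infty$, this terminates after finitely many steps at a coordinate system $\xi^\ast$ with $d_{S,\xi^\ast}=h_S$. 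This is exactly Varchenko's two-variable theorem, which I would cite from \cite{13,14,15,16}.

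For the estimate, work in adapted coordinates and let $\kappa=(\kappa_1,\kappa_2)$, $\kappa_i>0$, be the weight whose supporting line carries $\pi_S$, so $d_S=h_S=1/(\kappa_1+\kappa_2)$ and $S_\pi\in\mathcal E_{\kappa,2}$. I would decompose a small neighbourhood of $0$ into the horn region around the principal face, $\{c|\xi_1|^{\kappa_1/\kappa_2}\le|\xi_2|\le C|\xi_1|^{\kappa_1/\kappa_2}\}$; finitely many horn regions around the remaining compact faces of $\mathcal N(S)$; the two strips along the axes; and the region where $\nabla S\neq0$, which contributes $O((1+|\tau|)^{-n})$ for all $n$ by the non-stationary phase lemma preceding Lemma \ref{E}. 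On the principal horn, dyadically split $|\xi_1|\sim2^{-j}$ and apply the dilation $\delta^{\kappa}_{2^{-j/\kappa_1}}$: the rescaled phase tends to $S_\pi$, the effective oscillation parameter is $\sim\tau\,2^{-j/\kappa_1}$ and the Jacobian is $\sim2^{-j(\kappa_1+\kappa_2)/\kappa_1}$, and since the coordinates are adapted, $S_\pi(\pm1,\cdot)$ vanishes only to order $<d_S$ on the compact $0$-avoiding $z_2$-range, so van der Corput in $z_2$ (at the order set by those multiplicities) bounds each slice in terms of finitely many derivatives of $\zeta$. Summing the resulting geometric-type series over the $\sim\log(2+|\tau|)$ relevant scales gives precisely $(1+|\tau|)^{-1/h_S}\log^{\rho_S}(2+|\tau|)$, the power $\rho_S$ arising from the critical matching of slice decay and scale count. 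The other faces and the axis strips are handled by the same rescaling with the corresponding weights, and adaptedness forces strictly faster power decay (or a strictly lower log power) there, so these pieces are dominated; combining everything yields $M(S)\curlyeqprec(\beta_S,\rho_S)$ for $S$ itself.

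The last and most delicate step is uniformity over $P\in\mathcal H_r(s)$. I would write $P=P(0)+L+\widetilde P$ with $L$ linear and $\widetilde P$ vanishing to second order: $P(0)$ only contributes the unimodular factor $e^{i\tau P(0)}$, and by the Cauchy estimates every coefficient of $L$ and $\widetilde P$ is $O(s/r^{|\gamma|})$, hence as small as desired once $s\le s(S,r)$. The crucial observation is that all dilations above are subcritical for such a perturbation: after $\delta^{\kappa}_{2^{-j/\kappa_1}}$ (or its analogue on a lower face) the rescaled perturbation remains $O(s)$ in $C^N$ on the rescaled piece, uniformly in $j$, so it neither spoils the non-vanishing of $\nabla S_\pi$ off the axes nor lowers the order of the derivative estimate used in van der Corput, while $L$ leaves all second derivatives untouched. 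Hence every slice estimate survives with the same exponents, $N$ can be taken as the finite maximum over the finitely many pieces, and the constants are independent of $P$; this is the Phong--Stein--Sturm stability mechanism. The hard part will be exactly this uniform bookkeeping across infinitely many dyadic scales --- in particular, ruling out that $\widetilde P$ creates a monomial on or below the principal face that would alter the Newton distance, which is where the smallness $|P|<s$, and not merely the analyticity of $P$, is used. Steps one and two are, by contrast, now-classical two-dimensional theory.
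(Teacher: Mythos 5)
The paper does not prove this statement at all: its ``proof'' is the single line ``See e.g.\ \cite{17}'', which points (almost certainly) to Karpushkin's uniform-estimate theorem in two variables. So there is no argument in the paper to compare against, and the relevant question is whether your sketch would actually go through.

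Your first part (existence of an adapted system via Varchenko's root-jet shearing) is correct in outline. One nit: the termination argument ``the Newton distance strictly increases and is bounded by $h_S$, hence finitely many steps'' is incomplete as stated, since a strictly increasing bounded sequence need not be finite; the actual termination uses that each shear raises $d_{S,\xi}$ by an amount tied to the multiplicity of the eliminated root, which is bounded below. This is in the cited literature and not a serious problem.

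The second part has a genuine gap, and it is precisely in the step you flag as ``the crucial observation'': the claim that the dilations $\delta^{\kappa}_{2^{-j/\kappa_1}}$ are subcritical for $\widetilde P$ is false in general. With the weight $\kappa$ normalized so that the principal face lies on $\{\kappa\cdot\gamma=1\}$, one has $\kappa_1+\kappa_2=1/d_S$; thus for $d_S>2$, a quadratic monomial $\xi^{\gamma}$ with $|\gamma|=2$ can satisfy $\kappa\cdot\gamma<1$, i.e.\ lie strictly below the principal face (e.g.\ $S=\xi_1^2\xi_2^2$, $\kappa=(\tfrac14,\tfrac14)$, $\gamma=(2,0)$). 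Under your rescaling the normalized perturbation contributes $s\,2^{\,j(1-\kappa\cdot\gamma)}z^{\gamma}$, which \emph{blows up} in $j$, so it does spoil the derivative lower bounds used in van der Corput at small scales. Smallness $|P|<s$ cannot help here: it controls the size of the Taylor coefficients, not their support, so it does not forbid $\widetilde P$ from placing monomials below the principal face — your last paragraph asserts that it does, which is the unsupported step. What is actually true is that such a perturbation lowers the Newton distance of $S+P$ and hence \emph{improves} the decay exponent; the difficulty is that the implied constant of that improved estimate blows up as $s\to0$, and the uniform $\tau^{-1/h_S}$ bound must be recovered by matching the $S$-dominant dyadic range $\tau\lesssim s^{-c}$ against the $P$-dominant range $\tau\gtrsim s^{-c}$. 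This crossover analysis (Karpushkin does it by a complex-analytic/Weierstrass-preparation induction on the Newton polyhedron; Ikromov--M\"uller and Phong--Stein--Sturm by refined real-variable decompositions) is the whole content of the theorem and is absent from your sketch. Everything else in your outline is the standard scaffolding, but without this piece the argument does not close.
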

		\begin{proof}
			See e.g. \cite{17}.
		\end{proof}
		Now we can use Newton polyhedron and the above theorems to derive uniform estimate of some oscillatory integrals that have appeared in the proof of Lemma \ref{3.4}.
		\begin{lemma}\label{-2/3}
			For $k\in \mathbb{N}$, $\xi=(\xi_{1},\cdots,\xi_{d})$, we have
			\begin{equation*}
				M(\xi_{1}^{k})\curlyeqprec(-k,0), \quad M(\xi_{1}^{2}\xi_{2}\pm\xi_{1}\xi_{2}^{2})\curlyeqprec(-\frac{2}{3},0),
			\end{equation*}
			\begin{equation*}
				M(\xi_{1}^{2}\xi_{2}\pm\xi_{2}^{2})\curlyeqprec(-\frac{3}{4},0), \quad M(\xi_{1}^{2}\xi_{2}\pm\xi_{2}^{2}\pm\xi_{1}^{4})\curlyeqprec(-\frac{3}{4},0).
			\end{equation*}
		\end{lemma}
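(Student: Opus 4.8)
The plan is to treat the four families of phases in turn, each time beginning by stripping off the variables that do not appear in the phase. If a real-analytic $h$ depends only on $(\xi_1,\dots,\xi_n)$ with $n<d$, I would first check the reduction $M_{\mathbb{R}^d}(h)\curlyeqprec M_{\mathbb{R}^n}(h)$: slicing $J_{h+P,\zeta}(\tau)$ in the inert variables $\xi''$, each slice is an $\mathbb{R}^n$-oscillatory integral with phase $h(\xi')+P(\,\cdot\,,\xi'')$, where $P(\,\cdot\,,\xi'')$ still lies in $\mathcal{H}_r(s)$, and amplitude $\zeta(\,\cdot\,,\xi'')$ of $C^N$-norm at most $\|\zeta\|_{C^N}$, so the $\mathbb{R}^n$ estimate applies slice by slice, and integrating the bound over the compact $\xi''$-support costs only a constant. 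It then suffices to prove $M(\xi_1^k)\curlyeqprec(-\tfrac1k,0)$ on $\mathbb{R}^1$ and the remaining three estimates on $\mathbb{R}^2$. The monomial case is van der Corput: for $P\in\mathcal{H}_r(s)$, Cauchy's estimates give $\|P^{(k)}\|_{L^\infty(\overline{B}(0,r/2))}\lesssim s\,r^{-k}$, so for $s$ small the $k$-th derivative of $\xi_1^k+P$ stays $\ge k!/2$ near $0$; van der Corput's lemma plus one integration by parts to handle the cutoff then gives $|J_{\xi_1^k+P,\zeta}(\tau)|\lesssim(1+|\tau|)^{-1/k}\|\zeta\|_{C^1}$ for $k\ge2$, while for $k=1$ the non-stationary phase lemma already recorded applies.

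For each of the three two-variable phases $S$ I would run the Newton-polyhedron machinery of Theorems \ref{superadapted} and \ref{uniform estimate}. Each satisfies $S(0)=0$, $\nabla S(0)=0$, so Theorem \ref{uniform estimate} gives $M(S)\curlyeqprec(-1/h_S,\rho_S)$, and everything reduces to the height $h_S$ and the multiplicity $\rho_S$. I expect the ambient coordinates to be adapted, so that $h_S$ is just the Newton distance $d_S$; to confirm this I would apply the criterion of Theorem \ref{superadapted} — the principal face is a compact edge in each case and equals all of $\operatorname{supp}S$, and one checks that $S_\pi(1,\cdot)$ and $S_\pi(-1,\cdot)$ have no real zero of order $\ge d_S$ apart from possibly the origin. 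For $S=\xi_1^2\xi_2\pm\xi_1\xi_2^2$ the support $\{(2,1),(1,2)\}$ lies on $\xi_1+\xi_2=3$, so $d_S=\tfrac32$, and $S_\pi(\pm1,y)$ is $y$ times a linear factor, with only simple zeros. For $S=\xi_1^2\xi_2\pm\xi_2^2$ the support $\{(2,1),(0,2)\}$ lies on $\xi_1+2\xi_2=4$, so $d_S=\tfrac43$, and again $S_\pi(\pm1,y)=y(1\pm y)$. For $S=\xi_1^2\xi_2\pm\xi_2^2\pm\xi_1^4$ the support $\{(2,1),(0,2),(4,0)\}$ is collinear on $\xi_1+2\xi_2=4$ (so $(2,1)$ is interior to the face), giving $d_S=\tfrac43$, while $S_\pi(\pm1,y)=\pm y^2+y\pm1$ has discriminant $1\mp4\ne0$ and constant term $\pm1\ne0$, hence no multiple root and no root at the origin, for all four sign combinations. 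Thus $h_S=d_S$ throughout, and $-1/h_S$ equals $-\tfrac23$ in the first family and $-\tfrac34$ in the other two.

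It remains to see that $\rho_S=0$, i.e.\ that no logarithmic factor appears; this I would read off from the fact that in each case the bisectrix $\{\xi_1=\xi_2\}$ meets the Newton polyhedron in the relative interior of a compact edge rather than at a vertex, so the leading term of the asymptotic expansion of $J_{S,\zeta}$ carries no logarithm (cf.\ the discussion of this expansion in \cite{14,18}). Together with the heights above and Theorem \ref{uniform estimate} this finishes the three two-dimensional estimates, and by rescaling $\xi_1,\xi_2$ they persist with arbitrary nonzero coefficients, which is the form in which Lemma \ref{-2/3} is used in the proof of Lemma \ref{g}. I expect the main obstacle to be exactly the verification of adaptedness uniformly over all admissible signs — above all the discriminant computation for the last family — together with the argument that the multiplicity $\rho_S$ vanishes; once the Newton diagrams are drawn the rest is bookkeeping.
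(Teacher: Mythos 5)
Your approach coincides with the paper's: van der Corput for the monomial and, for the three two-variable phases, the same Newton polyhedra, principal faces, $S_\pi$, and Newton distances $d_S=\tfrac32,\tfrac43,\tfrac43$, fed into Theorem \ref{superadapted} and Theorem \ref{uniform estimate}. You are somewhat more careful than the paper — you verify the adaptedness hypothesis, argue $\rho_S=0$ from the bisectrix meeting the relative interior of a compact edge, note the rescaling to general nonzero coefficients, and implicitly correct the typo $(-k,0)$ to $(-\tfrac1k,0)$ — but the underlying argument is the same.
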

		\begin{proof}
			Using Van der Corput lemma (see e.g. \cite{19}), we can prove the first statement. For the second statement, we have
			\begin{equation*}
				\mathcal{N}(S)=\mathcal{N}(\xi_{1}^{2}\xi_{2}\pm\xi_{1}\xi_{2}^{2})=\left\lbrace(x,y)\in \mathbb{R}_{+}^{2}\Big|x+y>3, x>1, y>1\right\rbrace,
			\end{equation*}
			\begin{equation*}
				\pi=\left\lbrace(x,y)\in \mathbb{R}_{+}^{2}\Big|x+y=3, x\in[1,2]\right\rbrace,\; S_{\pi}=x^{2}y\pm xy^{2},\; d_{S}=\dfrac{3}{2}.
			\end{equation*}
			For the third statement, we have
			\begin{equation*}
			    \mathcal{N}(S)=\mathcal{N}(\xi_{1}^{2}\xi_{2}\pm\xi_{2}^{2})=\left\lbrace(x,y)\in \mathbb{R}_{+}^{2}\Big|x+2y>4, y>1\right\rbrace,
			\end{equation*}
				\begin{equation*}
				\pi=\left\lbrace(x,y)\in \mathbb{R}_{+}^{2}\Big|x+2y=4, x\in[0,2]\right\rbrace,\; S_{\pi}=x^{2}y\pm y^{2},\; d_{S}=\dfrac{4}{3}.
			\end{equation*}
			For the fourth statement, we have 
			\begin{equation*}
				\mathcal{N}(S)=\mathcal{N}(\xi_{1}^{2}\xi_{2}\pm\xi_{2}^{2}\pm \xi_{1}^{4})=\left\lbrace(x,y)\in \mathbb{R}_{+}^{2}\Big|x+2y>4\right\rbrace,
			\end{equation*}
			\begin{equation*}
				\pi=\left\lbrace(x,y)\in \mathbb{R}_{+}^{2}\Big|x+2y=4, x\in[0,4]\right\rbrace,\; S_{\pi}=x^{2}y\pm y^{2}\pm x^{4},\; d_{S}=\dfrac{4}{3}.
			\end{equation*}
			Based on Theorem \ref{superadapted} and Theorem \ref{uniform estimate}, we derive the  statements.
			
		\end{proof}

		\newpage
		\section*{Acknowledgement}
		The author is supported by NSFC, No. 123B1035 and is grateful to Prof. B. Choi and Prof. B. Hua for helpful discussions.
		
		\section*{Conflict of interest statement}
		The author does not have any possible conflict of interest.
		
		\section*{Data availability statement}
		The manuscript has no associated data.
		\bigskip
		\bigskip

		\bibliographystyle{alpha}
		\bibliography{fractional}
		
	\end{document}